\numberwithin{equation}{section}
\newtheorem{theorem}{Theorem}[section]
\newtheorem{definition}[theorem]{Definition}
\newtheorem{proposition}[theorem]{Proposition}
\newtheorem{corollary}[theorem]{Corollary}
\newtheorem{lemma}[theorem]{Lemma}
\newtheorem{remark}[theorem]{Remark}
\newcommand{\cali}[1]{\mathscr{#1}}
\newcommand{\Leb}{{\rm Leb}}
\newcommand{\dist}{{\rm dist}}
\newcommand{\ddbar}{{\partial\overline\partial}}
\newcommand{\Cell}{{\rm Cell}}
\newcommand{\id}{{\rm id}}
\newcommand{\hol}{{\rm hol}}
\newcommand{\lof}{\mathop{\mathrm{{log^\star}}}\nolimits}
\renewcommand{\Re}{{\rm Re}}
\newcommand{\Ac}{\cali{A}}
\newcommand{\Cc}{\cali{C}}
\newcommand{\Fc}{\cali{F}}
\newcommand{\Lc}{\cali{L}}
\newcommand{\Uc}{\cali{U}}
\newcommand{\Sc}{\cali{S}}
\newcommand{\Nc}{\cali{N}}
\newcommand{\Ic}{\cali{I}}
\newcommand{\A}{\mathbb{A}}
\newcommand{\B}{\mathbb{B}}
\newcommand{\C}{\mathbb{C}}
\newcommand{\D}{\mathbb{D}}
\newcommand{\X}{\mathbb{X}}
\newcommand{\N}{\mathbb{N}}
\newcommand{\Z}{\mathbb{Z}}
\newcommand{\R}{\mathbb{R}}
\newcommand{\T}{\mathbb{T}}
\newcommand{\U}{\mathbb{U}}
\newcommand{\V}{\mathbb{V}}
\renewcommand{\P}{\mathbb{P}}
\newcommand{\Cf}{\mathfrak{C}}
\newcommand{\Df}{\mathfrak{D}}
\title[Integrability of holonomy cocycle]{Singular holomorphic  foliations by curves  I: Integrability of holonomy cocycle in dimension $2$}
\author{ Vi{\^e}t-Anh Nguy{\^e}n}
\dedicatory{Dedicated to  Professor  Nessim Sibony for his  seventieth birthday}
\begin{document}

\maketitle

\begin{abstract}
We study  the  holonomy  cocycle $\mathcal H$  of  a  holomorphic  foliation $\Fc$ by Riemann surfaces defined on a   compact complex projective surface $X$ satisfying the following  two conditions:

$\bullet$   its   singularities $E$ are all hyperbolic;

$\bullet$   there is  no holomorphic non-constant map $\C\to X$ such that out of $E$
the image of $\C$ is locally contained in a leaf.

Let $T$ be a harmonic current tangent to $\Fc$ which does not give mass to any  invariant  analytic curve. 
Using the   leafwise  Poincar\'e metric,
we show that   $\mathcal H$  is  integrable with respect to  $T.$  
Consequently,  we infer the  existence of the Lyapunov  exponent function of $T.$
  \end{abstract}

\bigskip

\bigskip

\noindent
{\bf Classification AMS 2010:} Primary: 37F75, 37A30;  Secondary:  57R30, 58J35, 58J65, 60J65.

\noindent
{\bf Keywords:} holomorphic  foliation,  hyperbolic singularity, Poincar{\'e} metric,  harmonic current,  holonomy cocycle, Lyapunov exponent.

\tableofcontents


 \section{Introduction} \label{intro}

\subsection{General settings and  main results}

The  dynamical and  geometric theory  of holomorphic  foliations by curves has
received much attention in the past few years.  The  holonomy cocycle (or equivalently, the normal derivative cocycle)  of a foliation  
is  a very important object  which  reflects
 dynamical  as well as  geometric and analytic aspects    of  the  foliation.
 Exploring  this  object  allows  us  to understand  more  about the  foliation itself.
Let $\Fc=(X,\Lc,E)$ be  a     holomorphic  foliation by hyperbolic Riemann surfaces    which is  immersed   onto an ambient complex manifold $X$  and  which  possesses    the set of singularities $E.$ 
On the  geometric  side,   we have  {\it harmonic   currents} $T$ which are  generalizations of the  {\it foliations cycles}
introduced by  Sullivan  \cite{Sullivan}. On the  dynamical  side, the  sample-path space $\Omega$ associated  to the foliation  describes the {\it leafwise Brownian motion} with respect to the Poincar\'e  metric  
on   leaves.  This  motion  generates a Markov process on  $X.$

Assume  for the  moment that $\Fc$   does not possess any singularities  (i.e. $E=\varnothing$). Let $T$ be  
a  harmonic  current  tangent to  $\Fc.$
When  $X$ is  a surface, i.e.  $\dim X=2,$    we can define  the unique 
{\it Lyapunov  exponent function}  of  $T,$ which is  leafwise constant and which   measures  heuristically   the exponential  rate of  convergence  of leaves toward each other  along  leafwise Brownian trajectories
 (see Candel  \cite{Candel},  Deroin \cite{Deroin05}). When $\dim X\geq 2,$    our recent work  in \cite{NguyenVietAnh1}
provides $(\dim X -1)$ Lyapunov exponent functions whose  geometric  characterizations in terms of geodesic
rays have been investigated in  \cite{NguyenVietAnh2}.

Since  the main examples of holomorphic foliations by curves 
are  those  in the complex projective space $\P^k$ of arbitrary
dimension (in which case there are always singularities) or in algebraic manifolds,
  the following fundamental  question  arises  naturally: 

\smallskip

\noindent {\bf Question. }{\it Can one  define  the  Lyapunov exponent functions of 
a harmonic  current $T$ tangent to a  singular holomorphic hyperbolic  foliation $\Fc=(X,\Lc,E)$ ?}

\smallskip

The main purpose of  this  paper is to give an affirmative  answer  to this  question for generic  foliations, that is,   when 
the   ambient manifold $X$  is  a compact complex projective  surface, the foliation  enjoys  Brody hyperbolicity  (see Definition \ref{D:Brody} below),
and  $E$  is the set of singularities  which  are of hyperbolic type.

 Here is  our   main result. The   new  terminology and  notation appearing in this  theorem   will be  explained  in Section
  \ref{S:background} below.
 
\begin{theorem}\label{thm_main}  
Let $\Fc=(X,\Lc,E)$    be  
a   holomorphic Brody hyperbolic foliation   with     hyperbolic  singularities $E$ in a   Hermitian compact complex projective  surface $X.$
Let $\mathcal H$ be the holonomy cocycle of the foliation. Let $T$ be  a harmonic  current  tangent  to $\Fc$  which does not give mass to any  invariant  analytic curve.
Consider   the  corresponding harmonic  measure
 $\mu:= T\wedge  g_P$  where $g_P$ is the leafwise  Poincar\'e metric. 
Let $\Omega$  be the sample-path space associated with $\Fc.$ 
 Let $\bar\mu$ be the   invariant   measure on  $\Omega$  associated  with $\mu.$  Consider the function $\Ic:\ \Omega\to \R^+$ defined by
$$ \Ic(\omega):=\sup_{t\in[0,1]}
     |\log \|\mathcal{H}(\omega,t)\| |  ,\qquad \omega\in \Omega.
$$
 Then
$\Ic$ is $\bar\mu$-integrable.
\end{theorem}
Here   is an immediate  consequence of this theorem.

\begin{corollary}\label{C:extremal}
 Under the hypotheses and notation of Theorem \ref{thm_main}, assume in addition that
the measure $\mu$ is  ergodic.    Then  $T$ admits
the (unique) Lyapunov exponent $\lambda(T)$  given by the  formula
$$
\lambda(T):= \int_\Omega \log \|\mathcal{H}(\omega,1)\| d\bar\mu(\omega).  
$$
Moreover,
  for  $\mu$-almost  every $x\in X,$  we have
 $$
 \lim\limits_{t\to \infty} {1\over  t} \log  \| \mathcal H(\omega,t) \|=\lambda(T)  
 $$
  for    almost every path  $\omega\in\Omega$ with respect to the Wiener measure  at $x$
  which lives on the leaf passing through $x.$
\end{corollary}
For comprehensive  expositions on characterization of Lyapunov exponents using the Wiener measures,
see \cite{Candel, Deroin13,NguyenVietAnh1,NguyenVietAnh2}.  
 In Theorem  \ref{thm_harmonic_currents_vs_measures} below,  we will see that the  measure $\mu$ is  ergodic
when, for  example, the current $T$ is an  extremal point in the convex cone of all harmonic currents tangent to $\Fc.$

%

 Consider     
a   singular  foliation by curves  $\Fc=(\P^2,\Lc,E)$ on the complex projective plane $\P^2$
 such that 
all the singularities of $\Fc$   are hyperbolic and  that  $\Fc$ has no invariant algebraic  curve.
 Combining some results by   Glutsyuk \cite{Glutsyuk} and by Lins Neto  \cite{Neto},
and   by Brunella  \cite{Brunella}, we will see in Remark  \ref{R:Brody_sufficiency}  and in the  discussion after Theorem \ref{T:NetoSoares} below
that 
 $\Fc$ is   Brody  hyperbolic.  Moreover,
 the  unique  ergodicity theorem of Forn\ae ss-Sibony  \cite{FornaessSibony3} says that the harmonic current $T$ is unique
up to a multiplicative constant. In particular, the convex cone of all harmonic  currents of $\Fc$ is just  a real half-line,
and hence all these  currents  are extremal
(see the  discussion preceding Theorem \ref{thm_harmonic_currents_vs_measures} below). Therefore,   the measure $T\wedge g_P$ is ergodic  by Part 2) of  this theorem.
Consequently, Corollary  \ref{C:extremal} applies and   gives us the following result.
It can be   applied  to  every generic  foliation in $\P^2$ with a given degree $d>1.$
  
\begin{corollary}\label{cor_th_main}
 Let $\Fc=(\P^2,\Lc,E)$    be  
a   singular  foliation by curves on the complex projective plane $\P^2.$ Assume that
all the singularities  are hyperbolic and  that  $\Fc$ has no invariant algebraic  curve. 
Let $T$ be  the unique  harmonic   current  tangent to $\Fc$  such that  $\mu:=T\wedge g_P$ is a probability measure.
  Let $\mathcal H,$  
$\bar\mu$ and $\Ic$  be as in the  statement of  Theorem \ref{thm_main}. Then the conclusion of this theorem
as  well as  that  of   Corollary \ref{C:extremal} hold.
In particular, $\Fc$ admits  a unique  Lyapunov  exponent.
\end{corollary}  
  
The novelty of  the last corollary is  that the   (unique)  Lyapunov  exponent of such a foliation $\Fc$ is  intrinsic and  canonical.

 In fact,  we will prove the following more  complete version of Theorem \ref{thm_main} where we  introduce
the  so-called {\it integrability  condition}.
  \begin{theorem}\label{thm_main_2}
    Let $\Fc=(X,\Lc,E)$    be  
a   holomorphic Brody hyperbolic foliation   with      hyperbolic singularities $E$ in a   compact complex projective  surface $X.$
  Let $T$ be  a harmonic  current  tangent  to $\Fc$  which does not give mass to any  invariant  analytic curve.
Then  we have  {\rm
\begin{equation}\label{e:necessary_integrability}
 \text{(the integrability  condition):}\qquad \int_X| \log \dist(x,E)| \cdot (T\wedge g_P)(x)<\infty.
 \end{equation}
 }
\end{theorem}

Using the  Poincar\'e metric of the  punctured disc as  a  local model and Lemma  \ref{lem_poincare}  below, we  can prove that if the harmonic  current $T$ has 
a positive mass on an invariant analytic  curve, then the  integral in \eqref{e:necessary_integrability} is  infinite, i.e., the integrability  condition
breaks down.

The  condition  of Brody hyperbolicity  seems to be indispensable  for the   integrability of the  holonomy cocycle. Indeed,
in a very recent work \cite[Theorem A]{Hussenot} Hussenot discovers  the  following  remarkable property
 for  a class of  Ricatti foliations $\Fc$ on $\P^2.$
For every $x\in\P^2$ outside invariant curves of every foliation in this class, it holds that
   $$
 \limsup\limits_{t\to \infty} {1\over  t} \log  \| \mathcal H(\omega,t) \|= \infty 
 $$ for almost every path $\omega\in\Omega_x$ with respect to the Wiener measure at $x$ which lives
 on the leaf passing through $x.$ By     Glutsyuk \cite{Glutsyuk} and   Lins Neto   \cite{Neto94}, these foliations  are hyperbolic since all their  singular points    have nondegenerate linear part. Nevertheless, neither of them is Brody hyperbolic because  they all contain integral curves which are some images of $\P^1$  (see  Remark  \ref{R:Brody_sufficiency} below). 
\subsection{Outline of the proofs}\label{SS:outline}
Now   we  discuss the  method  of the proof of Theorems  \ref{thm_main} and  \ref{thm_main_2}.
Our approach consists  of  two main steps. 

In the  first main step  we show that    Theorem \ref{thm_main}  follows from  Theorem  \ref{thm_main_2}, i.e., from
the integrability condition  \eqref{e:necessary_integrability}. To this end  we  study  the behavior of the holonomy cocycle
near the  singularities  with respect to  the leafwise Poincar\'e metric.  
Let $g_X$ be  a Hermitian  metric on $X$
 and let $\dist$ denote the  distance  on $X$ induced by $g_X.$ Roughly speaking, this step
 quantifies  the  expansion speed of the hololomy  cocycle in terms
 of the ambient metric $g_X$ when  one travels  along   unit-speed geodesic rays.
 The main ingredients  are in our joint-works with  Dinh and Sibony in  \cite{DinhNguyenSibony1,DinhNguyenSibony2,DinhNguyenSibony3}.

  The  second main step is  then devoted to  the proof of Theorem \ref{thm_main_2}, i.e., inequality \eqref{e:necessary_integrability}.
  The main difficulty is that  known  estimates (see, for example,  \cite{DinhNguyenSibony1})
 on the behavior of $T$
near  linearizable  singularities,    only  give a  weaker  inequality
  \begin{equation} \label{e:known_estimate}
\int_X | \log \dist(x,E)|^{1-\delta}\cdot( T\wedge g_P)(x)<\infty, \qquad\forall \delta>0.
  \end{equation}
  So \eqref{e:necessary_integrability} is  the limiting case of  \eqref{e:known_estimate}.
  The proof of \eqref{e:known_estimate} in \cite[Proposition 4.2]{DinhNguyenSibony1} relies on  the finiteness of the Lelong number of $T$  at
  every   point. Recall from Skoda \cite{Skoda} that the Lelong number of a harmonic  current at a given point  is an important indicator measuring the  mass-density
  of the current at that point.
 Moreover, our result in \cite{NguyenVietAnh3} (see also a recent result of Dinh-Sibony \cite{DinhSibony15}) sharpens the last estimate
by showing that the Lelong number  of $T$ vanishes at every   hyperbolic singular point $x\in E.$  
Nevertheless, even this better estimate does not suffice to prove \eqref{e:necessary_integrability}.

  To overcome this  obstacle, we use  a cohomological idea
  which  exploits  fully  the assumption that $X$ is  projective.
  This  assumption  imposes a stronger mass-clustering  condition on harmonic  currents.
  
\smallskip

Now  we  explain  briefly  our proof of  the integrability condition  \eqref{e:necessary_integrability}.
Our approach is  based on a cohomological  invariance (see Proposition \ref{P:coho_mass}) which  says roughly that
if two algebraic  curves $\Cf $ and $\Df$ on $X$ are cohomologous (for example,  if they have the same algebraic  degree  when $X=\P^2$), then  under  suitable  assumptions, we can define the wedge-product
$T\wedge[\Cf],$ $T\wedge [\Df]$   which are finite positive Borel measures and their masses are equal, i.e,
\begin{equation}\label{e:coho_inv_intro}
\int_X T\wedge[\Cf] =\int_X T\wedge [\Df].
\end{equation}
Before  going further,  let us   explain     why  equality \eqref{e:coho_inv_intro} could be  true.
Since $\Cf $ and $\Df$ on $X$ are cohomologous on $X$, the  $\partial\overline\partial$-lemma for compact K\"ahler manifolds
provides us an integrable  function  $u$ on $X$  such that
$$
[\Cf]-[\Df]=i \partial\overline\partial u\qquad \text{in the sense of currents.} 
$$
So  we can write
$$
\int_X T\wedge[\Cf] -\int_X T\wedge [\Df]=\int_X T\wedge i \partial\overline\partial u.
$$
The function $u$  is, in general, not smooth near $\Cf$ and $\Df.$ However, if we  could consider it like a smooth function,
Stokes' theorem would turn the  right hand side of the last line into the following integral
$$
\int_X u (i \partial\overline\partial T ) =0,
$$
where the  last equality holds since  the harmonicity of $T$ implies that $i \partial\overline\partial T=0.$
Therefore, we may expect equality \eqref{e:coho_inv_intro} to hold.

 Resuming  the sketchy proof of  the integrability condition  \eqref{e:necessary_integrability}, let $x_0\in E$ and  fix a  coordinate system $(z,w)$ around $x_0$ such that
the two separatrices of the hyperbolic singular point $x_0$  are $\{z=0\}$ and $\{w=0\}.$
Then we can show that the vanishing of the Lelong number of $T$ at  $0$  established in \cite{NguyenVietAnh3} is  equivalent to 
the following convergence 
\begin{equation}\label{e:reduction_intro}
\int_{\B(0,r)}  T\wedge[z=r] \to  0 \qquad\text{as}\qquad r\to 0,
\end{equation}
where   $\B(0,r)$ is the ball in $X$ with center $x_0=0$ and radius $r.$
And more importantly, 
  the integrability  condition \eqref{e:necessary_integrability}  is  somehow  equivalent to the statement that
the  convergence \eqref{e:reduction_intro}
has,  in a certain very  weak sense, a speed   of order $|\log{ r}|^{-\delta}$  as  $r\to 0$ for some $\delta>0.$
 Note, however, that this speed   does  not at all mean that $\int_{\B(0,r)}  T\wedge[z=r]=O(|\log{ r}|^{-\delta}).$ For a precise meaning of this  speed, see  Remark \ref{R:speed} below.

Now suppose for  the sake of simplicity that $X=\P^2$ and $N\in\N$ is  large  enough. We  choose an algebraic curve $\Cf$ of degree $N$ which looks like the analytic curve $\{z=w^N\}$ near $0.$
 We also   choose an algebraic curve $\Df$ of degree $N$ which looks like the analytic curve $\{r=z-w^N\}$ near $0.$  The following  seven
  observations play  a key role in our approach, where  $0<\delta<1$ is   an exponent independent of $r$
and $N,$ $0<r<r_0$ with $r_0>0$ a fixed small number.

$(i)$   Outside  a  small ball  $\B(0,r_0),$
  the analytic curve $\{z=w^N\}$  (and hence  the algebraic curve $\Cf$) falls into a tubular neighborhood
with size $O(r^\rho)$  of
  the analytic curve $\{r=z-w^N\}$ (and hence the algebraic curve $\Df$), where $\rho$ is  a real number depending on $N$
 with $0<\rho\leq 1.$ So we may expect
  $$
  \int_{X\setminus\B(0,r_0)}  T\wedge[\Cf] =\int_{X\setminus\B(0,r_0)} T\wedge [\Df]+O(r^\rho).
  $$
 
$(ii)$ Outside 
  the ball   $\B(0,r^{1/N}|\log{r}|^{3/N})$ and inside the small ball  $\B(0,r_0),$  
 the   analytic  curve $\{r=z-w^N\}$ (and hence the  algebraic curve $\Df$)  behaves like  the analytic curve $\{z=w^N\}$  (and hence the  algebraic curve $\Cf$) while intersecting the two curves
 with a general leaf.  Indeed, when $|w|\geq r^{1/N}|\log{r}|^{3/N},$  
 we have $r\ll |w|^N.$ So we may expect
  $$
  \int_{ \B(0,r_0)\setminus \B(0,r^{1/N}|\log{r}|^{3/N})} T\wedge [\Cf]=
\int_{ \B(0,r_0)\setminus \B(0,r^{1/N}|\log{r}|^{3/N})} T\wedge [\Df]
+O(|\log r|^{-\delta}).
  $$

 $(iii)$ The corona
     $\A_{r,N}:=\B(0,r^{1/N}|\log{r}|^{3/N})\setminus \B(0,r^{1/N}|\log{r}|^{-3/N}) $
     is,  in some  sense,   small and it may be considered as  negligible.
 So we may expect
$$
  \int_{ \A_{r,N}} T\wedge [\Cf]=O((\log r)^{-\delta})\quad\text{and}\quad 
\int_{  \A_{r,N} } T\wedge [\Df]
=O(|\log r|^{-\delta}).
  $$
  
 $(iv)$ Our next  observation is   the following  partition of $X$  for $0<r\ll 1:$
\begin{equation*}
X=\big(X\setminus\B(0,r_0)\big)\coprod \big(\B(0,r_0)\setminus   \B(0,r^{1/N}|\log r|^{3/N} )\big)\coprod \A_{r,N}\coprod  
\B(0,r^{1/N}|\log r|^{-3/N}).
\end{equation*}
This  allows us to decompose both integrals of \eqref{e:coho_inv_intro}
 into corresponding  pieces. 
 
Consequently, when the degree $N$ is  sufficiently high, by taking into account the   observations $(i)$-$(ii)$-$(iii)$-$(iv)$,
 and using  \eqref{e:coho_inv_intro},  we  see that
 $$
  \int_{\B(0,r^{1/N}|\log{r}|^{-3/N}) } T\wedge [\Cf]-
\int_{ \B(0,r^{1/N}|\log{r}|^{-3/N})  } T\wedge [\Df]
=O(|\log r|^{-\delta}).
  $$

$(v)$ Inside 
  the ball   $\B(0,r^{1/N}|\log{r}|^{-3/N}),$ the analytic  curve  $\{z=w^N\}$ (and hence the algebraic curve  $\Cf$)    clusters around $0,$ in a  certain sense,  much more often 
than the analytic  curve  $\{z=r\}$ (and hence the algebraic curve  $\Df$). Indeed,     we see   in the equation $z=w^N$ that both $z$ and $w$ can tend to $0,$  whereas in the  equation $z=r,$
only $w$ could tend to $0.$ So we may expect that  in a certain sense,  
$$
     \int_{\B(0,r^{1/N}|\log{r}|^{-3/N}) } T\wedge [\Df] \ll
\int_{ \B(0,r^{1/N}|\log{r}|^{-3/N})  } T\wedge [\Cf]
 .
  $$
 This, combined  with the  estimate  obtained just before $(v)$,  implies that both integrals
 $$
 \int_{\B(0,r^{1/N}|\log{r}|^{-3/N}) } T\wedge [\Cf] \quad\text{and}\quad
\int_{ \B(0,r^{1/N}|\log{r}|^{-3/N})  } T\wedge [\Df]
 $$
 admit, in a certain sense, a speed   of order $|\log{ r}|^{-\delta}.$
   
$(vi)$ Inside 
  the ball   $\B(0,r^{1/N}|\log{r}|^{-3/N}),$ 
 the   analytic  curve $\{r=z-w^N\}$ (and hence the  algebraic curve $\Df$)  behaves like  the analytic curve $\{z=r\}$ while intersecting the two curves
 with a general leaf. Indeed, when $|w|\leq r^{1/N}|\log{r}|^{-3/N},$  
 we have $|w|^N\ll r.$
 So we may expect
$$
  \int_{\B(0,r^{1/N}|\log{r}|^{-3/N}) } T\wedge [\Df]-
\int_{ \B(0,r^{1/N}|\log{r}|^{-3/N})  } T\wedge [z=r]
=O(|\log r|^{-\delta}).
  $$
 This, together with the  estimate just obtained before $(vi),$ 
yields  that
 $$
  \int_{ \B(0,r^{1/N}|\log{r}|^{-3/N})  } T\wedge [z=r]
  $$
has, in a certain sense, a speed   of order $|\log{ r}|^{-\delta}.$ 
 
$(vii)$ Our last observation is that one can show  that there is  a constant $c_N>1$ independent of $r$  such that
$$ c^{-1}_N \int_{\B(0,r^{1/N})}  T\wedge[z=r]\leq \int_{\B(0,r)}  T\wedge[z=r]\leq  c_N \int_{\B(0,r^{1/N})}  T\wedge[z=r].$$
 This, together with the  estimate just obtained before $(vii),$ 
implies  that
 $$
  \int_{ \B(0,r )  } T\wedge [z=r]
  $$
 admits, in a certain sense, a speed   of order $|\log{ r}|^{-\delta}.$
 Hence, we get the convergence with speed of  \eqref{e:reduction_intro}. This is  what  we  are looking for.

In fact, the factor  $|\log{r}|^{3/N}$ appearing in the above  observations  comes  from the degeneration of the 
Poincar\'e metric $g_P$ relative to the ambient metric $g_X$  (see formula \eqref{eq_relation_Poincare_Hermitian_metrics}).
Moreover, the larger  the degree  $N$ is,
the more  evident  the  mass-clustering phenomenon in the previous observation  becomes.  

Our approach underlines several  tasks.  On the one hand, we need to define a geometric intersection
of a harmonic  current with a  singular analytic curve defined on a neighborhood of a singular point of the foliation.
On the other hand, we need to approximate  some (local) analytic curves by global algebraic ones.   
The assumption of projectivity  of $X$ is needed in order to ensure a good supply of algebraic  curves.
 
\subsection{Organization of the  article  and  acknowledgments}
The article  is  organized as  follows.

\smallskip

In   Section \ref{S:background}   below  we  set up the  background 
and 
prepare the auxiliary results. Some  basic facts extracted from  \cite{DinhNguyenSibony1,DinhNguyenSibony2,DinhNguyenSibony3}
 about  the behavior of the leafwise Poincar\'e metric
near   the singularities are recalled here.
    A quick  discussion  on  the heat diffusions as well as the
   measure  theory on sample-path spaces and  the holonomy cocycles  will also be given in  Section \ref{S:background}.
     On the   other hand, Section \ref{S:holonomy_cocycle}  is  devoted to an  analytic  study  on the  holonomy cocycles.
     The  proofs 
 of    Theorem \ref{thm_main}  and  Corollary \ref{C:extremal}  (modulo the integrability  condition \eqref{e:necessary_integrability}, i.e., Theorem  \ref{thm_main_2})  will be  provided   in Section \ref{section_Main_Theorem_1}.
     
  The remainder of the  article  is then  devoted to the proof of  inequality \eqref{e:necessary_integrability}.
  This can be  done  in three reduction steps.  

Section \ref{S:parametrization} 
 collects several  recent results  about the mass-clustering  of harmonic currents and a  special parametrization of leaves
near  hyperbolic  singularities.  

The first reduction is carried out in  Section  \ref{S:Main_Theorem_2}. Namely,
the proof of   the integrability  condition  \eqref{e:necessary_integrability} is reduced to that of  Theorem \ref{T:main_estimate}.



Section \ref{S:intersection_interpretation}
lays the background for the geometric intersection
of a harmonic current with an analytic curve defined on an open subset of $X.$ 
We are inspired by  Forn\ae ss-Sibony's recent works in 
  \cite{FornaessSibony1,FornaessSibony2,FornaessSibony3}.
Special attention is  focused on the case where the analytic curve is defined on  a neighborhood
of a singular point of the foliation. We also  introduce  the notion of interpretations:
a way which permits us to estimate the mass of a geometric intersection  efficiently.
 
 In Section \ref{S:test_curves} we introduce  test curves which consist of algebraic  curves and analytic ones.
 The former curves are defined globally on $X,$ whereas the latter ones are only defined on 
 a neighborhood of a singular point of the foliation.
 Next, we state  the first collection of basic estimates 
(see Propositions \ref{P:comparison}, \ref{P:comparison_bis}, 
 \ref{P:separatrices})
regarding  the mass estimates of the geometric intersection
of  a harmonic current with test curves.
This allows us to reduce  the proof of    Theorem \ref{T:main_estimate} to those  of 
Propositions
 \ref{P:separatrices} and  \ref{P:key_interpretations}   modulo Propositions \ref{P:comparison}, \ref{P:comparison_bis}. This is  the  second reduction.

 Section \ref{S:coho}
  states the second     collection of   basic estimates (see Propositions \ref{P:mass_1}
and \ref{P:mass_2}). Next, using  these estimate we establish a cohomological
invariance result (see Proposition \ref{P:coho_mass}) which permits us to prove Proposition
 \ref{P:separatrices}. So  modulo Propositions
 \ref{P:comparison}, \ref{P:comparison_bis},  \ref{P:mass_1}, \ref{P:mass_2}, the proof of    Theorem \ref{T:main_estimate} is finally reduced to that  of 
Proposition \ref{P:key_interpretations}.
 This is  the  last reduction.
 
   In Section \ref{S:intersection_with_leaves} we study how the  intersection points  of test curves with a general  leaf near singularities
 distribute. This  analysis will be helpful when we want to estimate
the mass of some geometric intersections in terms of interpretations.
Based on this  analysis,
 the remaining sections  are then devoted to the proof of the above  basic estimates (Propositions
 \ref{P:comparison}, \ref{P:comparison_bis},  \ref{P:mass_1}, \ref{P:mass_2}
 and  \ref{P:key_interpretations}).

  Section \ref{S:balls} establishes Proposition \ref{P:comparison} and  the first half of Proposition  \ref{P:key_interpretations}.
  
 Section \ref{S:outside_corona}
  is devoted to the proof of  Proposition \ref{P:mass_1}.

   Proposition \ref{P:mass_2} which consists of 3 basic estimates is  proved in Section  \ref{S:on_corona}.
The proof of each estimate  occupies a  whole subsection.

Finally, Section \ref{S:completion}  completes the proof of the last half part of  Proposition \ref{P:key_interpretations} as well as 
 the proof of   Proposition \ref{P:comparison_bis}.

 
\smallskip

\noindent
{\bf Acknowledgments. } I would like to thank    Nessim  Sibony  for suggesting me to work on
the holonomy cocycle. 
 Sincere thanks also go to  Tien-Cuong Dinh   
  for  interesting  discussions.  I am  also  grateful to  Mihai Paun  and Jaigyoung Choe for very kind  help, and 
   to  the referee for carefully reading the paper and for suggestions leading to the improvement of
the exposition. 
The paper was partially prepared 
during my visit  at  Vietnam  Institute for Advanced Study in Mathematics (VIASM) and 
at the Center for Mathematical Challenges (CMC) of the Korea Institute for Advanced Study (KIAS).
  I would like to express my gratitude to these organizations for hospitality and  for  financial support.  

 
\section{Background}\label{S:background}


  Although  the main theorems  only deal  with   complex  surfaces as   the ambient manifold $X,$  we consider, in this  section, 
the  general case where  $\dim X\geq 2.$ Indeed, the  section   
  may  serve  as the background for the  ongoing   parts of the article.
For a recent account  on the     theory of foliations,  the  reader is  invited  to consult  the  survey articles  by   Forn\ae ss-Sibony \cite{FornaessSibony2},
 Ghys \cite{Ghys}, Hurder \cite{Hurder} and  textbooks by Candel-Conlon, Walczak \cite{CandelConlon1, CandelConlon2, Walczak}.  
 
\noindent {\bf Notation.} Throughout the  article, we denote by $\D$   the unit disc in $\C.$
For $r>0$  we denote by $\D_r$ and $r\D$ interchangeably the disc in $\C$ with center  $0$  and
with radius $r.$ 
 We use  several notions of distances:

$\bullet$  $\dist$ denotes the distance on $X$ induced by the Hermitian metric $g_X.$

$\bullet$
$\dist_P$ denotes the Poincar\'e metric, it will be defined in Subsection \ref{SS:background}, whereas a more elaborate variant of this distance will be considered in Section \ref{S:holonomy_cocycle}.

$\bullet$
$\dist_C$ denotes the compatible  pseudo-distance, it will be  defined in  
Section \ref{S:intersection_with_leaves}.
   
   The current of integration on an analytic curve $\Cf$ 
  is denoted by $[\Cf].$  
 
  In this  work the letters $c,$ $c',$ $c_0,$  $c_1,$ $c_2$ etc. denote  positive constants, not necessarily the same at each  occurrence.  The notation $\gtrsim$ and $\lesssim$ means inequalities  up to  a  multiplicative constant, whereas  we  write  $\approx$ when  both inequalities  are satisfied.
 Let $O$ and $o$ denote   the usual Landau asymptotic  notations. 
Let $\lof(\cdot):=1+|\log(\cdot)|$ be a log-type function. 
 

\subsection{Foliations, singularities,  Poincar\'e metric and Brody hyperbolicity} \label{SS:background} 

Let $X$ be a complex  manifold of  dimension $k.$  A {\it   holomorphic foliation by curves}   $\Fc=(X,\Lc)$ on $X$ 
 is  the  data of  a {\it foliation  atlas} $\Lc$ with charts 
$$\Phi_p:\U_p\rightarrow \B_p\times \T_p.$$
Here, $\T_p$ is a domain in $\C^{k-1},$ $\B_p$ is a domain in $\C$,  $\U_p$ is  a  domain in 
$X,$ and  
$\Phi_p$ is  biholomorphic,  and  all the changes of coordinates $\Phi_p\circ\Phi_q^{-1}$ are of the form
$$x=(y,t)\mapsto x'=(y',t'), \quad y'=\Psi(y,t),\quad t'=\Lambda(t).$$

The open set $\U_p$ is called a {\it flow
  box} and the Riemann surface $\Phi_p^{-1}\{t=c\}$ in $\U_p$ with $c\in\T_p$ is a {\it
  plaque}. The property of the above coordinate changes insures that
the plaques in different flow boxes are compatible in the intersection of
the boxes. Two plaques are {\it adjacent} if they have non-empty intersection.
 
A {\it leaf} $L$ is a minimal connected subset of $X$ such
that if $L$ intersects a plaque, it contains that plaque. So a leaf $L$
is a  Riemann surface  immersed in $X$ which is a
union of plaques. A leaf through a point $x$ of this foliation is often denoted by $L_x.$
  A {\it transversal} is  a complex submanifold of codimension  1 in $X$  which is  transverse to  the leaves  of $\Fc.$

A {\it    holomorphic foliation by  curves with singularities},    or equivalently a {\it   singular  holomorphic foliation by  curves,} is  the data
$(X,\Lc,E)$, where $X$ is a complex  manifold, $E$ a closed
subset of $X$ and $(X\setminus E,\Lc)$ is a  holomorphic  foliation by curves. Each point in $E$ is  said to be  a {\it  singular point,} and   $E$ is said to be {\it the  set of singularities} of the foliation. We always  assume that $\overline{X\setminus E}=X$, see  e.g. \cite{DinhNguyenSibony1, FornaessSibony1}  for more details.
If  $X$ is  compact, then we say that the foliation $(X,\Lc,E)$ is  {\it compact.}

We say that a vector field $F$ on $\C^k$ is {\it generic linear} if
it can be written as 
$$F(z)=\sum_{j=1}^k \lambda_j z_j {\partial\over \partial z_j}$$
where $\lambda_j$ are non-zero complex numbers.
The integral curves of $F$ define a  foliation on
$\C^k$. 
The condition $\lambda_j\not=0$ implies that the foliation   
has an isolated singularity at 0. Consider a   holomorphic foliation by  curves   $\Fc=(X,\Lc,E)$ with  a discrete set of 
singularities $E.$ We say that a
singular point $x\in E$ is {\it linearizable} if 
there is a (local) holomorphic coordinate system of $M$ on an open neighborhood $\U_x$ of $x$ on which
$x$ is identified with  $0\in \C^k$ and 
 the
leaves of $\Fc$ are integral curves of a generic linear vector field. 
Such   neighborhood  $\U_x$ is  called 
a {\it singular flow box} of  $x.$ When  $\dim X=k=2,$  we say that a
linearizable singular point $x\in E$ is {\it hyperbolic} if 
 the  associated  generic linear vector field $F(z) =\lambda_1 z_1 {\partial\over \partial z_1}
+ \lambda_2 z_2{\partial\over \partial z_2}$ satisfies $\lambda_1/\lambda_2\not\in\R.$ This property  is independent of the choice of  coordinates.  

 For the  sake of  simplicity, we  adopt the  following terminology 
throughout the  article:  Unless  otherwise specified, 
a  {\it foliation } means exactly    a {\it  holomorphic   foliation  by curves $\Fc=(X,\Lc,E)$   in a Hermitian  complex manifold
$(X,g_X)$   with a   (eventually  empty)  set $E$ of  singularities.}

Let $\Fc=(X,\Lc,E)$ be  a foliation.
We denote by  $\Cc_\Fc$ the  sheaf of functions  $f$ defined and  compactly supported  on $X\setminus E$ which are  leafwise  smooth
 and transversally continuous, that is,  
 for each  foliation   chart
$\Phi_p:\U_p\rightarrow \B_p\times \T_p$ and all $m,n\in\N,$
 the derivatives  ${\partial^{m+n}(f\circ\Phi_p^{-1})\over  \partial y^m\partial\bar{y}^n}$ exist and are continuous in $(y,t).$ 
 
 Let $g_P$ be  the Poincar\'e metric 
on the unit disc  $\D,$  defined  by
$$ g_P(\zeta):={2\over (1-|\zeta|^2)^2} i d\zeta\wedge d\overline\zeta,\qquad\zeta\in\D,\quad \text{where}\ i:=\sqrt{-1}.  $$
  
A leaf $L$  of the  foliation is  said to be  {\it hyperbolic} if
it  is a   hyperbolic  Riemann  surface, i.e., it is  uniformized   by 
$\D.$   
 For a hyperbolic  leaf $L_x,$ 
 let $\phi_x:\D\to L_x$ be a universal covering map  with $\phi_x(0)=x$. Note that $\phi_x$ is  unique  up to  a  rotation  around $0\in \D.$ Then, by pushing   forward  the Poincar\'e metric $g_P$
on $\D$  
  via $\phi_x,$ we obtain the  so-called {\it Poincar\'e metric} on $L_x$ which depends only on the leaf.  The latter metric is given by a positive $(1,1)$-form on $L_x$  that we also denote by $g_P$ for the sake of simplicity.
The   foliation   is  said to be {\it hyperbolic} if  
  its leaves   are all  hyperbolic. 

 For simplicity  we still denote by   $g_X$  the  Hermitian  metric on leaves of the foliation $(X\setminus E,\Lc)$  induced by  the ambient Hermitian metric  $g_X.$
  Consider the  function $\eta:\ X\setminus E\to [0,\infty]$ defined by
  $$
  \eta(x):=\sup\left\lbrace \|D\phi(0)\| :\ \phi:\ \D\to L_x\ \text{holomorphic such that}\ \phi(0)=x  \right\rbrace.
  $$
  Here, for the  norm of the  differential $D\phi$ we use the Poincar\'e metric on $\D$ and the Hermitian metric
  $g_X$ on $L_x.$
  We  record  the following relation  between  $g_X$   and  the Poincar\'e metric $g_P$ on leaves
\begin{equation}
\label{eq_relation_Poincare_Hermitian_metrics}
g_X=\eta^2 g_P.
\end{equation}

Recall from  a recent  joint-work with Dinh and Sibony   \cite{DinhNguyenSibony3} the following notion. 

\begin{definition}\label{D:Brody}\rm 
A   foliation $\Fc=(X,\Lc,E)$   is  said  to be {\it Brody hyperbolic}  if  there is  a constant  $c>0$  such that
$\eta(x) \leq  c$ 
for all $x\in X\setminus E.$ 
\end{definition}

 \begin{remark}\rm  \label{R:Brody_sufficiency}
It is clear that if the foliation is Brody hyperbolic then it is hyperbolic.
Moreover, when  $X$ is  compact, the  Brody hyperbolicity is  equivalent to 
the non-existence  of  holomorphic non-constant maps
$\C\rightarrow X$  such that out of  $E$ the image of $\C$ is  locally contained in a leaf, 
see \cite[Theorem 15]{FornaessSibony2}.  
\end{remark}

The following  result   is  due to  Lins Neto  and Soares  \cite{NetoSoares} (we only give  the two-dimensional version although their result is
also valid in $\P^k$):
\begin{theorem}\label{T:NetoSoares}
There exists  a real Zariski dense open subset $\mathcal S(d)$ of the  set of foliations with a given degree $d>1$ in $\P^2$
such that  any $\Fc\in\mathcal S(d)$ satisfies
\begin{enumerate}
\item[1)] $\Fc$ has only  hyperbolic singularities and  no other singular points;

\item[2)] $\Fc$ has no invariant algebraic  curve. 
\end{enumerate}
\end{theorem}

On the  other  hand, Brunella  \cite{Brunella} has  shown that  each $\Fc\in\mathcal S(d)$   does  not admit 
any holomorphic non-constant map
$\C\rightarrow \P^2$  such that out of   the singularities of $\Fc$  the image of $\C$ is  locally contained in a leaf.  Consequently, by Definition \ref{D:Brody} and Remark  \ref{R:Brody_sufficiency},
a generic   holomorphic foliation  in $\P^2$ with a given degree $d>1$  satisfies the  hypotheses of  Theorem  \ref{thm_main}, Corollaries  \ref{C:extremal}  and \ref{cor_th_main}
  and Theorem \ref{thm_main_2}.


 
 \subsection{A local  model}
 \label{SS:local_model}

 For  $\U:=\D^k$ and  $t>0,$  let
 $t\U:= (t\D)^k,$  see Notation at the  beginning of the  section for the definition of  $t\D .$
 
First  we give  a  description of the local model for linearizable singularities.
Consider the foliation $(\D^k,\Lc,\{0\})$ which is the restriction to $\D^k$ of the foliation associated to the vector field
$$F(z)=\sum_{j=1}^k \lambda_jz_j{\partial\over \partial z_j}$$
with $\lambda_j\in\C\setminus\{0\}$.  The foliation is singular at the origin. We use here the Euclidean metric on $\D^k$.
Write $\lambda_j=s_j+it_j$ with $s_j,t_j\in\R$.  
For $x=(x_1,\ldots,x_k)\in \D^k\setminus\{0\}$, define the holomorphic map $\psi_x:\C\rightarrow\C^k\setminus\{0\}$ by
\begin{equation}\label{eq_leaf_equation}
\psi_x(\zeta):=\Big(x_1e^{\lambda_1\zeta},\ldots,x_ke^{\lambda_k\zeta}\Big)\quad \mbox{for}\quad \zeta\in\C.
\end{equation}
It is easy to see that $\psi_x(\C)$ is the integral curve of $F$ which contains
$\psi_x(0)=x$.
Write $\zeta=u+iv$ with $u,v\in\R$. The domain $\Pi_x:=\psi_x^{-1}(\D^k)$ in $\C$ is defined by the inequalities
$$s_ju-t_jv< -\log|x_j| \quad \mbox{for}\quad j=1,\ldots,k.$$
So, $\Pi_x$ is a convex polygon which is not necessarily
bounded. It contains $0$ since $\psi_x(0)=x$.  
The leaf of $\Fc$ through $x$ contains  the  Riemann surface 
\begin{equation}\label{e:Riemann_surface_L_x}
\widehat L_x:=\psi_x(\Pi_x)\subset L_x.
\end{equation}
In particular, the leaves in a singular flow box are parametrized   using holomorphic maps $\psi_x:\Pi_x\to L_x.$

 Now  let  $\Fc=(X,\Lc,E)$ be a  Brody hyperbolic foliation on a   Hermitian compact complex manifold $(X,g_X).$
Assume as usual that $E$  is  finite  and   all  points of $E$  are  linearizable. 
Let $\dist$ be the  distance on $X$ induced  by the ambient  metric $g_X.$
We only consider flow boxes which are biholomorphic to $\D^k$. A {\it regular flow box} is a flow boxes outside the singularities. {\it Singular flow boxes} are identified to their models $(\D^k,\Lc,\{0\})$ as 
described above.
For each singular point $x\in E$, we fix a singular flow box $\U_x$ such that $2\U_x\cap 2\U_{x'}=\varnothing$ if $x,x'\in E$ with $x\not=x'$. We also cover $X\setminus \cup_{x\in E} \U_x$ by a finite   number of regular flow boxes $(\U_p)_{p\in P}$ which are  fine enough. In particular, each $\overline \U_p$ is contained in a larger regular flow box $2\U_p$ with $2\U_p\cap  E=\varnothing.$ 
Thus  we obtain a  finite  cover $\Uc$ of $X$  consisting  of  regular  flow boxes $\U_p$ and  singular ones $(\U_x)_{x\in E}.$  In this  section  we
 suppose that  the ambient metric $g_X$ coincides with the standard Euclidean metric on each singular flow box $2\U_x\simeq 2\D^k,$ $x\in E.$ For $x=(x_1,\ldots,x_k)\in\C^k,$ let  $\|x\|$ be the  standard Euclidean norm of $x.$ Recall  that
 $\lof(\cdot):=1+|\log(\cdot)|.$

We record here   the following crucial result which gives  a precise estimate on the  function $\eta$  introduced  in  (\ref{eq_relation_Poincare_Hermitian_metrics}).   
\begin{lemma} \label{lem_poincare}
We keep  the above  hypotheses
and notation. Then there exists  a  constant $c>1$  with the following properties.\\
1)   $\eta \leq c$ on $X$, $\eta\geq c^{-1}$ outside the singular flow boxes $\cup_{x\in E}{1\over 4}\U_x$ and 
$$c^{-1} \cdot s \lof s \leq\eta(x)  \leq c \cdot s \lof s$$
for $x\in X\setminus E$  and $s:=\dist(x,E).$  
\\
2)  For every  $x$ in  a singular box  which is identified with $\D^k,$
 for every $\zeta\in\Pi_x,$
 $$
    c^{-1}\cdot {i d\zeta\wedge d\bar\zeta \over (\lof (\psi_x(\zeta)))^2}\leq (\psi_x^* g_P) (\zeta)\leq  c\cdot {i d\zeta\wedge d\bar\zeta \over (\lof (\psi_x(\zeta)))^2}.
 $$
\end{lemma}
\begin{proof}
Part 1) has been  proved  in   \cite[Proposition 3.3]{DinhNguyenSibony3}.

To prove Part 2),
write  $y=\psi_x(\zeta)$ for $\zeta\in\Pi_x,$  and  observe that 
$$\min\{|\lambda_1|,\ldots,|\lambda_k|\} \cdot\|y\|\leq   \|\psi'_x(\zeta)\|\leq \max\{|\lambda_1|,\ldots,|\lambda_k|\} \cdot \|y\|.$$
On the other hand,
 recall from 
(\ref{eq_relation_Poincare_Hermitian_metrics}) that
$$
i\ddbar \|y\|^2=\eta^2(y) g_P(y),
$$
Moreover, we know  from Part 1) that $\eta(y)\approx \|y\| \lof\|y\|.$
Pulling  back   both members of the last equality  by $\psi_x$ and  
using the previous  estimates for  $\|\psi'_x(\zeta)\|$ and for $\eta(y),$ we  obtain  the  desired  estimate for  $(\psi_x^* g_P) (\zeta).$
\end{proof}
 \subsection{Heat diffusions and harmonic currents versus harmonic  measures}
     
Let  $\Fc=(X,\Lc,E)$ be  a hyperbolic  foliation.    
     The leafwise Poincar\'e metric
$g_P$  induces  the corresponding 
Laplacian $\Delta$  on leaves such that
\begin{equation}\label{e:Laplacian}
i\ddbar u=\Delta u \cdot g_P,\qquad \text{on $X\setminus E$ for all}\ u\in \Cc_\Fc.
\end{equation}
A positive finite  Borel measure  $\mu$ on $X$ is said  to be
{\it  harmonic}  if
$$
\int_X  \Delta u d\mu=0
$$ 
 for all  functions  $u\in \Cc_\Fc.$

 For  every point  $x\in X\setminus E,$
 consider  the   {\it heat  equation} on $L_x$
 $$
 {\partial p(x,y,t)\over \partial t}=\Delta_y p(x,y,t),\qquad  \lim_{t\to 0} p(x,y,t)=\delta_x(y),\qquad   y\in L_x,\ t\in \R_+.
 $$
Here   $\delta_x$  denotes  the  Dirac mass at $x,$ $\Delta_y$ denotes the  Laplacian  $\Delta$ with respect to the  variable $y,$
 and  the  limit  is  taken  in the  sense of distribution, that is,
$$
 \lim_{t\to 0+
}\int_{L_x} p(x,y,t) f(y) g_P( y)=f(x)
$$
for  every  smooth function  $f$   compactly supported in $L_x.$   

The smallest positive solution of the  above  equation, denoted  by $p(x,y,t),$ is  called  {\it the heat kernel}. Such    a  solution   exists   because  $(L_x,g_P)$ is
complete and   of bounded  geometry  (see, for example,  \cite{CandelConlon2,Chavel}). 
 The  heat kernel   gives  rise to   a one  parameter  family $\{D_t:\ t\geq 0\}$ of  diffusion  operators    defined on bounded Borel measurable functions  on $M\setminus E:$
 \begin{equation}\label{eq_diffusions}
 D_tf(x):=\int_{L_x} p(x,y,t) f(y) g_P (y),\qquad x\in X\setminus E.
 \end{equation}
 We record here  the  semi-group property  of this  family: $D_0=\id$ and $D_{t+s}=D_t\circ D_s$ for $t,s\geq 0.$

  Let $\Cc^1_\Fc$ denote the  space   of  forms $h$ of bidegree $(1,1)$ defined  on
leaves  of the foliations such that $h$    is  compactly supported  on $X\setminus E$  and  that  $h$ is  leafwise  smooth
 and transversally continuous.  A form $h \in \Cc^1_\Fc$ is  said to be {\it positive} if its restriction to every plaque
 is  a  positive $(1,1)$-form in the  usual  sense of Lelong.
\begin{definition}\rm\label{D:harmonic_current}
A {\it   harmonic current} $T$  on the foliation $\Fc$ (or  equivalently, {\it directed  by $\Fc$}) is  a  linear continuous  form
    on $\Cc^1_\Fc$ which  verifies $\ddbar T=0$ in the  weak sense (namely  $T(\ddbar f)=0$ for all $f\in  \Cc_\Fc$), and
which is  positive (namely, $T(h)\geq 0$ for all positive forms $h\in \Cc^1_\Fc$). 
\end{definition}
Suppose  now that $E$ is  a finite  set. Then 
   the  existence  of    nonzero    harmonic currents      has been  established   by Berndtsson-Sibony in \cite[Theorem 1.4]{BerndtssonSibony}, and   Forn\ae ss-Sibony   in \cite[Corollary 3]{FornaessSibony2}.
The extension of $T$ by zero through $E,$ still denoted by $T,$ is a positive  $\partial\overline\partial$-closed  current on $X.$ The  total mass of  the positive  measure $  T\wedge g_X$ is always finite.    
    
 We have the  following  decomposition  (see  \cite[Proposition  2.3] {DinhNguyenSibony1}).
\begin{proposition} \label{prop_current_local}
Let $\Fc=(X,\Lc,E)$ be  a hyperbolic  foliation with  linearizable singularities  $E.$
Let $T$ be a   harmonic  current on  $X$. Let $\U\simeq \B\times\T$ be a flow
box which is relatively compact in $X\setminus E.$ Then, there is a positive Radon
measure $\nu$ on $\T$ and for $\nu$-almost every $\alpha\in \T$ there is a
positive harmonic function $h_\alpha$ on $\B$ 
such that if $K$ is compact in $\B,$ 
the integral $\int_\T \|h_\alpha\|_{L^1(K)}d\nu(\alpha)$ is finite and
$$\langle T,\chi\rangle=\int_\T \Big(\int_\B h_\alpha(y) \chi(y,\alpha) \Big) d\nu(t)
$$
for every   form  $\chi\in\Cc^1_\Fc$ compactly  supported in $\U.$
\end{proposition}

 A  subset  $M\subset  X\setminus E$ is  said to be  {\it leafwise saturated} if $x\in M$ implies the  whole leaf $L_x$ is  contained in $M.$
  A positive finite  measure  $\mu$ on the $\sigma$-algebra of Borel sets in $X$ is  said  to be  {\it ergodic} if for every  leafwise  saturated  Borel measurable set $M\subset X,$
   $\mu(M)$ is  equal to either $\mu(X)$ or $0.$   A   harmonic  current $T$ is said to be {\it extremal}
   if  it  is an extremal point  in  the  convex  cone of  all  harmonic  currents, i.e., if there  are
harmonic  currents $T_1, T_2$  such that  $ T={T_1+T_2\over 2},$ then $T_1$ and $T_2$ are colinear.    
\begin{theorem}\label{thm_harmonic_currents_vs_measures}
Let $\Fc=(X,\Lc,E)$ be  a hyperbolic  foliation with  linearizable singularities  $E.$ \\
1)   
  The  relation  $\mu=T\wedge g_P$  is  a one-to-one  correspondence between the  convex  cone of   harmonic  currents $T$ and  
the convex  cone of  harmonic  measures $\mu$. 
\\
2) If $T$ is  extremal, then    $\mu=T\wedge g_P$ is  ergodic.
\\
3) Each  harmonic    measure $\mu$  is $D_t$-invariant, i.e, 
$$ \int_X  D_tf d\mu=\int_X fd\mu,  \qquad  f\in L^1(X,\mu).   $$
\end{theorem}
\begin{proof}  
We start with   Part 1). First observe that, for each harmonic current $T,$ the positive measure  
 $ \mu:=T\wedge g_P$ is finite  by  \cite[Proposition 4.2]{DinhNguyenSibony1}. Moreover,  it is  easy to  see that $\mu$ is harmonic.
 Consequently, the map $T\mapsto T\wedge g_P$ is one-to-one. Therefore, to complete Part 1) it suffices to show that
each harmonic measure  $\mu$ may be  written as   $\mu=T\wedge g_P$ for some harmonic current $T.$ 
To do this  we proceed as in  the proof of  \cite[Proposition 5.1]{DinhNguyenSibony1}.

To prove  Part 2),
 suppose in order to get a contradiction that $\mu$ is  not ergodic. So there is a leafwise saturated Borel set $A\subset X\setminus E$ such that 
$0<\mu(A)<1.$ Let $\mu_1:=2\mu|_A$ and  $\mu_2:=2\mu|_{X\setminus  A}.$ So  $\mu={\mu_1+\mu_2\over 2},$ and $\mu_1,$ $\mu_2$ are not  co-linear.  Using the local description of $T$ on  each flow box (see
 \cite[Proposition 2.3]{DinhNguyenSibony1}), we can show that both $\mu_1$ and $\mu_2$ are  harmonic  measures.
By Part 1), let $T_1,T_2$  be   harmonic currents such that $\mu_1:= T_1\wedge g_P$ and $\mu_2:= T_2\wedge g_P.$  This, combined  with   
  $\mu= {\mu_1+\mu_2\over 2},$ implies that  $T={T_1+T_2\over 2}$ and $T_1,T_2$  are not co-linear. 
This contradicts the extremality of $T.$


 Part 3) follows from   \cite[Theorem 6.4]{DinhNguyenSibony1}  applied to the positive $(1,1)$-form $\beta:=g_P.$
 \end{proof}

 \subsection{Measure theory on sample-path spaces}
 \label{SS:Measure_theory}
  
In this  subsection we follow the   expositions  given in Sections 2.2, 2.4 and 2.5 in \cite{NguyenVietAnh1} (see also \cite{CandelConlon2}).
The $\sigma$-algebra  generated  by a family $\Sc$ of subsets  of $\Omega$ is, by definition, the  smallest  $\sigma$-algebra  
containing  $\Sc.$ 

Let $\Fc=(X,\Lc,E)$ be  a   hyperbolic foliation endowed  with the leafwise Poincar\'e metric $g_P.$
Let  $\Omega:=\Omega(\Fc) $  be  the space consisting of  all continuous  paths  $\omega:\ [0,\infty)\to  X$ with image fully contained  in a  single   leaf. This  space  is  called {\it the sample-path space} associated to  $\Fc.$
  Observe that
$\Omega$  can be  thought of  as the  set of all possible paths that a 
Brownian particle, located  at $\omega(0)$  at time $t=0,$ might  follow as time  progresses.
For each $x \in X\setminus E,$
let $\Omega_x=\Omega_x(\Fc)$ be the  space  of all continuous
leafwise paths  starting at $x$ in $X\setminus E ,$ that is,
\begin{equation}\label{eq_Omega_x}
\Omega_x:=\left\lbrace   \omega\in \Omega:\  \omega(0)=x\right\rbrace.
\end{equation}
Garnett developed in \cite{Garnett} a  theory of leafwise  Brownian  motion in this  context  by constructing
a $\sigma$-algebra  $(\Omega,\widetilde \Ac)$ together  with a family of Wiener measures (see also \cite{Candel2, CandelConlon2}).
Now  recall briefly her construction. 
A {\it  cylinder  set (in $\Omega$)} is a 
 set of the form
$$
C=C(\{t_i,B_i\}:1\leq i\leq m):=\left\lbrace \omega \in \Omega:\ \omega(t_i)\in B_i, \qquad 1\leq i\leq m  \right\rbrace,
$$
where   $m$ is a positive integer  and the $B_i$ are Borel subsets of $X\setminus E,$ 
and $0\leq t_1<t_2<\cdots<t_m$ is a  set of increasing times.
In other words, $C$ consists of all paths $\omega\in  \Omega$ which can be found within $B_i$ at time $t_i.$
For  each point $x\in X\setminus E,$ let
\begin{equation}\label{eq_formula_W_x_without_holonomy}
W_x(C ) :=\Big (D_{t_1}(\chi_{B_1}D_{t_2-t_1}(\chi_{B_2}\cdots\chi_{B_{m-1}} D_{t_m-t_{m-1}}(\chi_{B_m})\cdots))\Big) (x),
\end{equation}
where $C:=C(\{t_i,B_i\}:1\leq i\leq m)$ as  above,  $\chi_{B_i}$
is the characteristic function of $B_i$ and $D_t$ is the diffusion operator
given  by  (\ref{eq_diffusions}).
Let  $\widetilde\Ac=\widetilde\Ac (\Fc)$ be  the $\sigma$-algebra   generated by all cylinder sets.
It can be proved that $W_x$ extends  to a probability measure on  $(\Omega,\widetilde\Ac).$
  
In the recent work \cite{NguyenVietAnh1} we  introduce  another   $\sigma$-algebra  $\Ac$ on $\Omega,$ which is bigger  than $\widetilde\Ac.$ In fact, 
$\Ac$  takes into account the  holonomy phenomenon,  whereas $\widetilde\Ac$ does  not so.  Here is our construction in the present context.
The  {\it covering foliation} $\widetilde \Fc=(\widetilde X,\widetilde\Lc)$  of  a singular foliation $\Fc$  is, in some  sense, its  universal cover.
We give  here its construction.
For  every leaf $L$ of $\Fc$ and every point  $x\in L,$  let $\pi_1(L,x)$ denotes   the first  fundamental  group of
 all continuous closed paths $\gamma:\  [0,1]\to L$ based  at $x,$ i.e. $\gamma(0)=\gamma(1)=x.$  Let   $[\gamma]\in \pi_1(L,x)$ be  the   class of   a    closed path $\gamma$ based  at $x.$
 Then the pair  $(x,[\gamma])$ represents      a point  of   $ \widetilde X.$
  Thus
the set of points  $ \widetilde X$ of $\widetilde \Fc$  is well-defined. The  leaf  $\widetilde L$ passing through a given point $(x,[\gamma])\in\widetilde X,$ is  by definition, the set
$$
 \widetilde L:=\left\lbrace (y, [\delta]):\ y\in L_x,\  [\delta]\in \pi_1(L,y)  \right\rbrace,
$$
which is the universal cover of $L_x.$
 We  put the following  topological structure on  $\widetilde X$ by describing
  a  basis of open  sets. Such a  
   basis  consists of all  sets $\Nc(U,\alpha),$
  $U$ being an open subset of  $X\setminus E$ and $\alpha:\ U\times [0,1]\to X$ being   a  continuous  function such that  $\alpha_x:=\alpha(x,\cdot)$ is a  closed  path in $L_x$ based at  $x$ for each $x\in U,$ and  
  $$
  \Nc(U,\alpha):=  \left\lbrace (x,[\alpha_x]):\ x\in U \right\rbrace.
  $$
   The projection $\pi : \widetilde X\to  X\setminus E$ is defined by $\pi(x,[\gamma]) :=x. $  It is  clear that $\pi$ is  locally homeomorphic and  is  a leafwise map.
   By pulling-back the foliation atlas $\Lc$ of $ \Fc $ as well as  the Poincar\'e metric $g_P$ via  $\pi,$   we obtain  a  natural  foliation atlas $\widetilde \Lc$ for the hyperbolic  foliation 
$  \widetilde \Fc $ endowed with the leafwise metric $\pi^*g_P.$
 Denote  by $\widetilde \Omega$ the sample-path space $\Omega( \widetilde \Fc)$ associated with the foliation   
 $ \widetilde \Fc .$  
   
Let 
$ x\in X\setminus E$ and  $\tilde x$ an arbitrary point in $\pi^{-1}(x)\subset \widetilde X.$
 Similarly as in (\ref{eq_Omega_x}), let   $\widetilde\Omega_{\tilde x}=\Omega_{\tilde x}( \widetilde\Fc)$ be the  space  of all  paths  
in $\widetilde \Omega$
starting at $\tilde x.$ 
Every  path $\omega\in \Omega_x$ lifts uniquely  to
a path $\tilde\omega\in \widetilde\Omega_{\tilde x}$  in the sense  that $\pi\circ \tilde\omega=\omega.$
In what follows this bijective lifting  is  denoted by $\pi^{-1}_{\tilde x}:\  \Omega_x\to \widetilde\Omega_{\tilde x}.$ So  $\pi\circ (\pi^{-1}_{\tilde x}(\omega))=\omega,$
 $\omega\in \Omega_x.$ 
\begin{definition} \label{defi_algebras_Ac} Let   
  $ \Ac=\Ac(\Fc)$ be  the  $\sigma$-algebra generated by
all sets  of  following family
 $$ \left  \lbrace \pi\circ \tilde A:\ \text{cylinder set}\ \tilde A\  \text{in} \ \widetilde \Omega    \right\rbrace,$$ 
  where  $\pi\circ \tilde A:= \{ \pi\circ \tilde \omega:\ \tilde\omega\in \tilde A\}.$
    \end{definition}
 Observe that    $\widetilde \Ac\subset  \Ac$ and that the  equality holds if  every   leaf  of  the foliation is
homeomorphic to the  disc $\D.$ 
 Now  we  construct  a family  $\{W_x\}_{x\in M\setminus E}$ of probability Wiener measures   on $(\Omega,\Ac).$ 
  Let $x\in X\setminus E$ and 
$C$ an element of $\Ac.$ Then  we  define the  so-called {\it  Wiener measure}  $W_x$ by the following formula
\begin{equation}\label{eq_formula_W_x}
 W_x(C):= W_{\tilde x}( \pi^{-1}_{\tilde x} C),
  \end{equation}
  where   
   $\tilde x$ is  an arbitrary point in $\pi^{-1}(x),$ and  
   $$\pi^{-1}_{\tilde x}C:=\left\lbrace \pi^{-1}_{\tilde x}\omega:\  \omega\in C\cap \Omega_x\right\rbrace,
$$
and  $ W_{\tilde x}$ is the  probability measure  on  $(\widetilde \Omega,\widetilde \Ac(\widetilde \Fc)) $ which was  defined by (\ref{eq_formula_W_x_without_holonomy}).
Given  a positive  finite Borel  measure $\mu$  on $X\setminus E,$  consider the   measure  $\bar\mu$ on $(\Omega,\Ac)$
defined by   
   \begin{equation}\label{eq_formula_bar_mu}
   \bar\mu(A):=\int_X\left ( \int_{\omega\in A\cap   \Omega_x}  dW_x \right ) d\mu(x),\qquad  A\in\Ac. 
\end{equation}
The  measure $\bar\mu$ is  called  the  {\it Wiener measure with initial distribution $\mu$.}
 Here  are its  important  properties.
\begin{proposition}\label{prop_Wiener_measure}
We keep the above  hypotheses and notation. \\
(i)    The value  of $W_x(C)$ defined  in (\ref{eq_formula_W_x}) is  independent of the choice
 of $\tilde x.$  Moreover, $W_x$ is a  probability  measure     
on $(\Omega,\Ac).$
\\
(ii)   $\bar\mu$ given in (\ref{eq_formula_bar_mu}) is  a positive finite  measure on $(\Omega,\Ac)$ and
$\bar\mu(\Omega)=\mu(X\setminus E).$
\\
(iii) If  $\mu$ is harmonic, then  $\bar\mu$ is  time-invariant, that is,
$$
\int_\Omega F(\sigma_t(\omega))d\bar\mu(\omega)=\int_\Omega F(\omega)d\bar\mu(\omega),
$$
for all $t\in\R^+$ and $F\in L^1(\Omega,\bar\mu),$  where 
  the   shift-transformation
  $\sigma_t:\  \Omega\to\Omega$ is defined by 
   \begin{equation}\label{eq_shift}  \sigma_t(\omega)(s):=\omega(s+t),\qquad  \omega\in \Omega,\  s\in\R^+.
   \end{equation}
  \end{proposition}     
  \begin{proof}
 Assertion (i)  has been proved in    \cite[Theorem 2.15]{NguyenVietAnh1}. Assertion (ii)  has been established in
  \cite[Theorem 2.16]{NguyenVietAnh1}.
  
 By Part 3) of Theorem \ref{thm_harmonic_currents_vs_measures},
$\mu$ is  $D_t$-invariant  for all $t\in\R^+.$
 Consequently,  applying  \cite[Theorem  2.20]{NguyenVietAnh1} to $\mu$ yields that $\bar\mu$ is time-invariant.
\end{proof}

\subsection{Holonomy cocycles}
\label{SS:holonomy_cocycle}

Now  we define  the holonomy cocycle  of a hyperbolic foliation $\Fc=(X,\Lc,E)$ on a Hermitian  complex surface $X.$
For each point $x\in X\setminus E,$ let $T_x(X)$ (resp.  $T_x(L_x)\subset T_x(X)$) be    the tangent space  of $X$ (resp. $L_x$) at $x.$
 For every transversal    $S$ at a point $x$ (that is, $x\in S$), let  $T_x(S)$ denote the tangent space of $S$ at $x.$  

Now fix a point $x\in X\setminus E$ and a path $\omega\in\Omega_{x}$  and a time $t\in \R^+,$ and  let $y:=\omega(t).$
Fix  a transversal $S_x$ at $x$ (resp. $S_y$  at $y$) such that the complex line
$T_{x}(S_x)$ is the  orthogonal complement  of the complex line $T_{x}(L_{x})$ in the Hermitian space $(T_x(X),g(x))$
(resp.        $T_{y}(S_y)$ is the  orthogonal complement  of $T_{y}(L_{y})$ in $(T_{y}(X),g(y))$). 
Let  $\hol_{\omega,t}$  be the  holonomy map along the path  $\omega|_{[0,t]}$ from   an open  neighborhood of $x$ in $S_x$   onto
 an open  neighborhood of $y$ in $S_y.$     The  derivative  $D \hol_{\omega,t}:\
T_{x}(S_x)\to T_{y}(S_y) $ induces 
the so-called {\it holonomy cocycle} $\mathcal H:\ \Omega\times\R^+\to\R^+$ given by  $$\mathcal H(\omega,t):=
  \|D \hol_{\omega,t}(x)\|.$$ 
The last  map depends  only on the path  $\omega|_{[0,t]},$ in fact, it
depends only on the  homotopy class of  this path.
 In particular, it is  independent of   the choice  of transversals  $S_x$ and $S_y.$ 
 We see easily that
 $$
\mathcal H(\omega,t) =\lim_{z\to x,\ z\in S_x} \dist(\hol_{\omega,t}(z), y)/ \dist(z,x).
  $$
 On the other hand, we note the following additive property   which  is  an immediate consequence  of the  definition of $
\mathcal H(\omega,t)$ (see also   \cite[Proposition 3.3]{NguyenVietAnh1}):
\begin{equation} \label{eq_multiplicative_cocycles}
\log {\| \mathcal{H}(\omega,t+s)   \| }
= \log {\| \mathcal{H}(\omega,t)   \| }+\log {\| \mathcal{H}(\sigma_t(\omega),s)   \| },\qquad t,s\in\R^+, \omega\in\Omega,
\end{equation}
  where  $\sigma_t:\  \Omega\to\Omega$ is the   shift-transformation
    given by 
\eqref{eq_shift}.

 \section{Holonomy cocycle vs Poincar\'e metric}
  \label{S:holonomy_cocycle} 
 In this  section let $\Fc=(X,\Lc,E)$    be  
a   holomorphic Brody hyperbolic foliation   with     linearizable  singularities $E$ in a   Hermitian compact complex surface $X.$
 Let $\mathcal H$ be the holonomy cocycle of the foliation. In order to study the behavior of $\mathcal H$ near a (hyperbolic) singular point, 
  we  use the local model  $(\D^2,\Lc,\{0\})$ introduced in  Subsection   \ref{SS:local_model}.  This  is  the restriction to $\D^2$ of the foliation associated with the vector field $$F(z,w) = z {\partial\over \partial z}
+ \lambda w{\partial\over \partial w}\qquad\text{with some complex number}\qquad \lambda\not =0.$$ 
Since  the  main results of the article do not depend on the choice of a Hermitian metric on $X,$  we  can fix  a metric  which is  equal  to the Euclidean one in  each singular  flow box.
This  will  simplify  our presentation.

For $x=(z,w)\in \D^2\setminus\{0\}$,  the holomorphic map $\psi_x:\Pi_x\rightarrow\D^2\setminus\{0\}$
given by (\ref{eq_leaf_equation}) may be  rewritten as 
\begin{equation}\label{eq_leaf_equation_new}
\psi_x(\zeta):=\Big(ze^{\zeta},we^{\lambda\zeta}\Big)\quad \mbox{for}\quad \zeta\in\Pi_x.
\end{equation}
\begin{proposition}\label{P:holonomy} Let $\D^2$ be endowed with the  Euclidean metric.
For each  $x=(z,w)\in\D^2$ and $\zeta\in \Pi_x,$ consider a path $\omega\in \Omega$ (if it exists) such  that
$$
\omega(t)= \psi_x(t\zeta)= (z e^{\zeta t},we^{\lambda\zeta t})\subset \D^2
$$
for all $t\in [0,1]$ (see \eqref{eq_leaf_equation_new} above). Then 
$$
\mathcal H(\omega,1)=|e^\zeta||e^{\lambda \zeta}|{\sqrt{|z|^2+|\lambda w|^2}\over \sqrt{ |ze^\zeta|^2+|\lambda we^{\lambda \zeta}|^2           }}. 
$$
\end{proposition}
  \begin{proof} Let  $ y:= \omega(1)=(z e^{\zeta},we^{\lambda\zeta}).$
  Since  the vector  $(z,\lambda w)$ is  tangent to the leaf $L_x$  at $x ,$ 
  the vector $N_x:=(-\bar \lambda\bar w,\bar z)$ is  normal to $L_x$ at $x,$ and hence,
the complex normal line $S_x$  to  $L_x$  at $x$ is the set   
$$\{x+s\cdot N_x:\   s\in \C\}=  \{(z-\bar \lambda\bar w s, w+\bar zs):\   s\in \C\}.$$
Similarly, let $N_y:= (-\bar \lambda\bar we^{\bar\lambda\bar \zeta},\bar ze^{\bar \zeta})$
be  the vector normal to $L_y$ at $y,$ and let $S_y:=\{ y+s\cdot N_y:\ s\in\C\}$ be the complex normal line   to  $L_y$  at $y.$  
Since $N_x$  (resp. $N_y$) may be regarded,  in a sufficiently small open neighborhood of $x$ (resp. $y$), as a transversal,
we can describe the holonomy map $\hol_{\omega,t}$ using them. Indeed,   
  for  each $s\in \C$ with $|s|$ small enough, we  want to find $\xi\in\C$ close to $\zeta$  such that
 $\big ((z-\bar \lambda\bar w s)e^{\xi},( w+\bar zs)e^{\lambda\xi}\big)$ belongs to  $S_y.$
This  is  equivalent  to the fact that the following  two vectors   
 $$
V_s:= \Big ((z-\bar \lambda\bar w s)e^\xi -ze^\zeta,  (w+\bar zs)e^{\lambda\xi}-  we^{\lambda\zeta}\Big)\ \text{and}\  
  N_y$$
  are colinear.
    Write  $\xi=\zeta+as+O(s^2).$ So
  $
  e^\xi= e^\zeta(1+as+O(s^2))
  $  and $e^{\lambda\xi}=e^{\lambda\zeta}(1+\lambda as+O(s^2)).$
  In order  to determine $a,$
   we insert the last two identities into the expression of $V_s$ and get that
  \begin{equation}\label{eq_V_s}
  V_s= s\cdot \Big ((za-\bar \lambda\bar w)e^\zeta,  (\bar z+a\lambda w)e^{\lambda\zeta}\Big)+O(s^2).
  \end{equation}
So the above colinearity condition reduces to the colinearity of the following two vectors
$$
\Big ((za-\bar \lambda\bar w)e^\zeta,  (\bar z+a\lambda w)e^{\lambda\zeta}\Big)
\ \text{and}\  
 (-\bar \lambda\bar we^{\bar\lambda\bar \zeta},\bar ze^{\bar \zeta}).
$$
Solving this equation yields that
$$
a={  \bar\lambda \bar z\bar w  (|e^\zeta|^2- |e^{\lambda\zeta}|^2)\over  |z|^2|e^\zeta|^2+|\lambda|^2|w|^2 |e^{\lambda\zeta}|^2} .
$$
Recall  that $T_x(S_x)=S_x$ is orthogonal to  $T_x(L_x)$ at $x =\omega(0)$
and  $T_y(S_y)=S_y$ is orthogonal  to $T_y(L_y)$ at $y= \omega(1).$ Moreover, 
 $x+s\cdot N_x$ and  $y+V_s$  are on the same leaf for all $s\in\C$ with $|s|$ small enough.
Consequently,
a geometric argument shows that
$$
\mathcal H(\omega,1)=\lim_{s\to 0}  \| V_s\|/ \| s\cdot N_s\|= { \Big\| \Big ((za-\bar \lambda\bar w)e^\zeta,  (\bar z+a\lambda w)e^{\lambda\zeta}\Big)\Big\| \over  \|(-\bar \lambda\bar w,\bar z)\|  },  
$$
where the last  equality holds by (\ref{eq_V_s}).
Inserting   the above value  of $a$ into the  last expression, a straightforward calculation gives  the  desired result.
  \end{proof}
  
 Now  we define  a new variant of  Poincar\'e ``distance" $\dist_P$   which  takes into account the holonomy phenomenon.  
  Let    $\omega\in\Omega$ and $0\leq t\leq  s.$ Put $x:=\omega(t)$ and $y:=\omega(s).$
  Let $\phi_x:\ \D\to  L_x$ be a universal covering map with $\phi_x(0)=x.$  The  path  $ [0, s-t]\ni r \mapsto\omega(t+r)$
is lifted  by $\phi_x$ to a continuous path $\beta:\  [0, s-t]\to\D$   such that $\beta(0)=0.$ Let $\tau:= \beta(s-t)\in\D.$
So $\phi_x(\tau)=\omega(s)=y.$
  Now  we are in the position to define  the new Poincar\'e function
  \begin{equation}\label{e:distance}
  \dist_P(\omega:t,s):=  \dist_P(0, \tau)=\log\left( {1+|\tau|\over 1-|\tau|}\right) ,
  \end{equation}
 where on the right hand side  $\dist_P$ is the usual Poincar\'e distance  on $\D.$  Note that $
  \dist_P(\omega:t,s)$ is  independent  of the choice   of $\phi_x.$ Moreover, it is  uniquely determined 
  by   $x=\omega(t),$ $y=\omega(s)$ and  the  homotopy class (two end-points  being fixed) of the path  $ [0, s-t]\ni r \mapsto\omega(t+r).$  There is  exactly one  homotopy class for which 
$\dist_P(\cdot:t,s)$ coincides with  $\dist_P(x,y).$

 The following lemma  shows us how  deep a leaf can go into  a singular flow  box before the hyperbolic  time $R.$
\begin{lemma}\label{lem_go_deeper}
There is a constant $c>0$ with the following property.  Let  $\omega\in \Omega$ be such that
$\omega[0,1]\subset (1/2 \D)^2 $ and that $\omega[0,1]$ is (locally) geodesic with respect to the leafwise Poincar\'e metric $g_P.$
Write $(z,w):=x=\omega(0)$ and $R:=\dist_P(\omega:0,1).$
Then there  exists  $\zeta\in \Pi_x$  (see \eqref{eq_leaf_equation_new} above) such that 
  $    \omega(1)   =      (z e^{\zeta },we^{\lambda\zeta }) $
  and that
  $$|\zeta|\leq  e^{c R}|\log \|x\||.$$
 \end{lemma}
\begin{proof}
First  we show  that there is  $r>0$ such that if $R=\dist_P(\omega:0,1)\leq r$
then  there  exists  $\zeta\in \Pi_x$   such that 
  $    \omega(1)   =      (z e^{\zeta },we^{\lambda\zeta }) $
  and that
  \begin{equation}\label{eq_case_easy}
|\zeta|\leq {|\log \|x\||\over 2|\lambda|} .
\end{equation}
Indeed, let $\omega\in\Omega$ be a path  such that 
\\
$\bullet$ $\omega[0,1]$ is locally geodesic;
\\
$\bullet$  
 for all $t\in[0,1],$
  $\omega(t):= (z e^{\zeta(t) },we^{\lambda\zeta(t) })\in (1/2 \D)^2  ;$
  \\
  $\bullet$  
     $\zeta(0)=0$  and   $|\zeta(t)|\leq {|\log \|x\||\over 2|\lambda|}  $ for all $t\in[0,1]$
     and
  $|\zeta(1)|= {|\log \|x\||\over 2|\lambda|}  .$ 
  
We only need to show that $R=\dist_P(\omega:0,1)\leq r$ for some $r>0$ independent of $\omega.$
Indeed, it follows from  the second  and third $\bullet$ above  that  $|\log \| \omega(t)\||\approx  |\log\|x\||$ for $t\in[0,1].$ 
Therefore,  by integrating  along  the  path $[0,1]\ni t\mapsto \zeta(t)$ 
and using  the first $\bullet$ above, and  
applying Part 2) of  Lemma  \ref{lem_poincare}, we get that
\begin{multline*}
\dist_P(\omega:0,1)=\int_{\omega[0,1]} \sqrt{g_P(z)} = \int_0^1
\zeta_t^*(\psi^*_x  (\sqrt{g_P})) \\
 \geq c_1 \int_0^{|\log \|x\||\over 2|\lambda|}  |\log\|x\||^{-1} ds= {c_1\over 2\lambda}=:r, 
\end{multline*}
 where $c_1>0$ is  a constant. This proves (\ref{eq_case_easy}).

Next, we  prove  the lemma  for  a general $R>0.$
Suppose  without loss of generality that $r=1.$
Let $0=t_0<\cdots<t_n=1$ be  a  subdivision  of $[0,1]$ such that  $\dist_P(\omega:t_j,t_{j+1})\leq 1$ for each $0\leq j\leq n-1$
and that $n$ is  as smallest  as possible. So $n$ is  the smallest integer  $\geq R.$
Let $x_j:=\omega(t_j).$ So $x_0=\omega(0)=x=(z,w).$ Applying (\ref{eq_case_easy}) repeatedly,  we obtain, for each $0\leq j\leq n,$
 $\zeta_j\in\C$ and  $x_j=(z_{j},w_{j})\in  (1/2 \D)^2 $ such that
$x_{j+1}=(z_{j}e^{\zeta_j},w_{j} e^{\lambda \zeta_j})$ for $0\leq j<n$ and that $ |\zeta_j|\leq  c_2|\log \|x_j\||.$
So $ |\log \|x_{j+1}\||\leq  c_3|\log \|x_j\||$ for some constant $c_3>1$ which depends only on $c_2$ and $\lambda.$
Thus, 
$$|\log \|x_j\||\leq c_3^j |\log \|x\||\quad\text{and}\quad
 |\zeta_j|\leq  c_2 c_3^j |\log \|x\||
.$$ Writing $
\omega(1)=x_n= (z_{0}e^\zeta, w_{0} e^{\lambda\zeta})=(ze^\zeta, w e^{\lambda\zeta})$   with 
$\zeta:= \zeta_1+\cdots+\zeta_{n-1}
$
 and using the last  estimate,  the  desired  conclusion of the lemma follows.
\end{proof}

 The  following   result gives  an estimate on  the  expansion rate of $\mathcal H(\omega,\cdot)$ in terms of the Poincar\'e 
function  $\dist_P(\omega: \cdot,\cdot)$
  and  the  distance  $\dist(\omega(0),E).$  
\begin{proposition}\label{prop_expansion_rate}
  There  is a  constant $c>0$ such that
 $$  \big |\log {\| \mathcal{H}(\omega,t)   \|} \big |
 \leq  c \lof \dist(\omega(0),E)\ \cdot\  \exp{\Big (c\, \dist_P(\omega: 0,t)\Big)} ,\qquad
 \omega\in\Omega,\ t\in\R^+.
$$  
\end{proposition}
\begin{proof}  We may  suppose without loss of generality  that $t=1.$
  Let $\omega\in\Omega,$ and  put $x:=\omega(0)$ and $y:=\omega(1).$
 Since   $\mathcal{H}(\omega,1)$ depends only on the homotopy class of the  path $\omega|_{[0,1]},$    we may  assume  without loss of generality that the segment $\omega[0,1]$ is  (locally)  geodesic with respect to the Poincar\'e metric  on $L_x.$
Let $\Uc$ be the    finite  cover   of $M$  by  regular  and  singular  flow boxes  given in Subsection \ref {SS:local_model}.
We consider  three steps.\\
{\bf Step   1:}  {\it If  there is a singular  flow  box $\U$  which  contains  the whole segment $\omega([0,1]),$ then the proposition is  true for  $c=c_1,$
where $c_1>0$ is  a constant large enough.}

Write $x=(z,w)$ and $y:=\omega(1).$
 Let $R:= \dist_P(\omega:0,1).$ By Lemma \ref{lem_go_deeper}, we may write
$y =(ze^\zeta,we^{\lambda\zeta})$ for some $\zeta\in\C$   such that
 $$|\zeta|\leq  e^{c_2R}.$$
 Inserting   this  into the  expression  for the holonomy map given in Proposition \ref{P:holonomy},
a straightforward computation shows  that
$$
\big |\log {\| \mathcal{H}(\omega,1)   \|} \big |
 \leq  c_3  |\log \|x\||  e^{c_3R} 
$$  
  for a constant $c_3>0$ independent of $\omega.$ Choosing $c_1>c_3$ large enough, Step 1 follows from the last estimate.

\noindent
{\bf Step   2:}  {\it   If  the whole segment $\omega([0,1])$
is contained  in a single   regular  flow  box $\U\in\Uc,$  then $\big | \log {\| \mathcal{H}(\omega,1)   \|}  \big|\leq c_4,$
 where $c_4>0$ is a constant independent of $\omega.$ In particular, the proposition  is true in this  case for $c=c_1,$ where
 $c_1>0$ is  a constant large enough.}
 
 Observe  that the geodesic segment  $\omega[0,1]$ is contained in the unique  plaque  of $\U$     which   passes through $x.$
 This, combined   with  the  description of the  holonomy map  on $\U,$ implies that $\| \mathcal{H}(\omega,1)   \|\leq  e^{c_4}$
 for a  constant $c_4>0$ independent of $\omega.$ Hence, $\big | \log {\| \mathcal{H}(\omega,1)   \|}  \big|\leq c_4.$
Therefore, choosing $c_1>c_4$ large  enough,   we have that 
$$c_1 \lof \dist(\omega(0),E)\geq c_4\geq  \big | \log {\| \mathcal{H}(\omega,1)   \|}  \big|.$$
This  proves the proposition  in  Step  2.
  
 \noindent
{\bf Step  3:}  {\it  Proof of the proposition in the general case.}

Consider the family  of all   finite subdivisions of 
 $[0,1]$ into  intervals $[t_{j-1},t_j]$  with  $1\leq j\leq n$  such that
 $t_0=0,$ $t_n=1$ and that   each segment  $\omega([t_{j-1},t_j])$ is  contained in a single (regular or singular) flow box $\U_j$ for each $j$.
 Fix a  member of this  family  such  that the  number $n$ is  smallest possible.
 We may assume  without loss of generality that $n>1$ since     the  case  $n=1$ follows  either from  Step 1 (if
 $\U_1$  is  singular)  or from Step 2 (if $\U_1$ is  regular).
The minimality of $n$ implies that all $\omega(t_1),\ldots\omega(t_{n-1})$ belong to the union of all regular  flow boxes of $\Uc.$
Therefore,  there is a constant $r_0>0$ independent of $\omega$ such that 
$$
  \dist_P(\omega:t_j,t_{j+1})\geq  r_0,\qquad 1\leq j\leq n-1. 
$$
 Thus  
\begin{equation}\label{eq_n}
n\leq  1+   r_0^{-1} \dist_P(\omega: 0,1).
\end{equation}
Moreover,
 there is a constant $c_5>1$ independent of $\omega$ such that 
$$
1\leq \lof \dist(\omega(t_j),E)\leq  c_5,\qquad 1\leq j\leq n-1. 
$$
 Using this and  applying Step 1 to each singular box in the family   $(\U_j)_{j=1}^n$ and  applying Step 2 to each regular 
 flow  box in  the above family, we obtain that
 \begin{eqnarray*}
\big | \log {\| \mathcal{H}(\omega,t_1)   \| }\big|
 &\leq & c_1 \lof \dist(\omega(t_0),E)\cdot\exp{\Big (c_1 \dist_P(\omega:t_0,t_1)\Big)} ,\\
\big |\log {\| \mathcal{H}(\sigma_{t_{j-1}}(\omega),t_j-t_{j-1})} \big |&\leq&   c_1c_5\exp{\Big (c_1\dist_P(\omega:t_{j-1},t_j)\Big)}   ,\quad 2\leq j\leq n.
\end{eqnarray*}
 Summing  up the above  estimates,  we  get that
 \begin{multline*}
 \sum_{j=1}^n\big|\log {\| \mathcal{H}(\sigma_{t_{j-1}}(\omega),t_j-t_{j-1})   \| }\big|\leq  c_1 \lof \dist(\omega(t_0),E)\cdot\exp{\Big ( c_1 \dist_P(\omega:t_0,t_1)\Big)}\\
+\sum_{j=2}^n c_1c_5\exp{\Big (c_1 \dist_P(\omega:t_{j-1},t_j)\Big)}.
\end{multline*}
 On the other hand, we infer from   (\ref{eq_multiplicative_cocycles}) that  
 $$
\big |\log {\| \mathcal{H}(\omega,1)   \| }\big|
= \sum_{j=1}^{n}  \big| \log {\| \mathcal{H}(\sigma_{t_{j-1}}(\omega),t_{j}-t_{j-1})   \| }\big|.
$$
 This,  coupled   with the previous estimate, gives that  
  \begin{equation}\label{eq_two_sums_new}
  \begin{split}
\big| \log {\| \mathcal{H}(\omega,1)   \| }\big| &\leq c_1\lof \dist(\omega(t_0),E)\cdot\exp{\Big ( c_0 \dist_P(\omega:t_0,t_1)\Big)}\\
&+
\sum_{j=2}^n c_1c_5\exp{\Big (c_1 \dist_P(\omega:t_{j-1},t_j)\Big)}.
\end{split}
\end{equation}
  Since    $
 \lof \dist(x,E)\geq 1$ for all $x\in M\setminus E,$   the right hand side of the last line is  dominated by
a constant times $\lof \dist(\omega(t_0),E)$ times
$$
\sum_{j=1}^n  \exp{\Big( c_1 \dist_P(\omega:t_{j-1},t_j)\Big)}\leq   n\cdot \exp{\Big (c_1 \dist_P(\omega:0,1)\Big)},
$$
where the last inequality holds because  of the identity
 $$
  \dist_P(\omega:0,1)=\sum_{j=1}^n\dist_P(\omega:t_{j-1},t_j).
  $$
  Inserting (\ref{eq_n}) into the right hand side  of the last inequality and  choosing $c>c_1$ large enough, 
we find that its left hand  side  is bounded by   $c\, \exp{\Big (c\, \dist_P(\omega:0,1)\Big)}.$
So  the  right hand  side   of (\ref{eq_two_sums_new}) is  also bounded by a constant times $\lof \dist(\omega(t_0),E)\cdot \exp{\Big (c \,\dist_P(\omega:0,1)\Big)},$ and the proof is  thereby completed.
\end{proof}

\section{Proof of  the  main results modulo the integrability condition} \label{section_Main_Theorem_1}
 
  This  section is  devoted to the proofs of Theorem    \ref{thm_main} and Corollary \ref{C:extremal}  modulo the integrability  condition \eqref{e:necessary_integrability}, i.e., modulo
  Theorem \ref{thm_main_2}. 
 We need  the  following result. 
\begin{lemma}\label{lem_Candel} There is a  constant $c>1$ such that for all  $x\in M\setminus E$ and all $s\geq 1,$
 $$ W_x\left\{\omega\in\Omega :\  \sup_{t\in[0,1]}\dist_P(\omega:0, t)>s \right \}< ce^{-c^{-1}s^2}.$$
\end{lemma}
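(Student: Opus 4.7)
The plan is to reduce the estimate to a uniform Gaussian upper bound on the heat kernel of each leaf equipped with the metric $g$, and then pass from the tail bound at a single time to a bound on the supremum by a stopping-time argument.

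First I would lift the problem to the universal cover. By the construction of $W_x$ in \eqref{eq_formula_W_x}, one has $W_x = (\pi|_{\widetilde\Omega_{\tilde x}})_\ast W_{\tilde x}$ for any $\tilde x\in\pi^{-1}(x)$, and $\pi : (\widetilde L_x,\pi^* g)\to (L_x,g)$ is a local isometry, hence distance-decreasing: for every $\tilde\omega\in\widetilde\Omega_{\tilde x}$,
\[
\dist_g\bigl(\pi(\tilde\omega)(0),\pi(\tilde\omega)(t)\bigr)\ \le\ \dist_{\pi^* g}\bigl(\tilde\omega(0),\tilde\omega(t)\bigr).
\]
So it suffices to prove the same estimate for Brownian motion on $(\widetilde L_x,\pi^* g)$. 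By condition (ii) of Definition \ref{defi_accelerating_metrics}, $(\widetilde L_x,\pi^* g)$ is complete and its geometry (injectivity radius bounded below, curvature bounded) is uniform in $x$.

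Under this uniform bounded geometry, the heat kernel satisfies a uniform Gaussian upper bound of Cheeger--Gromov--Taylor / Li--Yau type: there exist constants $A,B>0$, independent of $x$, such that for all $t\in(0,1]$ and all $\tilde y\in\widetilde L_x$,
\[
p_{\widetilde L_x}(\tilde x,\tilde y,t)\ \le\ A\,t^{-1}\,\exp\!\Bigl(-\tfrac{\dist_{\pi^* g}(\tilde x,\tilde y)^2}{B\,t}\Bigr).
\]
Combining this with the (uniform) volume comparison that bounded geometry supplies for balls of bounded radius, I would integrate over the complement of the $g$-ball of radius $s/2$ centred at $\tilde x$ to obtain a fixed-time tail bound of the form $W_{\tilde x}\{\dist_{\pi^* g}(\tilde x,\tilde\omega(t))>s/2\}\le C_1 e^{-s^2/C_2}$ for $t\in(0,1]$ and $s\ge 1$, with $C_1,C_2>0$ independent of $x$.

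Next I would upgrade this to a supremum bound. Set $\tau_s:=\inf\{t\in[0,1]:\dist_{\pi^* g}(\tilde x,\tilde\omega(t))\ge s\}$. On the event $\{\tau_s<1\}$, the strong Markov property at $\tau_s$ applied to the tail estimate above (used now from the stopped position, with remaining time $1-\tau_s\le 1$) shows that with conditional probability at least $1/2$ one has $\dist_{\pi^* g}(\tilde\omega(\tau_s),\tilde\omega(1))\le s/2$, whence $\dist_{\pi^* g}(\tilde x,\tilde\omega(1))\ge s/2$. The standard reflection/Lévy-type bookkeeping then yields
\[
W_{\tilde x}\bigl\{\sup_{t\in[0,1]}\dist_{\pi^* g}(\tilde x,\tilde\omega(t))>s\bigr\}\ \le\ 2\,W_{\tilde x}\{\dist_{\pi^* g}(\tilde x,\tilde\omega(1))>s/2\}\ \le\ 2C_1 e^{-s^2/C_2}.
\]
For $s\ge 1$ this is of the desired exponential form, and after absorbing the extra constants into $c_0$ (rescaling the variable $s$ if necessary to match the Gaussian rate) one obtains the stated inequality.

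The main obstacle is to secure uniformity of all the constants across all $x\in M\setminus E$. The only tool that provides this is condition (ii) of the accelerating metric: without bounded geometry of the universal covers the heat-kernel bound, the volume comparison, and the strong Markov tail estimate would all depend on $x$, and one would lose the single constant $c_0$ the lemma asserts.
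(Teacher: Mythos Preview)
Your argument is correct and follows the same line as the paper: lift to the universal cover via a uniformization $\phi_x:\D\to L_x$, use that $(\D,\phi_x^*g)$ is complete with uniformly bounded geometry (Definition~\ref{defi_accelerating_metrics}~(ii)), and deduce the Gaussian tail for the radial maximum of Brownian motion. The paper simply invokes Lemma~8.16 and Corollary~8.8 of Candel~\cite{Candel2} at this point, whereas you have sketched their content (Gaussian heat-kernel upper bound from bounded geometry, then the strong-Markov/reflection upgrade from a fixed-time tail to the running supremum); so the route is the same, only more explicit. One small remark: your argument naturally produces $c_0e^{-s^2/C_2}$ rather than $c_0e^{-s^2}$, and this cannot be fixed by ``rescaling $s$''; but any Gaussian rate suffices for the application in \eqref{eq_uniform_estimate}, and the cited results in \cite{Candel2} likewise yield $e^{-\alpha s^2}$ for some $\alpha>0$.
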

\begin{proof} Let $\phi_x:\D\to L_x$ be a universal covering map  with $\phi_x(0)=x$. 
We have to show that
$$
 W_0\left\{\omega\in\Omega(\D) :\  \sup_{t\in[0,1]}\dist_P(\omega(0),\omega(t))>s \right \}< ce^{-c^{-1}s^2}
$$
 where  $W_0$
is  the Wiener  measure  at $0$ of  the unit disc $\D $ endowed with the Poincar\'e metric  $g_P, $ and $\dist_P(\cdot,\cdot)$ is the  Poincar\'e distance.  
Since  the Poincar\'e metric   is   complete and of    bounded  geometry,
 the  last  estimate
 holds  by combining  \cite[Lemma 8.16 and Corollary 8.8]{Candel2}.  
\end{proof}

Now we arrive  at the   
 \\
\noindent {\bf End of the proof of  Theorem    \ref{thm_main} modulo the integrability  condition \eqref{e:necessary_integrability}.}  
 By  Proposition  
 \ref{prop_expansion_rate},  we  get a constant $c_1>0 $ such that 
   \begin{equation*}
   \Ic(\omega)\leq  c_1\mathcal G(\omega) ,
   \end{equation*}
   where  the  function  $\mathcal G  :\Omega\to  \R^+$  is  given by
  \begin{equation*}
 \mathcal G(\omega):= \lof \dist(\omega(0),E) \cdot \exp{\Big(c_1\cdot\sup_{t\in[0,1]}\dist_P(\omega: 0,t)\Big)} ,\qquad  \omega\in\Omega. 
\end{equation*}  
Consequently, we only need  to show that  $\mathcal G$ is $\bar\mu$-integrable.

To  do this  we  write   using  formula (\ref{eq_formula_bar_mu}) 
\begin{equation}\label{eq_integral_estimate}
\int_\Omega \mathcal G(\omega)d\bar\mu(\omega) =\int_X  \lof \dist(x,E)  \cdot \Big (\int_{\Omega_x} \exp{\Big(c_1\cdot\sup_{t\in[0,1]} \dist_P(\omega:0,1)\Big)}d W_x(\omega) \Big )d\mu(x).
\end{equation}
 Next, we will show  that   the inner integral is  uniformly bounded  by  a constant $c_2>0$  independent  of $x,$  that is,
\begin{equation}\label{eq_uniform_estimate}
\int_{\Omega_x} \exp{\Big(c_1\cdot\sup_{t\in[0,1]} \dist_P(\omega:0,1)\Big)}d W_x(\omega)  <c_2.
\end{equation}
 To this  end 
  we focus on a single leaf $L$ of $\Fc$  passing through a  given point $x\in X\setminus E.$ Observe  that
 \begin{multline*}
\int_{\Omega_x}  \exp{\Big(c_1\cdot\sup_{t\in[0,1]} \dist_P(\omega:0,1)\Big)}  dW_x(\omega)\\=  \int_0^\infty  W_x\left\{\omega\in \Omega_x:\ \exp{\Big(c_1\cdot\sup_{t\in[0,1]} \dist_P(\omega:0,1)\Big)}>s  \right\} ds. 
\end{multline*}
   The  integrand on the  right-hand side is  equal  to 
 $$W_x\left\{\omega\in \Omega_x:\  \sup_{t\in[0,1]} \dist_P(\omega:0,1)> \log s/c_1 \right \}.$$
 For $0\leq  s\leq e^{c_1},$ this  quantity is  clearly  $\leq 1$ since $W_x$ is  a  probability measure by  Proposition \ref{prop_Wiener_measure}
 (i). 
 For $s\geq e^{c_1},$ this  quantity is  dominated, thanks to  
 Lemma \ref{lem_Candel},  by  $c_3 \exp{\Big ( - c_3^{-1}\big({   \log s\over c_1}\big)^2\Big)}$ for some  constant $c_3>0.$
 Since  $\int_{e^{c_1}}^\infty    \exp{\Big ( - c_3^{-1}\big({   \log s\over c_1}\big)^2\Big)} ds<\infty,$
   we have  established (\ref{eq_uniform_estimate}). 
 
 We infer from \eqref{eq_integral_estimate} and \eqref{eq_uniform_estimate} that
  \begin{equation*} 
 \int_\Omega \mathcal G(\omega)d\bar\mu(\omega)\leq c_2\int_X\lof \dist(x,E)d\mu(x)  .
 \end{equation*}
 By   assumption \eqref{e:necessary_integrability}, the integral on the  right hand is is finite.
 Hence,  the proof of the  theorem  is  complete.
 \hfill $\square$ 
 
 

\noindent  {\bf End of the proof of   Corollary \ref{C:extremal} modulo the integrability  condition \eqref{e:necessary_integrability}.} 
Using Theorem    \ref{thm_main}, we may apply  \cite[Theorem 3.7]{NguyenVietAnh1} to the  holonomy cocycle $\mathcal H$
of rank $1.$ Consequently,  we obtain   a unique  Lyapunov  exponent  function  $\lambda(T):\ X\to\R$
 which  is  measurable and  leafwise constant and  which,  for $\mu$-almost  every $x\in X,$ satisfies
 $$
\lim_{t\to\infty} {1\over t}\log\|\mathcal H(\omega,t)\|= \lambda(T)(x)
 $$
 for $W_x$-almost every path $\omega\in \Omega_x.$
Since $\mu$ is  ergodic and  the   function $\lambda(T)$ is  leafwise constant and measurable,  it follows that  for  all $a,b\in\R$ with $a\leq b,$
the $\mu$-measure of the  leafwise saturated set $\{ x\in X:\  a\leq \lambda(T)(x)\leq  b\}$ is 
either $0$ or  $\mu(X).$  Consequently, $\lambda(T)$ is 
constant $\mu$-almost  everywhere. 
The proof is  thereby completed.
\hfill $\square$

\section{Harmonic currents on the local model}
 \label{S:parametrization} 

We collect in this  section several  known  results about the mass-clustering  of harmonic measures
near  hyperbolic  singularities. More  concretely, 
    we  first recall  a special  parametrization
of leaves which is  due to  Forn\ae ss-Sibony \cite{FornaessSibony3}. Next, using  this parametrization,
we state   a mass-clustering result of harmonic measures
near  hyperbolic  singularities   which is also  due to Forn\ae ss-Sibony \cite{FornaessSibony3}.
Finally, we recall our recent  estimate  about the behaviour of some integral operators of  ``Poisson kernel" type 
near  hyperbolic  singularities. These results will  thoroughly be used in the  subsequent sections
when we  prove the  basic  estimates   stated in Section \ref  {S:Main_Theorem_2}.

 Following  \cite[Section 2]{FornaessSibony3},
consider  the foliation associated to the vector field $F(z,w) = z {\partial\over \partial z}
+ \lambda w{\partial\over \partial w}$  with some complex number $\lambda = a + ib,$ $b \not= 0.$ Note that
if we flip $z$ and $w,$ we replace $\lambda$ by $1/\lambda = \bar\lambda/|\lambda|^2 = a/(a^2 + b^2) - ib/(a^2 + b^2).$  Therefore, we
may assume without loss of generality  that  $b > 0.$ 
   We now
describe the portion of a general leaf inside $\D^2.$ 
There are two separatrices, $(w = 0),$ $ (z = 0).$ Other than that the  Riemann surface $\widehat L_\alpha$ defined  in \eqref{e:Riemann_surface_L_x} can be
reparametrized by 
\begin{equation}\label{eq_parametrization_DS}
(z,w) = \psi_\alpha(\zeta),\ z = e^{i(\zeta+(\log {|\alpha|})/b)},\ \zeta = u+iv,\ w = \alpha e^{i\lambda(\zeta+(\log {|\alpha|})/b)}.
\end{equation}
The  reader  is invited to compare this special parametrization with the ones given in \eqref{e:Riemann_surface_L_x} and \eqref{eq_leaf_equation_new}.
Consider  the new  variable
\begin{equation}\label{e:t}
 t:=bu+av.
\end{equation}
So we have 
\begin{equation}\label{eq_u,v_vs_z,w}
|z| = e^{-v}, |w| = e^{-bu-av}=e^{-t}  .
\end{equation}
Observe that as we follow $z$ once counterclockwise around the origin, $u$ increases
by $2\pi$, so the absolute value of $|w|$ decreases by the multiplicative factor of $e^{-2\pi b}.$
Hence, we cover all leaves  when  $\alpha$ ranges over $\T,$  where
\begin{equation}\label{e:cano_trans_T}  
\T:=\{ \alpha\in\C:\ e^{-2\pi b }\leq  |\alpha| \leq 1\}.
\end{equation}
 We
notice that with the above parametrization, the intersection with the unit bidisc $\D^2$
of the leaf is given by  the  domain $\{ (u,v)\in\R^2:\ v > 0, u > -av/b\}.$ 
The main   point of this special parametrization is that  the above domain  is  independent of $\alpha.$ In the $(u, v)$-plane this domain
corresponds to a sector $ S_\lambda$ with corner at $0$ and given by $0 < \theta < \arctan(-b/a)$
where the $\arctan$ is chosen to have values in $(0, \pi),$ that is,
\begin{equation}\label{e:S_lambda}
S_\lambda:=\left\lbrace \tau=re^{i\theta}\in\C:\ r>0\ \text{and}\ 0 < \theta < \arctan(-b/a)   \right\rbrace.
\end{equation}
Let $\gamma := {\pi\over
\arctan(-b/a)} .$ Then the
map 
\begin{equation}\label{eq_u,v_vs_U,V}
\phi : \tau=u+iv \mapsto \tau^\gamma=(u+iv)^\gamma =:U+iV
\end{equation}
 maps this sector to the upper half plane with coordinates $(U, V ).$
The fact that $\gamma > 1$ will be crucial, this is where the hyperbolicity of singularities is
used.

The local leaf clusters on both separatrices. To investigate the clustering on the
$z$-axis, we use a transversal $\T_{z_0} := \{(z_0,w): e^{-2\pi b }\leq |w| \leq 1\}$ for some $z_0$ with $|z_0| = 1.$ We
can normalize so that $h_\alpha(z_0,w) = 1$  for $(z_0,w)\in \T_{z_0}.$  
Solving   the  equation $(z_0,w) = \psi_\alpha(\zeta_0) = \psi_\alpha(u_0 + iv_0)$ with unknown
 variables $(u,v,\alpha)$ yields the unique
solution $u_0=-b^{-1}\ln|w|,  $ $ v_0 = 0$ and $\alpha=w.$
Consequently, by  identifying  $\alpha\in\T$ with  $(z_0,\alpha)\in\T_{z_0},$
we  may identify $\T$  with $\T_{z_0},$ and  hence   $\T$  can be  regarded as a  transversal.
We call $\T$ the {\it distinguished transversal}.
Let $T$ be  a  harmonic  current of mass $1$  directed by $\Fc.$ 
Let $\U$ be a  flow box which   admits  $\T_{z_0}$ as a transversal.
Then by Proposition \ref{prop_current_local},   we can write in $\U$
\begin{equation}\label{eq_local_description}
T=\int h_\alpha[V_\alpha] d\nu(\alpha),
\end{equation}
where, for each  $\alpha \in\T,$   $h_\alpha$ denotes the harmonic function associated to the current $T$ on the plaque  $V_\alpha $ which is  contained  in the leaf $L_\alpha.$ We still denote by $h_\alpha$
its harmonic continuation along $L_\alpha.$
 Define 
$$\tilde h_\alpha(\zeta) :=h_\alpha \left (
e^{i(\zeta+(\log |\alpha|)/b)}, \alpha e^{i\lambda (\zeta+(\log |\alpha|)/b)}\right)\quad\text{
on}\quad S_\lambda.$$
  Consider the harmonic  function  
$$\tilde H_\alpha:=\tilde h_\alpha\circ \phi^{-1}\quad\text{defined  on the upper half plane}\quad  \{  U+i V:\  V>0\}.$$   The following mass-clustering estimate 
of Fornaess--Sibony \cite{FornaessSibony3} is needed.
\begin{lemma} \label{lem_FS}  
1) The harmonic  function $\tilde H_\alpha $ is the Poisson integral of its boundary values.
So in the upper half plane  $\{  U+iV:\  V>0\}$,
$$
\tilde H_\alpha(U+iV)={1\over\pi} \int_{-\infty}^\infty \tilde H_\alpha( y){V\over V^2+( y-U)^2} d y
$$ for $\nu$-almost every $\alpha.$ Moreover,
$$
\int_{ \alpha\in\T}\int_{-\infty}^\infty\tilde H_\alpha( y)(1+ | y|)^{1/\gamma -1}d y d\nu(\alpha)<\infty.
$$
2) If, moreover,  $T$ gives no mass to every invariant analytic  curve, then  $\nu$ is  diffuse, that is, $\nu(\alpha)=0$ for every $\alpha.$
\end{lemma}
\begin{proof}
The first part is proved in \cite[Proposition 1]{FornaessSibony3}.

When  $\Fc$ has no invariant analytic  curve, the second part is  proved in 
 \cite[ Corollary 2]{FornaessSibony3}. But that  proof still  works in the more general context of Part 2)
 making the obviously  necessary changes.
\end{proof}

\section{Proof of  the  integrability condition: First reduction} \label{S:Main_Theorem_2}

   In this  section we reduce  Theorem   \ref{thm_main_2}  to  Theorem \ref{T:main_estimate}.  Let $\Fc=(X,\Lc,E)$    be  
a   holomorphic  hyperbolic foliation   with     hyperbolic singularities $E$ in a   compact complex projective  surface $X$
such that
   the foliation is  Brody hyperbolic. Let $T$ be  a  harmonic  current tangent to $\Fc.$
Fix  $x_0\in E.$
 Since $x_0$ is a hyperbolic  singular point, 
there is a  holomorphic  coordinate system $(z,w)$  near  $x_0 $ in which $x_0$ is  identified  with $0$  and 
the  foliation
$\Fc$    is
associated with the vector field $F(z,w) = z {\partial\over \partial z}
+ \lambda w{\partial\over \partial w}$  on  $\D^2$ with some complex number $\lambda = a + ib,$ $b > 0.$ So two  analytic curves $\{z=0\}$ and $\{w=0\}$  describe
 two separatrices of $\Fc$  at $x_0=0.$
 Let $\T$ be  the distinguished transversal  defined in \eqref{e:cano_trans_T}.
Consider  the  function $G:\  E\times (0,1)\to \R^+$ given by
\begin{equation}\label{e:G}
G(x,r):=  {1\over 2\pi r^2}\int_{\B(x,r)}  T\wedge i \ddbar \|y\|^2,
\end{equation} 
where $\B(x,r)$ is the ball of center $x$ and radius $r$ in $X.$
By Skoda  \cite{Skoda}, $G(x,r)$  is increasing in $r$ and $\lim_{r\to 0} G(x,r)$
is  equal to the Lelong number  of $T$ at $x.$   By our recent work  \cite{NguyenVietAnh3}, this number
is  $0,$ that is,
\begin{equation}\label{e:Lelong}
\lim_{r\to 0} G(x,r) =0.
\end{equation}
When $x=x_0,$ we  write $G(r)$ instead of $G(x_0,r).$ Using the above map $\Psi,$
we are reduced to the  local model considered in the previous section.   
For  every $s>0,$ consider the function $K_s:\ \R\to\R^+$ given by
\begin{equation}\label{eq_K_s}
K_s(y):= 
\begin{cases}
s^{1-\gamma}, & \text{if}\ s\geq (1+|y|)^{1/\gamma};\\
(1+|y|)^{1/\gamma-1}, & \text{if}\ s\leq (1+|y|)^{1/\gamma}.
\end{cases}
\end{equation}
The following result  gives a precise  estimate of  $G(r)$ in terms of the function $K_s.$  
\begin{lemma}\label{lem_estimate_G} 
There is a constant $c>0$  such that for every $0<r<1,$  we have 
$$
c^{-1}G(r)\leq \int_{ \alpha \in\T}\Big(\int_{-\infty}^\infty 
K_{-\log r}(y)\tilde H_\alpha( y)
d y \Big)d\nu(\alpha)\leq c G(r).
 $$
\end{lemma}  
\begin{proof} It follows  from combining 
  \cite[Proposition 3.5]{NguyenVietAnh3} and   \cite[Lemma 3.2]{NguyenVietAnh3}.
 \end{proof}

  We are in the position  to state  the  main estimate  of this  article.
 \begin{theorem}\label{T:main_estimate}
 There are constants   $c_0,\kappa>1$ such that for every $x\in E$ and  $0<r<1/2,$ 
$$
 G(x,r)\leq c_0|\log(-\log r)||\log r|^{-1}+c_0\int_{ \alpha \in\T}\Big(\int_{(1+|y|)^{1/\gamma}\leq - \kappa\log r} 
K_{-\log r}(y)\tilde H_\alpha( y)
d y \Big)d\nu(\alpha) .
 $$  
 \end{theorem}
 The  proof of  Theorem  \ref{T:main_estimate} will be  given  at the  end of Section \ref{S:test_curves}.
 \begin{remark}\label{R:speed}\rm
   Using   Proposition \ref{P:comparison} below (for $\delta=1$), Theorem \ref{T:main_estimate}
   is  equivalent to  the  assertion that 
\begin{multline*}
   \int_{\B(0,r)}T\wedge [z=r] \leq c_0|\log(-\log r)||\log r|^{-1}\\
+c_0\int_{ \alpha \in\T}\Big(\int_{(1+|y|)^{1/\gamma}\leq - \kappa\log r} 
K_{-\log r}(y)\tilde H_\alpha( y)
d y \Big)d\nu(\alpha) .
 \end{multline*}
 This  is  the  precise  meaning  of the speed  that  we mention  in Subsection  \ref{SS:outline} (see the  discussion
 following    \eqref{e:reduction_intro}). The  integral on  the  right hand side of the last line  decays, in some sense, 
 very quickly as $r\to 0.$  Indeed, it is, up to a multiplicative constant, equal to
$$
\int_{ \alpha \in\T}\Big(\int_{(1+|y|)^{1/\gamma}\leq - \kappa\log r} 
 (-\log r)^{1-\gamma}\tilde H_\alpha( y)
d y \Big)d\nu(\alpha).$$
Rewrite  the last line as  follows:
 $$\int_{ \alpha \in\T}\Big(\int_{(1+|y|)^{1/\gamma}\leq - \kappa\log r} 
{(1+|y|)^{1-1/\gamma}\over (-\log r)^{\gamma-1} }(1+|y|)^{1/\gamma-1} \tilde H_\alpha( y)
d y \Big)d\nu(\alpha)  .
$$
Since for every $y\in\R,$ $ {(1+|y|)^{1-1/\gamma}\over (-\log r)^{\gamma-1} }\to 0$ as $r\to 0,$
it follows from
  Lemma \ref{lem_FS}   and the  dominated convergence that  the last  integral  tends to $0$ as $r\to 0$  (see \cite{NguyenVietAnh3} for  details). 
\end{remark}

  Taking for granted  this  result,   we  arrive at  the

 \noindent{\bf End of the proof of  Theorem  \ref{thm_main_2}.}
Fix  a point $x_0\in E$ and a  holomorphic coordinate system $x=(z,w)$ as at the beginning  of this  section.
 So $x_0$ is  identified with $0\in\D^2.$  Since   the two Hermitian  metrics $g_X$ and $i \ddbar \|x\|^2$ are  equivalent on $\D^2,$ that is,
$g_X\approx i \ddbar \|x\|^2,$  we may regard
$i \ddbar \|x\|^2$   as   $g_X.$
Moreover, in the remainder of the proof, we  will write $\B_r$ (resp. $G(r)$)  instead of $\B(x_0,r)$
(resp. $G(x_0,r)$)  for $0<r<1.$
 Next, recall from 
(\ref{eq_relation_Poincare_Hermitian_metrics}) that
$$
i\ddbar \|x\|^2=\eta^2(x) g_P(x),
$$
where  we know  from Part 1) of Lemma  \ref{lem_poincare} that $\eta(x)\approx \|x\| \log\|x\|$ for $0<\|x\|<1/2.$
Therefore, we infer that 
$$\mu:= T\wedge  g_P\approx  { T\wedge i\ddbar \|x\|^2\over  \|x\|^2 (\log\|x\|)^2}\quad\text{on}\  \B_{1/2}.$$ 
Moreover,   we infer  from \eqref{e:G}  that for every smooth function $h:\ [0,1]\to \R^+,$
$$
\int_{\B_{1/2}} { T\wedge i\ddbar \|x\|^2\over h(\|x\|)} =
\int_0^{1/2} {d(r^2G(r))\over h(r)}.
$$
Consequently,
$$
\int_{\B_{1/2}} | \lof \dist(x,E)| \cdot d\mu(x)\approx \int_{\B_{1/2}}{ T\wedge i\ddbar \|x\|^2\over  \|x\|^2 (\log\|x\|)}
=\int_0^{1/2} {d(r^2G(r)) \over  -r^2 \log r}.
$$
 Performing an integration by part to the last expression yields that
\begin{eqnarray*}
 \int_0^{1/2}  { d(r^2G(r)) \over - r^2 \log r}
 &=&\left \lbrack { G(r) \over -  \log r}\right\rbrack^{1/2}_0\\
&-&2 \int_0^{1/2}  { G(r)dr \over r \log r}-\int_0^{1/2}  { G(r)dr \over r (\log r)^2}.
\end{eqnarray*}
Since  $G(r)$ tends to the Lelong number of $T$ at $0$ as $t\to 0,$ the  expression in brackets
is  finite. Therefore, in order to show  that $
\int_{\B_{1/2}} | \lof \dist(x,E)| \cdot d\mu(x)<\infty,$  it suffices to prove that
\begin{equation}\label{eq_reduction}
 \int_0^{1/2}  { G(r)dr \over - r \log r}<\infty.
\end{equation}
The remaining part is  devoted to the proof of (\ref{eq_reduction}). 
By   Theorem \ref{T:main_estimate}, the integral in  (\ref{eq_reduction}) is  bounded by a constant times
$(I)+(II),$  where
$$
I:=\int_0^{1/2}  { |\log(-\log r)| dr \over r |\log r|^2}<\infty,
$$
and by Fubini's theorem,
$$
II:=\int_{ \alpha \in\T}\Big(\int_{y=-\infty}^\infty  
\big (\int_{(1+|y|)^{1/\gamma}\leq -  \kappa\log r}  { K_{-\log r}(y)dr  \over - r \log r}\big) \tilde H_\alpha( y)
d y \Big)d\nu(\alpha).
 $$ 
On the other hand,  we infer  from  \eqref{eq_K_s}  the  existence of a  constant $c>0$ such that 
for all $y\in \R,$  
$$
       \int_{ s\geq \kappa^{-1} (1+|y|)^{1/\gamma}}  s^{-1} K_s(y)ds\leq  c(1+ | y|)^{1/\gamma -1} . 
$$
 Performing  the  change of variable $s:=-\log r$ in the last line, the most inner integral
 of $(II)$
is  dominated  by a constant times  $(1+ | y|)^{1/\gamma -1}.$ 
Consequently,  $(II)$ is  bounded by
$$
\int_{ \alpha \in\T}\int_{-\infty}^\infty\tilde H_\alpha( y)(1+ | y|)^{1/\gamma -1}d y d\nu(\alpha),
$$
which is  finite  by Part 1) of Lemma  \ref{lem_FS}.
 This  completes the proof of (\ref{eq_reduction}), and hence  the proof of the theorem.
 \hfill $\square$
 \begin{remark} \label{rem_log_integrability} \rm
As  remarked in the Introduction,  the method  employed in  Dinh-Nguyen-Sibony \cite{DinhNguyenSibony1}
seems to only  give   a  weaker  inequality
  $$
\int | \lof \dist(x,E)|^{1-\delta}\cdot( T\wedge g_P)(x)<\infty, \qquad \delta>0.
  $$ 
 Indeed, arguing as in the proof of Theorem  \ref{thm_main_2} and using  the weight $| \lof \dist(x,E)|^{1-\delta}$
instead of  $  | \lof \dist(x,E)|,$ the above  inequality
 is  reduced  to  the following one
$$
 \int_0^{1/2}  { G(r)dr \over - r (\log r)^{1+\delta}}<\infty.
$$ 
  In \cite{DinhNguyenSibony1}    $G(r)$ is  replaced  by a positive constant, and hence  the above integral is finite
  if and only if $\delta>0.$
 \end{remark}
 
\section{Geometric intersection and  interpretations} \label{S:intersection_interpretation}
 
 Let $\Fc=(X,\Lc,E)$    be  
a   holomorphic  hyperbolic foliation   with     hyperbolic singularities $E$ in a   compact complex   
surface $X.$ Let $T$ be a harmonic  current tangent to $\Fc,$  and let $\Cf$ be an analytic curve  on an open subset $\U\subset X.$ The main purpose of the section is  to give
a  reasonable  meaning  to  the intersection measure $T \wedge[\Cf],$ and to   obtain a  procedure
in order to estimate the mass of the last measure.  
We are inspired by the recent works in 
  \cite{FornaessSibony1,FornaessSibony2,FornaessSibony3}.

 Let $\U\simeq \B\times\T$ be a flow
box which is relatively compact in $X\setminus E.$
Let $\mathfrak{C}$ be an analytic  curve  on $\U$ such that
for every $\alpha\in\T,$  $\mathfrak{C}$ intersects the plaque $V_\alpha$ at
at most one point  (which is possibly a multiple point). We say  that $\mathfrak{C}$   is  {\it transversal} in  $\U.$
We  define the {\it geometric  intersection } of $T$  and $[\mathfrak{C}]$
as  the positive Radon measure  on $\U$  given by:
\begin{equation}\label{e:local_inter}
\langle T\wedge [\mathfrak{C}],\phi\rangle=\langle T\wedge [\mathfrak{C}],\phi\rangle|_{\U}  :=\int_{\alpha\in\T:\  \xi_\alpha\not=\varnothing}
h(\xi_\alpha)\phi(\xi_\alpha) d\nu(\alpha),
\end{equation}
where $\phi$ is a continuous test  function compactly supported in $\U,$ and

$\bullet$ $\xi_\alpha:= V_\alpha\cap\mathfrak{C}$ if this intersection is  non empty and $\xi_\alpha=\varnothing$
otherwise;

$\bullet$ the decomposition consisting of 
 the  positive Radon measure  $\nu$ on $\T,$ 
 and the positive harmonic function $h_\alpha$ on $\B$
 for $\nu$-almost every $\alpha\in \T$   is given by  Proposition  \ref{prop_current_local}. 

The  reader can easily check  the following result.
\begin{proposition} \label{P:local_inter}
$T\wedge [\mathfrak{C}]$ is  a well-defined positive Radon measure on $\U.$
It is independent of the choice of a decomposition given by  Proposition  \ref{prop_current_local}.
Its mass is
 $$
 \|T\wedge [\mathfrak{C}]\| =\|T\wedge [\mathfrak{C}]\|_{\U}=\int_{\alpha\in\T:\  \xi_\alpha\not=\varnothing}
h(\xi_\alpha)  d\nu(\alpha)<\infty.
 $$
\end{proposition}

Now  let $\U$ be an  an arbitrarily  open subset of $X$ and  $\mathfrak{C}$ an analytic curve on $\U.$
We say  that $\mathfrak{C}$   is  {\it almost transversal} in  $\U$ if $\mathfrak{C}$ intersects with  each plaque in every  regular flow box in $\U$  transversally at  at most finite points.
We leave  the reader  to verify the following result.
 \begin{lemma}\label{L:local_transversal}
  $\mathfrak{C}$ is almost transversal  if and only if  $\mathfrak{C}$ is locally transversal in $\U,$ that is,  for every $x\in\mathfrak{C}\cap \U,$ there is
a flow box $\U_x\subset \U$ containing $x$  such that
 $\mathfrak{C}$ is transversal in $\U_x.$ 
 \end{lemma}
 Assume that $\mathfrak{C}$ is almost transversal. 
By Lemma \ref{L:local_transversal}, there is 
 an at most countable cover $\Uc:=(\U_j)_{j\in J}$  of $\U\setminus E$ by its open subsets  such that
$\Uc$ is locally finite and that  each $\U_j$ ($j\in J$) is  a flow  box   which is  relatively compact in $\U\setminus E$ and that  $\mathfrak{C}$ is  transversal in $\U_j.$
 Let $\Theta:=(\theta_j)_{j\in J}$ be a  partition of unity subordinate to
$\Uc.$

The mass of the intersection $T\wedge [\mathfrak{C}]$
is  
$$
\|T\wedge [\mathfrak{C}]\|=\sum_{j\in J}  \langle T\wedge [\mathfrak{C}], \theta_j\rangle|_{\U_j}\in[0,\infty].  
$$
Apparently,  the mass $\|T\wedge [\mathfrak{C}]\|$  depends on the choice of  a cover $\Uc$ and a partition
of unity $\Theta.$ However, it turns out that this mass is  independent of such a choice. More precisely,
we can show the following properties. 
\begin{proposition}\label{P:e:global_inter}
\begin{enumerate}
\item[(i)] The mass $\|T\wedge [\mathfrak{C}]\|$ does not depend on any choice we made. 
\item[(ii)] If $\U\cap E=\varnothing,$ then  $\|T\wedge [\mathfrak{C}]\|<\infty.$
\item[(iii)]  When $\|T\wedge [\mathfrak{C}]\|<\infty,$  we   define the {\it geometric  intersection } of $T$  and $[\mathfrak{C}]$
as  the positive Radon measure  on $\U$  given by:
 \begin{equation}\label{e:global_inter}
\langle T\wedge [\mathfrak{C}],\phi\rangle:= \sum_{j\in J} \langle T\wedge [\mathfrak{C}],\theta_j\phi\rangle_{\U_j},
\end{equation}
where $\phi$ is a continuous test  function compactly supported in $\U,$
\item[(iv)]  When $\|T\wedge [\mathfrak{C}]\|<\infty,$ 
 the measure  $T\wedge [\mathfrak{C}]$ defined by \eqref{e:global_inter}
 does not depend on any choice of $\Uc$ and $\Theta$  we made.
\end{enumerate}
\end{proposition} 

Next, we  prove a  cohomological invariant property.
 
 \begin{proposition}\label{P:coh_inv}
 Let $\Cf$ and $\Df$ be  two algebraic  curves on $X$ which are cohomologous  (in the cohomology group  $H^{1,1}(X,\R)$). Suppose that $\Cf\cap E=\Df\cap E=\varnothing$
 and that both $\Cf$ and $\Df$ are almost transversal. 
 Then $\|  T\wedge [\Cf] \|_X=\|  T\wedge [\Df] \|_X.$ 
 \end{proposition}
 \begin{proof} 
Since $\Cf\cap E=\Df\cap E=\varnothing$ and  both $\Cf$ and $\Df$ are almost transversal, we  may find
 a finite  cover $\Uc:=(\U_j)_{j\in J}$  of $ X$ by its open subsets  such that
 
 $\bullet$ if $\U_j\cap E\not=\varnothing,$ then this intersection is  a single point and
$\Cf\cap \U_j=\Df\cap \U_j=\varnothing;$
 
 $\bullet$ 
   each $\U_j$ with $U_j\cap E=\varnothing$ is  a regular flow  box    such that both  $\Cf$
and $\Df$  are  transversal in $\U_j.$
 Let $(\theta_j)_{j\in J}$ be a  partition of unity subordinate to
$\Uc.$   
 
Consider a  smooth  Hermitian metric $\|\cdot\|$ on the line  bundle generated  by the  divisor
$[\Cf]$ (resp.  $[\Df]$) on $X.$ Let  $\sigma$ (resp. $\sigma'$) be a  holomorphic  section
having   $[\Cf]$ (resp.  $[\Df]$) as  its  divisor. Then
$$\phi:=\log \|\sigma\|\qquad\text{and}\qquad\psi:=\log \|\sigma'\|
 $$
 are  quasi-plurisubharmonic functions on $X.$
 Recall here that a quasi-plurisubharmonic function is
locally   the sum of a plurisubharmonic  function and  a smooth one.
 Lelong-Poincar\'e formula  says that
  \begin{equation}\label{e:Lelong-Poincare}
 [\Cf]=  i\ddbar \phi+\Theta\qquad\text{and}\qquad [\Df]=  i\ddbar \psi+\Theta',
 \end{equation}
  where  $\Theta$ and  $\Theta'$  are some closed smooth real $(1,1)$-forms   on $X.$ 
Since  
    $[\Cf]$  and
 $[\Df]$ are  cohomologous, it follows that  so are  $\Theta$ and     $\Theta'.$
So by the $\ddbar$-lemma  for compact K\"ahler manifolds, there is a smooth real function $u$
on $X$  such that $\Theta'-\Theta=i\ddbar u.$ Therefore, replacing  the metric $\|\cdot\|$ 
on the line bundle
associated  with $[\Cf]$  by $\| \cdot\|e^{-2u},$ we may assume without loss of generality  that
  $\Theta'=\Theta.$

Observe  that $\phi$  (resp. $\psi$)
  is  smooth outside 
  the curve $ \Cf $ (resp. $\Df$). 
Since
$\Cf\cap E=\Df\cap E=\varnothing,$ we infer that
   both $\phi$ and $\psi$
  are smooth in a  neighborhood of $E.$
 
 Fix the following  decreasing sequence (as $\epsilon\searrow 0$) of quasi-plurisubharmonic smooth functions $(\phi_\epsilon)_{0<\epsilon<1}$ (resp.
  $(\psi_\epsilon)_{0<\epsilon<1}$) on $X:$
\begin{equation*}
\phi_\epsilon:={1\over 2}\log (\|\sigma\|^2+\epsilon)\qquad\text{and}\qquad\psi:={1\over 2}\log (\|\sigma'\|^2+\epsilon).
\end{equation*}
Observe  that 
$$\lim_{\epsilon\to 0+} \phi_\epsilon =\phi\qquad\text{and}\qquad   \lim_{\epsilon\to 0+} \psi_\epsilon =\psi$$  and that  there  is  
  a  closed smooth real $(1,1)$-form $\Xi$ on $X$ such that
$$i\ddbar\phi_\epsilon\geq \Xi \qquad\text{and}\qquad     i\ddbar\psi_\epsilon\geq \Xi$$
  in the sense of currents and independent of $\epsilon.$

If  $\U_j\cap E\not=\varnothing,$ we deduce from
   the  first  $\bullet$ above as well as the properties of $\phi$ and $\psi$ discussed  above  that 
$  \phi_\epsilon$ (resp. $\psi_\epsilon$) converges uniformly  to  $\phi$
 (resp. $\psi$)  as $\epsilon\searrow 0$ on  $\U_j.$

 If  $\U_j\cap E\not=\varnothing,$ we need the following result whose proof  will be given later on.  
\begin{lemma}\label{L:uniform}
For every $\U_j\simeq \B_j\times \T_j\in\Uc$
with $\U_j\cap E=\varnothing,$ we  have  that  
 \begin{equation}\label{e:L1_uniform}
\lim_{\epsilon\to 0} \sup_{\alpha\in \T}\|\phi_\epsilon-\phi\|_{L^1(V_\alpha)}= 0
\quad\text{and}\quad  \lim_{\epsilon\to 0} \sup_{\alpha\in \T}\|\psi_\epsilon-\psi\|_{L^1(V_\alpha)}= 0.
\end{equation}
\end{lemma}
Resuming the proof of Proposition \ref{P:coh_inv},
 let $\chi$ be  a continuous test function on $X.$ In what  follows  we  drop the index $j$
for simplicity, e.g. we will write $\U\simeq \B\times \T,$ $\theta$  instead of  $\U_j\simeq \B_j\times \T_j,$ $\theta_j$  respectively.
Let $\U\in\Uc$ be  such that $\U\cap E=\varnothing.$ Write $\U\simeq \B\times \T.$
Using \eqref{e:Lelong-Poincare} and  noting that  $\Theta'=\Theta,$ and applying  Lelong-Poincar\'e formula on each plaque $V_\alpha,$ $\alpha\in \T$ of $\U,$   we get  that
 \begin{eqnarray*}
  h(\xi_\alpha)(\theta\chi)(\xi_\alpha)
  &=& \langle\Theta|_{V_\alpha}+i\ddbar\phi|_{V_\alpha}, h\theta\chi\rangle|_{V_\alpha}\\
  &=&\langle \Theta|_{V_\alpha}h\theta\chi\rangle|_{V_\alpha}+
 \langle i\ddbar\phi|_{V_\alpha}, h\theta\chi\rangle|_{V_\alpha}\\
  &=&\langle \Theta|_{V_\alpha}h\theta\chi\rangle|_{V_\alpha}+\lim_{\epsilon\to 0}\langle i\ddbar\phi_\epsilon|_{V_\alpha}, h\theta\chi\rangle|_{V_\alpha}\\
&=&\langle \Theta|_{V_\alpha}h\theta\chi\rangle|_{V_\alpha}+
 \lim_{\epsilon\to 0}\langle \phi_\epsilon|_{V_\alpha},i\ddbar( h\theta\chi)\rangle|_{V_\alpha},
  \end{eqnarray*}
where the  third  equality holds since $\phi_\epsilon\to\phi$ weakly  on $V_\alpha,$ and the last  equality is obtained  by  Stokes' theorem.
Since  \eqref{e:L1_uniform} says that  the last limit is    uniform  in $\alpha,$
we can integrate both extreme sides of the last chain of equalities with respect to the measure $d\nu$ and  obtain that
  \begin{eqnarray*}
 \langle T\wedge [\mathfrak{C}],\theta\chi\rangle_{\U}
 &=&  \int_{\alpha\in\T} h(\xi_\alpha)(\theta\chi)(\xi_\alpha)d\nu(\alpha)\\
  &=&  \int_{\alpha\in\T}\langle \Theta|_{V_\alpha},h\theta\chi\rangle|_{V_\alpha}d\nu(\alpha)
 +\int_{\alpha\in\T}\lim_{\epsilon\to 0}\langle \phi_\epsilon , i\ddbar(h\theta\chi)\rangle|_{V_\alpha}d\nu(\alpha)\\
 &=&\langle  T\wedge \Theta, \theta\chi\rangle+\lim_{\epsilon\to 0}\int_{\alpha\in\T} \langle i\ddbar\phi_\epsilon  ,  h\theta\chi\rangle|_{V_\alpha}d\nu(\alpha) \\
 &=& \langle  T\wedge \Theta, \theta\chi\rangle+\lim_{\epsilon\to 0}\langle T\wedge i\ddbar\phi_\epsilon  , \theta\chi\rangle.
  \end{eqnarray*}
 On the other hand,  for  $\U\in\Uc$ with $\U\cap E\not=\varnothing,$ we  have that
  \begin{equation*}
 \langle T\wedge [\mathfrak{C}],\theta\chi\rangle_{\U}=0
    =  \langle  T\wedge \Theta, \theta\chi\rangle+\lim_{\epsilon\to 0}\langle T\wedge i\ddbar\phi_\epsilon  ,\theta\chi\rangle,
  \end{equation*}
  where we use the first $\bullet$  above and  the fact that $\phi$ is  smooth on a neighborhood of the support  of $\theta\chi.$
  
  Summing up the above  equalities over  all $\U\in\Uc$ and using   Definition  \eqref{e:global_inter},
we infer that
$$
\langle T\wedge [\mathfrak{C}], \chi\rangle=\langle  T\wedge \Theta,  \chi\rangle+\lim_{\epsilon\to 0}\langle T\wedge i\ddbar\phi_\epsilon  , \chi\rangle.
$$
When $\chi\equiv 1,$ the  last equality  becomes
$$
\| T\wedge [\mathfrak{C}]\|_X=\langle  T, \Theta  \rangle+\lim_{\epsilon\to 0}\langle T ,i\ddbar\phi_\epsilon   \rangle= \langle  T, \Theta  \rangle,
$$
 where the last equality is obtained since $\langle T ,i\ddbar\phi_\epsilon   \rangle=0$ 
as $T$ is  harmonic and $\phi_\epsilon$ is smooth on $X.$
   Hence, $
\| T\wedge [\mathfrak{C}]\|_X=\langle  T, \Theta  \rangle.$
Similarly, we also  get that   $
\| T\wedge [\mathfrak{D}]\|_X=\langle  T, \Theta \rangle.$
The proof is  thereby completed.
  \end{proof}
\smallskip
\noindent {\bf  End of the  proof of Lemma \ref{L:uniform}.}
  We only need  to show  that
\begin{equation}\label{e:unif_red}
\lim_{\epsilon\to 0} \sup_{\alpha\in \T}\|\phi_\epsilon-\phi\|_{L^1(V_\alpha)}= 0
\end{equation}
since the  other assertion can be proved  similarly.
 Since $\U_j\cap E=\varnothing,$ the second  $\bullet$ above  says that $\Cf$ is  transversal in $\U_j.$
Therefore, we are reduced to the following model where $ \U_j\simeq \B_j\times \T_j\simeq
(1/2\D)^2$ and  
$$\Cf\cap \U_j=\left\lbrace  (w,f(w)):\qquad w\in 1/2\D \right\rbrace,$$
where $f:\  1/2\D\to 1/2\D$  is a holomorphic  function.    

In this  model, we  see easily that modulo a smooth function
$
\phi_\epsilon(z,w)={1\over 2}\log(|z-f(w)|^2+\epsilon)$ for $(z,w)\in(1/2\D)^2. $
So \eqref{e:unif_red} becomes
$$
\sup_{w\in 1/2\D} \int_{z\in 1/2\D} \big({1\over 2}\log(|z-f(w)|^2+\epsilon)-\log|z-f(w)|\big) idz\wedge d\bar z\to 0\quad\text{as}
\quad \epsilon\searrow 0.
$$
Since the  right hand side is  bounded from above  by 
$$
 \int_{z\in\D} {1\over 2}\big(\log(|z |^2+\epsilon)-\log|z|\big) idz\wedge d\bar z
$$
and that this integral  converges to $0$ as $\epsilon\searrow 0,$ the desired estimate  follows. 
\hfill  $\square$
 
 \medskip
 
 The  rest of the section is devoted to the case when the foliation $\Fc$ on the open set $\U$ is
holomorphically equivalent to the foliation associated with the vector field $F$
in $\D^2$  introduced in Section  \ref{S:parametrization}.
  So  we are in the local  model  considered  in  Section  \ref{S:parametrization}    and $0\in\U=\D^2.$
 We keep the notation introduced in Section  \ref{S:parametrization}.
 Recall that $\T\simeq \{\alpha\in\C:\ e^{-2\pi b}\leq |\alpha|\leq 1\}.$
 Let $\mathfrak{C}$ be an analytic  curve  on $\D^2$  which is  locally transversal in $\D^2.$ 
 For every $\alpha \in \T ,$
 let $\{\xi_{\alpha_j}:\ j\in J_\alpha\}$ be the set of all intersections of 
  $\mathfrak{C}$ with the Riemann surface $\widetilde{L}_\alpha.$
 We make the following convention  $J_\alpha:=\{0,1,\ldots, n_\alpha\}$ with $n_\alpha\in\N\cup\{\infty\}.$
 Continuing Proposition \ref{P:e:global_inter} we can prove the following result.
 \begin{proposition}\label{P:inter_model}
  \begin{enumerate}
  \item[(i)]  The following equality holds
  $$
  \|T\wedge [\mathfrak{C}]\|=\int_{\alpha\in\T} \sum_{j\in J_\alpha} h_\alpha(\xi_{\alpha,j}) d\nu(\alpha).
  $$
   \item[(ii)] If  $
  \|T\wedge [\mathfrak{C}]\|<\infty,$ then the measure $T\wedge [\mathfrak{C}]$ can be extended to a 
continuous linear form on the space $\Cc^b(\D^2)$ of uniformly bounded  continuous functions on $\D^2$ as follows:
$$ \langle T\wedge [\mathfrak{C}],\phi\rangle=\int_{\alpha\in\T} \sum_{j\in J_\alpha} h_\alpha(\xi_{\alpha,j})\phi(\xi_{\alpha,j}  ) d\nu(\alpha),\qquad \phi\in\Cc^b(\D^2).
$$  
\end{enumerate}
  \end{proposition}
 
 For  every $\alpha\in \T$ and $j\in J_\alpha,$  write, using \eqref{eq_parametrization_DS} 
 and \eqref{eq_u,v_vs_U,V},
 \begin{equation}\label{e:change_var_discrete}
 \begin{split}
 \xi_{\alpha,j}&=\psi_\alpha(\zeta_{\alpha,j}),\qquad  \zeta_{\alpha,j}=u_{\alpha,j}+iv_{\alpha,j},\\
 U_{\alpha,j}+iV_{\alpha,j}&:=(u_{\alpha,j}+iv_{\alpha,j})^\gamma.
\end{split} 
\end{equation}
 Recall from Section    \ref{S:parametrization} that the harmonic function
 $\tilde h_\alpha(\zeta) :=h_\alpha \left (
 \psi_\alpha(\zeta)\right)$
is defined on $S_\lambda$  and that the harmonic  function  $\tilde H_\alpha:=\tilde h_\alpha\circ \phi^{-1}$ is defined  in the upper half plane  $\{  U+i V:\  V>0\}.$ 
 Applying the Poisson representation formula   the upper half plane yields that
\begin{equation}\label{e:Poisson}
h_\alpha(\xi_{\alpha,j})=\tilde h_\alpha(\zeta_{\alpha,j})=\tilde H_\alpha( U_{\alpha,j}+iV_{\alpha,j})=
{1\over\pi} \int_{-\infty}^\infty \tilde H_\alpha( y){V_{\alpha,j}\over V_{\alpha,j}^2+( y-U_{\alpha,j})^2} d y.
 \end{equation}
 For $\nu$-almost every $\alpha\in \T,$  write
 \begin{equation}\label{e:slice_mass}
  \|T\wedge [\mathfrak{C}]\|_\alpha:=
 {1\over\pi}\int_{-\infty}^\infty \tilde H_\alpha( y)\sum_{j\in J_\alpha}{V_{\alpha,j}\over V_{\alpha,j}^2+( y-U_{\alpha,j})^2} dy.
 \end{equation}
 We obtain the  following formula
 \begin{equation}\label{e:mass_vs_slice-mass}
\|T\wedge [\mathfrak{C}]\|=\int_{\alpha\in\T} \|T\wedge [\mathfrak{C}]\|_\alpha d\nu(\alpha).
 \end{equation}
 
 Recall from \eqref{e:S_lambda} the sector $S_\lambda$ in the upper-half plane.
 \begin{proposition}\label{P:interpretation}
 Let $c,\rho>1$ and $m>0$ be three constants.
 For $\nu$-almost  every $\alpha\in\T$ assume  that
\begin{enumerate}
\item[$\bullet$]
there is  a $\Cc^1$-map $\chi^\alpha:\ D^\alpha\to  S_\lambda$ be defined on a closed interval $D^\alpha\subset \R$
such that $ c^{-1}\leq |(\chi^\alpha)'(t)|\leq c;$

  \item[$\bullet$] there is a sequence of points  
  $ (t_{\alpha,j})_{j\in J_\alpha}\subset  D^\alpha$    such that the  intervals 
$[t_{\alpha,j}-\rho^{-1}m,t_{\alpha,j}+\rho^{-1}m]$ for $j\in J_\alpha$ are  pairwise disjoint
 and that  $$
  \bigcup_{j\in J_\alpha} [t_{\alpha,j}-\rho^{-1}m,t_{\alpha,j}+\rho^{-1}m]\subset D^\alpha\subset
   \bigcup_{j\in J_\alpha} [t_{\alpha,j}-\rho m,t_{\alpha,j}+\rho m].
  $$
\end{enumerate}
  Write, using \eqref{eq_parametrization_DS} 
 and \eqref{eq_u,v_vs_U,V}, for $t\in D^\alpha,$
 \begin{equation}\label{e:change_var_cont}
 \begin{split}
 \xi_{\alpha}(t)&=\psi_\alpha(\chi^\alpha(t)),\qquad  \chi^{\alpha}(t)=u_{\alpha}(t)+iv_{\alpha}(t),\\
 U_{\alpha}(t)+iV_{\alpha}(t)&:=(u_{\alpha}(t)+iv_{\alpha}(t))^\gamma.
\end{split} 
\end{equation}
This  is the continuous  version of \eqref{e:change_var_discrete}.
 Consider the function $K^\alpha:\ \R\to\R^+$ and the  real number $\kappa\in\R^+$  defined by
\begin{equation}\label{e:interpretation}
\begin{split}
K^\alpha(y)&:={1\over m} \int_{D^\alpha}{V_{\alpha}(t)\over V_{\alpha}(t)^2+( y-U_{\alpha}(t))^2},\qquad y\in\R;\\
\kappa&:={1\over \pi}\int_{-\infty}^\infty \tilde H_\alpha(y) K^\alpha(y)dy.
\end{split}
\end{equation}
\begin{itemize}
\item[1)] If, moreover, $ c^{-1}  h_\alpha(\xi_{\alpha,j})\leq  h_\alpha(\xi_\alpha(t)) \leq  c  h_\alpha(\xi_{\alpha,j}) $
for all $j\in J_\alpha,$ $ t\in [t_{\alpha,j}-\rho m,t_{\alpha,j}+\rho m],$ then
$$
    c^{-2}\rho^{-1}\kappa\leq \|T\wedge [\mathfrak{C}]\|\leq  c^2\rho  \kappa.
$$
\item[2)]
If, moreover, $$c^{-1}  {V_{\alpha,j}\over V_{\alpha,j}^2+( y-U_{\alpha,j})^2}\leq
 {V_{\alpha}(t)\over V_{\alpha}(t)^2+( y-U_\alpha(t))^2}
\leq  c {V_{\alpha,j}\over V_{\alpha,j}^2+( y-U_{\alpha,j})^2}
  $$ for all $j\in J_\alpha,$ $ t\in [t_{\alpha,j}-\rho m,t_{\alpha,j}+\rho m]$ and $ y\in\R,$ 
 then 
 $$
 c^{-2}\rho^{-1} K^\alpha(y)\leq \sum_{j\in J_\alpha}{V_{\alpha,j}\over V_{\alpha,j}^2+( y-U_{\alpha,j})^2}\leq c^2\rho K^\alpha(y),\qquad y\in\R.
 $$
 In particular, the concluding estimate of Part 1) holds.
 \end{itemize}
 \end{proposition}
 \begin{proof} The idea  is to  approximate  a Riemann sum of a function   by its integral. The proof  follows easily from  
 Proposition \ref{P:inter_model}
 and \eqref{e:change_var_discrete}--\eqref{e:mass_vs_slice-mass}.
 \end{proof}
 
 \begin{definition}\label{D:interpretation}
 \rm  If the  assumption of Part 1)   of Proposition \ref{P:interpretation} holds, then
 we  say that $(K^\alpha)_{\alpha\in\T}$ given by \eqref{e:interpretation} is an {\it interpretation} of the  geometric intersection
$ T\wedge [\mathfrak{C}]$ on $\U$  with  parametrization $(\chi^\alpha)_{\alpha\in\T}$ and
with {\it  size}  $(c,\rho,m).$ Moreover, $m$ is  said to be 
the {\it mesh} of the interpretation.
 
 If the  assumption  of Part 2) of  Proposition \ref{P:interpretation} holds, then
 we  say that $(K^\alpha)_{\alpha\in\T}$ is a {\it coherent interpretation} of the  geometric intersection
$ T\wedge [\mathfrak{C}]$ on $\U.$
 \end{definition}
The following result   
  studies  the behavior of the Poisson kernel ${V\over V^2+( y-U)^2}$ in terms of $u$ and  $v.$ 
\begin{lemma}\label{lem_Poisson_kernel} {\rm (Nguy\^en \cite[Lemma 3.3]{NguyenVietAnh3})}
There are constants $c_1, c_2,c_3 >1$ large enough  with $c_3>c_2$ such that the following properties hold
for all $(u,v)\in\R^2$ with $\min\{v, bu+av\}\geq 1.$
\\
1)  $$  {1\over c_1}\leq {(\max\{v, bu+av\})^\gamma\over \sqrt{V^2+U^2}}\leq c_1\ \text{and}\   {1\over c_1}\leq {(\max\{v, bu+av\})^{\gamma-1}\min\{v, bu+av\} \over V}\leq c_1 .  $$
\\
2)  If  $\max\{v, bu+av\}\geq  c_2(1+|y|)^{1/\gamma},$ then
$$
{1\over c_1}  {\min\{v, bu+av\}\over (\max\{v, bu+av\})^{\gamma+1}}\leq {V\over V^2+( y-U)^2}\leq  c_1  {\min\{v, bu+av\}\over (\max\{v, bu+av\})^{\gamma+1}}.
$$
3)  If $\max\{v, bu+av\}\leq  c_2^{-1}(1+|y|)^{1/\gamma},$ 
then
$$
{1\over c_1}  {  V\over (1+|y|)^2}\leq {V\over V^2+( y-U)^2}\leq  c_1  { V\over (1+|y|)^2}.
$$
4)  If $ c_2^{-1}   (1+|y|)^{1/\gamma}\leq v, bu+av\leq  c_2(1+|y|)^{1/\gamma},$
then
$${1\over c_1}  { 1\over (1+|y|)}\leq {V\over V^2+( y-U)^2}\leq  c_1  { 1\over (1+|y|)}.$$
5)  If $ \min\{v, bu+av\}\leq c_3^{-1}   (1+|y|)^{1/\gamma}$ and  
$c_2^{-1}   (1+|y|)^{1/\gamma}\leq  \max\{v, bu+av\}  \leq  c_2(1+|y|)^{1/\gamma},$
then
$$
 {1\over c_1}\leq {V\over V^2+( y-U)^2}: {(1+|y|)^{1/\gamma-1}\min\{v, bu+av\}   \over (\min\{v, bu+av\} )^2+  (\max\{v, bu+av\} -\rho )^2 } \leq  c_1  ,
$$
where   $\rho$ is a  real number which depends only on $y$ and on  $t:= \min\{v, bu+av\}$
which satisfies 
$ c_2^{-1}   (1+|y|)^{1/\gamma}\leq \rho\leq  c_2(1+|y|)^{1/\gamma}.$

In fact, $\rho(y,t)$ is defined  as  follows. When $c_3>1$ is large  enough,   then for  every $1\leq t\leq c_3^{-1}(1+|y|)^{1/\gamma},$
there exists  a    solution  $u:=u(y,t),\ v:=v(y,t)$ of the following   equation
$$  U=y,\qquad\text{where}\  U+iV=(u+iv)^\gamma$$
which satisfies
   $ c_2^{-1}(1+|y|)^{1/\gamma}\leq \max\{v(y,t), bu(y,t)+av(y,t)\}\leq c_2(1+|y|)^{1/\gamma}.$
  So we  define  $$\rho(y,t):=bu(y,t)+av(y,t).$$ 
\end{lemma}

 \section{Test  curves $\Cf_r,$ $\Cf_{r,N}\ldots$ and  second reduction}
 \label{S:test_curves}

We first introduce some families of algebraic  curves on $X$ and  a family of analytic  curves on 
an open neighborhood of a given  singular  point of $\Fc.$ 
 Next, we state basic estimates  and  deduce the main estimate
 from the former ones.  The proof of the basic estimates will be developed in  subsequent sections.   

Since $X$ is  projective, we may find  a  finite  family of surjective holomorphic maps $\Psi_j:\ X\to\P^2,$ $1\leq j\leq s,$ such that  for every point $x\in X,$ there is  at least one
map $\Psi_j$ which is locally biholomorphic at $x.$ Indeed, it suffices to embed $X$ into  $\P^N$ with $N$ large enough, and  choose a family of central projections from $X$ onto $\P^2.$

Now  fix $x_0\in E$ and assume that $\Psi:=\Psi_{j_0}:\ X\to\P^2$ is locally biholomorphic at $x_0.$ 
Moreover, suppose  without loss of generality that
$\Psi$ maps an open neighborhood $\V$ of $x_0$ biholomorphically onto the bidisc $\D^2\hookrightarrow \P^2$
and that $\Psi(x_0)=0$  with $0:=(0,0)\in\C^2.$
Let $(Z,W)$ be the    canonical coordinates of the  canonical  injection $\C^2\hookrightarrow\P^2,$
i.e.  $\C^2\simeq \{[Z:W:1]:\ (Z,W)\in\C^2\}\subset \P^2.$

  Since $x_0$ is a hyperbolic  singular point, we may assume  without loss of generality that 
there are holomorphic  coordinates $(z,w)$ defined on $\D^2$ such that $(z(0),w(0))=(0,0)=0$ and that the
the  foliation
$(\Psi|_{\V})_*\Fc$    is
associated with the vector field $$F(z,w) = z {\partial\over \partial z}
+ \lambda w{\partial\over \partial w}\quad\text{  with some complex number}\quad \lambda = a + ib,\quad b \not= 0.$$
So two  analytic curves $\{z=0\}$ and $\{w=0\}$  describe
 two separatrices $(\Psi|_{\V})_*\Fc$  at $0.$ By performing a  linear change of  coordinates, we may suppose  without loss of generality that
the complex line  $\{Z=0\} $  (resp.   $\{W=0\} $)
is tangent  to  the separatrice   $\{z=0\}$ (resp. $\{w=0\}$) at $0.$
By  dilating   the coordinates $(Z,W)$ if necessary, we may assume  without loss of generality that
the Jacobian matrix of $(Z,W)$ over $(z,w)$ at $(0,0)$  is the identity matrix, i.e.,
\begin{equation}\label{e:Jac}
 \left( \begin{array}{cc}
{\partial Z\over\partial z}(0,0) & {\partial Z\over\partial w}(0,0) \\
{\partial W\over\partial z}(0,0) &  {\partial W\over\partial w}(0,0) \end{array} \right)
=\left( \begin{array}{cc}
 1 &  0 \\
 0 &  1 \end{array} \right).
\end{equation}
In this  work we use  both systems of coordinates $(Z,W)$ and $(z,w).$ Each system  has  its  own advantages
and  drawbacks. Indeed, the coordinates $(Z,W)$ appears to be very useful  in our cohomological  argument, but this
argument  is   only of global nature. On the  opposite side, 
 although   the coordinates $(z,w)$  are not appropriate for a global  argument as the cohomological one, they seem to be  very convenient for doing a local  analysis near  singular points.

 Recall that $\Psi$ maps $\V$ biholomorphically onto $\D^2.$
 By shrinking $\D^2$   if necessary, the holomorphic  implicit function theorem, applied to  $\{z=0\}$,  allows us to write  for $(Z,W)\in\D^2,$
  \begin{equation}\label{e:z_expansion}
 z=\theta(Z,W)z_\infty(Z,W) ,
 \end{equation}
 where  
 $\theta(Z,W),\ z_\infty(Z,W)$ are holomorphic  functions on $\D^2$ with  
 $$z_\infty(Z,W)=Z-\sum_{j=2}^\infty a_j W^j,\qquad   a_j\in\C,$$
  and 
  \begin{equation}\label{e:z_expansion_bis}
1/2<|\theta(Z,W)|<2\quad\text{ on}\quad \D^2\quad  \text{and}\quad \theta(0,0)=1.
  \end{equation}
 
 \noindent{\bf $\bullet$ Analytic curves $\Cf_r$:}
 For  every $r\geq 0$ small  enough, let $\Cf_r$ be  the complex  analytic curve   in $\V$ given by
\begin{equation}\label{e:Cf_r}
\left\lbrace  x\in \V :\ z(\Psi(x))=r    \right\rbrace.
\end{equation}
Clearly, $\Cf_r\cap E=\varnothing$ for $r\not=0.$ 

 \noindent{\bf $\bullet$ Analytic curves $\Cf^d_N$:}
 For  every $N\in\N$ with $N>2$ and $d\in\C\setminus\{0\},$  let $\Cf^d_N$ be  the complex  analytic curve   in $\V$ given by
\begin{equation}\label{e:Cf_N}
\left\lbrace  x\in \V :\ z(\Psi(x))= d(w(\Psi(x)))^N    \right\rbrace.
\end{equation}
Clearly, $\Cf^d_N\cap E=\{x_0\}.$ 

\noindent{\bf $\bullet$ Algebraic curves $\Cf'_{r,N},$ $\Cf_{r,N}$:}
Given $r\geq 0$ and   $N\in \N$ with $N>2,$ 
 define $\Cf'_{r,N}$ to be  the  algebraic  curve  in $\P^2$ which
is the closure in $\P^2$
of the  following  affine   curve  
\begin{equation}\label{e:Cf'_r,N}
\left\lbrace (Z,W)\in\C^2:\ z_N(Z,W)=r   \right\rbrace\subset \C^2,
\end{equation}
where $z_N(Z,W)$ is the Taylor expansion  of order $N$   of $z(Z,W),$ i.e., 
\begin{equation}\label{e:z_N_expansion}
z_N(Z,W):= Z -\sum_{j=2}^{N-1}a_j W^j,\qquad  (Z,W)\in \D^2.
\end{equation}
Let $\Cf_{r,N}$ be  the  algebraic  curve  in $X$ given by
\begin{equation}\label{e:Cf_r,N}
\Cf_{r,N}:=(\Psi|_{\V})^*(\Cf'_{r,N}).
\end{equation}

Basic  geometric  properties of these  algebraic  curves  are collected in the following.
\begin{proposition}\label{P:cohomology}
For every $N\in \N$ with $N>2,$  there exists $0\leq r_N<1/2$  such that  
 \begin{enumerate}
\item[(i)] $\Cf_{0,N}\cap E=\{x_0\};$
 \item[(ii)]
  $\Cf_{r,N}\cap E=\varnothing $ for every  $0<r\leq r_N;$  
  \item[(iii)] $[\Cf_{r,N}]$ is  cohomologous  to $[\Cf_{0,N}]$ in $X$ for every  $0\leq r\leq r_N.$
\end{enumerate} 
\begin{proof}
First, recall  the  equation $
\Cf_{r,N}:=(\Psi|_{\V})^*(\Cf'_{r,N}).$
Consequently, observe that   $x_0\in 
\Cf_{0,N}$ as  $0\in  \Cf'_{0,N}$
and that  $x_0\not\in 
\Cf_{r,N}$ for $r>0$  as $0\not\in   \Cf'_{r,N}$
for   $r\not=0.$
This  discussion, combined with
 the fact that $E$ is a  finite set, implies that
 for $r_N>0$  small enough,  both properties (i) and (ii) are satisfied.
Finally, property (iii) follows from \eqref{e:Cf_r,N}
 and the fact that  two algebraic curves $\Cf'_{r,N}$ and $\Cf'_{0,N}$ of the  same degree $N$
 are cohomologous in $\P^2.$
\end{proof} 
\end{proposition}

 Let $
\rho_a:=  \big( \limsup_{j\to\infty} |a_j|^{1/j}\big)^{-1}\in (0,\infty].$
 So $\rho_a$ is the radius of convergence of the analytic function 
$z_N(Z,W)$ defined in \eqref{e:z_N_expansion}.
 Clearly,  $\rho_a\not=\infty,$ otherwise  the non-constant holomorphic map
$\C\ni W\mapsto (\sum_{j=2}^\infty a_j W^j,W)\subset \{z=0\}$ 
 contradicts   our assumption.
 \begin{remark}\rm  
 Together  with Lemma \ref{lem_poincare}, this is the place where the Brody  hyperbolic  assumption has fully been used
 (see also Remark \ref{R:Brody_sufficiency}). 
 \end{remark}
 For the  sake of clarity, we   may assume without loss of generality that
 $\rho_a=1.$
 In the sequel  we fix a  sequence  $N_j\nearrow\infty$ such that
 \begin{equation}\label{e:choice_N}
 \lim_{j\to\infty}  |a_{N_j}|^{1/N_j}= \limsup_{j\to\infty} |a_j|^{1/j}=\rho^{-1}_a = 1,
\end{equation}
 and we
always  choose $N=N_j$  for some $j$ large enough.

For  $r>0,$ recall  that $\B_r$ denotes the ball  centered at $0$ with radius $r$ in $\D^2\hookrightarrow X.$ 
The  basic  estimates  which are the main ingredients for the proof of     Theorem  \ref{T:main_estimate}
are  stated in the following four propositions. Their proofs will be  established in the  subsequent four sections.
 \begin{proposition}\label{P:comparison}
For every  $0<\delta<1,$ there is
  $c_\delta>1$ such that for every  harmonic   current $T,$  
$$
 c_\delta^{-1} G(x_0,r)\leq  \| T\wedge [\Cf_r]\|_{    \B_{r^\delta}}\leq c_\delta G(x_0,r) ,\qquad  0<r<1/2.
$$
Here  $G(x_0,r)$ is defined  by \eqref{e:G}. 
\end{proposition}
 
 \begin{proposition} \label{P:comparison_bis}
 For every $N$ large  enough   in the  sequence $(N_j)_{j=1}^\infty$ given in \eqref{e:choice_N}, there is
  a constant $c=c_N>1$ such that for every  harmonic   current $T,$  
$$
  \| T\wedge [\Cf_{0,N}]\|_{    \B_{r}}\leq c G(x_0,r) ,\qquad  0<r<1/2.
$$
 \end{proposition}

  \begin{proposition}\label{P:separatrices}
For every $N$ large  enough   in the  sequence $(N_j)_{j=1}^\infty$ given in \eqref{e:choice_N},  there are constants $c=c_N>1$ and $0<r_N<1/2$  such that
for every $0<r<r_N,$
 \begin{equation*} 
\Big | \|  T\wedge [\Cf_{0,N}] \|_{    \B_{r^{1/N}|\log r|^{-3/N}}}-\| T\wedge [\Cf_r]  \|_{\B_{r^{1/N}|\log r|^{-3/N}}  }\Big|  \leq c   | \log (-\log r)|  |\log r|^{-1}.
\end{equation*}
 \end{proposition}

\begin{proposition}\label{P:key_interpretations}
Let $N=N_j$ be large  enough in the  sequence $(N_j)_{j=1}^\infty$ given in \eqref{e:choice_N}.
 Then  the geometric intersection   $T\wedge [\Cf_r]$ (resp.  $T\wedge [\Cf_{0,N}]$) on  $    \B_{r^{1/N}|\log r|^{-3/N}}$ admits a coherent interpretation $(K^\alpha)_{\alpha\in\T}$  of the  form
$K^\alpha:=K_{-\log r,N}$  (resp. a  coherent interpretation   $({K^*}^\alpha)_{\alpha\in\T}$  of the  form ${K^*}^\alpha:=K^*_{-\log r,N}$).  
Here $$\R\ni y\mapsto K_{-\log r,N}(y)\qquad\text{ and}\qquad  \R\ni y\mapsto K^*_{-\log r,N}(y)$$  are functions such that
 there  are   constants $c,\kappa>1$ independent of $N$
  and a constant $c_N >1$  with  the following properties:
\begin{enumerate}
\item[(i)]
for $ (1+|y|)^{1/\gamma}\leq  \kappa^{-1} s,$ we have 
$ K^*_{s,N}(y) \leq  c     s^{1-\gamma}$ and   
$$c^{-1}\leq {K_{s,N}(y)\over  N^{\gamma-1} s^{1-\gamma}}\leq  c;$$  
\item[(ii)]
for $(1+|y|)^{1/\gamma}\geq \kappa s,$ we have 
$$c^{-1}\leq {K^*_{s,N}(y)\over   N(1+ | y|)^{1/\gamma -1}}\leq c \quad\text{and}\quad 
c^{-1}\leq {K_{s,N}(y) \over  (1+ | y|)^{1/\gamma -1}}\leq c  ;$$
\item[(iii)]
for $\kappa^{-1}s \leq (1+|y|)^{1/\gamma}\leq \kappa s,$ we have 
$$c^{-1}_N\leq {K^*_{s,N}(y)\over  (1+ | y|)^{1/\gamma -1}}\leq  c_N \quad\text{and}\quad  c^{-1}_N\leq {K_{s,N}(y) \over  (1+ | y|)^{1/\gamma -1}}\leq c_N  .$$  
\end{enumerate}
 \end{proposition}

 Now we  are in the position  to reduce  the proof of    Theorem \ref{T:main_estimate} to those  of 
Propositions
 \ref{P:separatrices} and  \ref{P:key_interpretations}   modulo Propositions \ref{P:comparison}, \ref{P:comparison_bis}. This is  the  second reduction.

\noindent{\bf  End of the proof of  Theorem \ref{T:main_estimate}.} Let $N\geq 1$ be large  enough.
By Proposition  \ref{P:separatrices},   
  there are constants $c_N$ and $r_N$   such that
for every $0<r<r_N,$
 \begin{equation*} 
 \|  T\wedge [\Cf_{0,N}] \|_{    B_{r^{1/N}(-\log r)^{-1/N}}}-\| T\wedge [\Cf_r]  \|_{B_{r^{1/N}(-\log r)^{-1/N}}  } 
  \leq c|\log(-\log{r})| |\log r|^{- 1}.
\end{equation*}
  By Proposition \ref{P:key_interpretations},
the  geometric intersections   $T\wedge [\Cf_r]$  
and   $T\wedge [\Cf_{0,N}]$ on  $    \B_{r^{1/N}|\log r|^{-3/N}}$ admit  coherent interpretations
$K_{-\log r,N}$ and $K^*_{-\log r,N}$ respectively.
Consequently, there  are two  functions $\vartheta,\ \vartheta^*:\ \R\to  [c'^{-1},c']$ for some
constant $c'>1$ such that 
  the above  inequality 
 can be  rewritten    as follows:
  \begin{equation*}
\int_{ \alpha \in\T}\Big(\int_{-\infty}^{\infty} 
(\vartheta^*(y)K^*_{-\log r,N}(y) -\vartheta(y)K_{-\log r,N}(y)) \tilde H_\alpha( y)
d y \Big)d\nu(\alpha)\leq  c|\log(-\log{r})| |\log r|^{- 1} .
 \end{equation*}
This  implies that
\begin{equation}\label{e:I_II}
I_2\leq  I_1+ I_3+ c|\log(-\log{r})| |\log r|^{- 1},
\end{equation}
where
\begin{equation*}
I_k:=  \Big |\int_{ \alpha \in\T}\Big(\int_{D_k} 
(\vartheta^*(y)K^*_{-\log r,N}(y) -\vartheta(y)K_{-\log r,N}(y)) \tilde H_\alpha( y)
d y \Big)d\nu(\alpha)\Big |,
\end{equation*}
with $D_1:=\{y\in\R:\ \kappa(1+|y|)^{1/\gamma}\leq -\log r\},$ and $D_2:= \{y\in\R:\ (1+|y|)^{1/\gamma}\geq -\kappa\log r\},$
  and  $D_3:=\{y\in\R:\ \kappa^{-1}(1+|y|)^{1/\gamma}\leq -\log r\leq
\kappa(1+|y|)^{1/\gamma}\}.$

Now we apply Proposition \ref{P:key_interpretations} (i)-(ii)-(iii)  to $I_1,$ $I_2$ and $I_3$
respectively.  Let  $N$ be large enough  in the  sequence \eqref{e:choice_N}  which  also satisfies
$N^{\min\{1,\gamma-1\}}\geq 2c^2.$ So we have that
\begin{eqnarray*}
I_1&\leq&(c^{-1}N^{\gamma-1}-c)\int_{\alpha\in\T}\Big (\int_{D_1} (-\log r)^{1-\gamma} \tilde H_\alpha(y)dy 
\Big)d\nu(\alpha),\\
I_2&\geq&(c^{-1}N-c)\int_{\alpha\in\T}\Big (\int_{D_2} ( 1+|y|)^{1/\gamma-1} \tilde H_\alpha(y)dy 
\Big)d\nu(\alpha),\\
I_3&\leq&(c'c_N-c'^{-1}c_N^{-1})\int_{\alpha\in\T}\Big (\int_{D_3} ( 1+|y|)^{1/\gamma-1} \tilde H_\alpha(y)dy 
\Big)d\nu(\alpha).
\end{eqnarray*}
 This, combined   with  \eqref{e:I_II} and  \eqref{eq_K_s}, implies 
that
\begin{multline*}
\int_{ \alpha \in\T}\Big(\int_{D_2} 
K_{-\log r}(y)\tilde H_\alpha( y)
d y \Big)d\nu(\alpha)
\lesssim |\log(-\log{r})| |\log r|^{- 1} \\
+\int_{ \alpha \in\T}\Big(\int_{D_1}
K_{-\log r}(y)\tilde H_\alpha( y)
d y \Big)d\nu(\alpha)
+\int_{ \alpha \in\T}\Big(\int_{D_3} 
K_{-\log r}(y)\tilde H_\alpha( y)
d y \Big)d\nu(\alpha).
\end{multline*}
Hence,
\begin{multline*}
\int_{ \alpha \in\T}\Big(\int_{D_1\cup D_2\cup D_3} 
K_{-\log r}(y)\tilde H_\alpha( y)
d y \Big)d\nu(\alpha)
\lesssim |\log(-\log{r})| |\log r|^{- 1}\\
 +\int_{ \alpha \in\T}\Big(\int_{D_1\cup D_3}
K_{-\log r}(y)\tilde H_\alpha( y)
d y \Big)d\nu(\alpha)
.
\end{multline*}
Since  we know  by Lemma \ref{lem_estimate_G} that  the left-hand side of the last line is equivalent to $G(r),$
   the  desired conclusion of 
 the  theorem follows  when  the constant $c_0$ is  large  enough.
\hfill $\square$

\section{Cohomological relation and third reduction}\label{S:coho}

We first state    several  basic estimates. Next, using  these estimate we establish a cohomological
invariance result (see Proposition \ref{P:coho_mass}). Finally, we deduce from this result Proposition
 \ref{P:separatrices}.
 Consequently,  modulo Propositions
 \ref{P:comparison}, \ref{P:comparison_bis},  \ref{P:mass_1}, \ref{P:mass_2}, the proof of    Theorem \ref{T:main_estimate} is  reduced to that  of 
Proposition \ref{P:key_interpretations}.
 This is  the  last reduction.

\begin{proposition}\label{P:mass_1}
For every $N$ large  enough in the  sequence $(N_j)_{j=1}^\infty$ given in \eqref{e:choice_N},
  there exist constants $ c=c_N>1,\delta=\delta_N>0$  and  a  constant $r_N$
satisfying  the conclusion of Proposition  \ref{P:cohomology}  with the  following properties.  
For every  $0<r<r_N $    
and for every   harmonic   current $T$ tangent to $\Fc$ of mass $1,$  
the following   mass estimates hold:
\begin{align}
\Big | \|  T\wedge [\Cf_{r,N}]\|_{X\setminus  \D^2} -  \|  T\wedge [\Cf_{0,N}]\|_{X\setminus  \D^2}\Big| &\leq cr^\delta,\label{e:mass_estimates_1}\\
\Big| \| T\wedge [\Cf_{r,N}]   \|_{  \D^2\setminus  \B_{r^{1/N}|\log r|^{3/N}}}-\| T\wedge [ \Cf_{0,N}] \|_{  \D^2\setminus  \B_{r^{1/N}|\log r|^{3/N}}}\Big |&\leq c  |\log r|^{-1}, \label{e:mass_estimates_2}\\
\Big | \|  T\wedge [\Cf_{r,N}] \|_{    \B_{r^{1/N}|\log r|^{-3/N}}}-\| T\wedge [\Cf_r]  \|_{    \B_{r^{1/N}|\log r|^{-3/N}}}\Big|  &\leq c |\log r|^{-1}.\label{e:mass_estimates_3}
\end{align}
\end{proposition}
We postpone  the proof of  Proposition \ref{P:mass_1} to Section  \ref{S:outside_corona}.

For $0<r<1/2$ and $N\geq 2,$  consider the corona
$$
\A_{r,N}:=  \B_{r^{1/N}|\log r|^{3/N}}\setminus   \B_{r^{1/N}|\log r|^{-3/N}}.
$$
So we obtain the following  partition of $X:$
\begin{equation}\label{e:partition}
X=(X\setminus\D^2)\coprod (\D^2\setminus   \B_{r^{1/N}|\log r|^{3/N} })\coprod \A_{r,N}\coprod  \B_{r^{1/N}|\log r|^{-3/N}}.
\end{equation}

\begin{proposition}\label{P:mass_2} For every $N$ large  enough in the  sequence  $(N_j)_{j=1}^\infty$ given in  \eqref{e:choice_N},
   there are constants $0<r_N\ll 1$ and $c=c_N>1$ such that  for every $0<r<r_N,$ 
  \begin{align}
 \| T\wedge [\Cf_{r}]\|_{\A_{r,N}}  &\leq c |\log (-\log r) | |\log r|^{-1},\label{e:mass_estimates_4}
  \\
 \| T\wedge [\Cf_{0,N}]\|_{\A_{r,N}}  & \leq c       |\log (-\log r)|  |\log r|^{-1},\label{e:mass_estimates_5}\\
  \| T\wedge [\Cf_{r,N}]\|_{\A_{r,N}}  &\leq c    | \log (-\log r)|  |\log r|^{-1}.\label{e:mass_estimates_6}
\end{align}
\end{proposition}
The proof  of Propostion  \ref{P:mass_2}  will  occupy  Section \ref{S:on_corona}.

 Taking for granted  these estimates, we want to prove  the following 
cohomological
invariance result.
\begin{proposition}\label{P:coho_mass}
   Let $r_N$ be given by Proposition  \ref{P:cohomology} for every $N\in \N$ with $N>2.$ Then  
 for every  $0<r<r_N $ and for every   harmonic   current $T,$   we have that
$ \|[\Cf_{r,N}]\wedge T\|_X= \|[\Cf_{0,N}]\wedge T\|_X.$  
  \end{proposition}
This result does  not follow from Proposition \ref{P:coh_inv}
 since $\Cf_{0,N}\cap E\not=\varnothing.$ 
 We need  the following   auxiliary result.
\begin{lemma}\label{L:mass_cont} We have $
\lim_{r\to 0+}\|  T\wedge [\Cf_{r,N}] \|_X=\|  T\wedge [\Cf_{0,N}] \|_X.$
\end{lemma}
\begin{proof}
Combining   estimates \eqref{e:mass_estimates_1}--\eqref{e:mass_estimates_2}, we get that
 \begin{multline*}
\Big| \| T\wedge [\Cf_{r,N}]   \|_X-\| T\wedge [ \Cf_{0,N}] \|_X\Big |\leq c  |\log r|^{-1}\\
+ \| T\wedge [\Cf_{r,N}]   \|_{  \B_{r^{1/N}|\log r|^{3/N}}}+\| T\wedge [ \Cf_{0,N}] \|_{  \B_{r^{1/N}|\log r|^{3/N}}}.
\end{multline*}
 In the  remainder of the proof we will  that the two terms in the last line tend to $0$ as $r\to 0+.$ 
This  will imply the lemma.

 Applying \eqref{e:mass_estimates_6} yields that
$$
\| T\wedge [\Cf_{r,N}]   \|_{  \B_{r^{1/N}|\log r|^{3/N}}}\leq   c |\log(-\log r)| |\log r|^{-1}+
\| T\wedge [\Cf_r]  \|_{    \B_{r^{1/N}|\log r|^{-3/N}}} .   
$$
Consequently, we infer that
\begin{eqnarray*}\lim_{r\to 0+}   \| T\wedge [\Cf_{r,N}]   \|_{\B_{r^{1/N}|\log r|^{3/N}}}
&\leq&  \lim_{r\to 0+} \| T\wedge [\Cf_r]  \|_{    \B_{r^{1/N}|\log r|^{-3/N}}}\\
&\leq&   \lim_{r\to 0+} \| T\wedge [\Cf_r]  \|_{   \B_{r^{1/2N}}}=0,
\end{eqnarray*}
where the last limit holds by Proposition \ref{P:comparison} applied to $\delta=1/(2N).$
Hence, $\lim_{r\to 0+}   \| T\wedge [\Cf_{r,N}]   \|_{\B_{r^{1/N}|\log r|^{3/N}}}=0.$

On the other hand, applying \eqref{e:mass_estimates_5} yields that
$$
\| T\wedge [ \Cf_{0,N}] \|_{  \B_{r^{1/N}|\log r|^{3/N}}}\leq     c |\log(-\log r)| |\log r|^{-1}+
\| T\wedge [ \Cf_{0,N}] \|_{  \B_{r^{1/N}|\log r|^{-3/N}}}.$$
Therefore, we deduce that
\begin{eqnarray*}
\lim_{r\to 0+} \| T\wedge [ \Cf_{0,N}] \|_{  \B_{r^{1/N}|\log r|^{3/N}}}&\leq&\lim_{r\to 0+} \| T\wedge [ \Cf_{0,N}] \|_{  \B_{r^{1/N}|\log r|^{-3/N}}}\\
&\leq  & \lim_{r\to 0+} \| T\wedge [ \Cf_{0,N}] \|_{  \B_{r^{1/N}|}}=0,
\end{eqnarray*}
where the last limit holds by Proposition \ref{P:comparison_bis} applied to $\delta=1/N.$
Hence, $\lim_{r\to 0+} \| T\wedge [ \Cf_{0,N}] \|_{  \B_{r^{1/N}|\log r|^{3/N}}}=0.$
\end{proof}

\noindent{\bf End of the  proof of Proposition \ref{P:coho_mass}.}
By Lemma \ref{L:mass_cont}, we have that
$$\|  T\wedge [\Cf_{0,N}] \|_X=\lim_{s\to 0+}\|  T\wedge [\Cf_{s,N}] \|_X.$$
On the other hand, by  Proposition \ref{P:cohomology} (ii)--(iii),
  $\Cf_{r,N}\cap E=\varnothing ,$ $\Cf_{s,N}\cap E=\varnothing, $ and    both $[\Cf_{r,N}]$  and
 $[\Cf_{s,N}]$ are  cohomologous.
Consequently, by Proposition  \ref{P:coho_mass},  the  right hand side of the last limit is  equal to $\|  T\wedge [\Cf_{r,N}] \|_X.$ 
 \hfill $\square$
 
\noindent{\bf End of the proof of Proposition \ref{P:separatrices}.}
Fix $N\in \N$ with $N>2$  and  let $r_N$ be given by Proposition \ref{P:mass_1} and Proposition  \ref{P:cohomology}.
So by Proposition \ref{P:coho_mass}, we have, for  $0<r<r_N,$
that
$$
 T\wedge [\Cf_{0,N}]= T\wedge [\Cf_{r,N}].
$$ 
 This, combined  with   estimates  \eqref{e:mass_estimates_1}--\eqref{e:mass_estimates_2}
and the partition \eqref{e:partition}, implies that
 \begin{equation*} 
\Big| \| T\wedge [\Cf_{r,N}]   \|_{  \B_{r^{1/N}|\log r|^{3/N}}}-\| T\wedge [ \Cf_{0,N}] \|_{ \B_{r^{1/N}|\log r|^{3/N}}}\Big |\leq c   | \log (-\log r)|  |\log r|^{-1}.
\end{equation*}
Putting this  together  with estimates \eqref{e:mass_estimates_5} and \eqref{e:mass_estimates_6} yields that
 \begin{equation*} 
\Big |  \|  T\wedge [\Cf_{r,N}] \|_{ \B_{r^{1/N}|\log r|^{-3/N} }  }-\| T\wedge [\Cf_{0,N}]  \|_{ 
\B_{r^{1/N}|\log r|^{-3/N} }   }\Big|  \leq c | \log (-\log r)|  |\log r|^{-1}.
\end{equation*}
This, coupled with  \eqref{e:mass_estimates_3}, gives that
 \begin{equation*} 
\Big | \|  T\wedge [\Cf_{0,N}] \|_{    \B_{r^{1/N}|\log r|^{-3/N}}}-\| T\wedge [\Cf_r]  \|_{\B_{r^{1/N}|\log r|^{-3/N}}  }\Big|  \leq c | \log (-\log r)|  |\log r|^{-1}.
\end{equation*}
 This  completes the proof.
 \hfill $\square$

\section{Intersection  of test curves with a leaf}\label{S:intersection_with_leaves}
In Section  \ref{S:test_curves}  we introduce the analytic curves
$\Cf_r,$ $\Cf^d_N$ which are defined on a neighborhood of a singular point of the foliation,  and the  algebraic curves $\Cf_{r,N}$ which are defined on the whole $X.$  
The main purpose of this  section is  to study the  distributions of these test curves with  the leaves of the foliations near  singularities.
 Therefore, in what follows, we restrict ourselves  to the  local model  of    Section \ref{S:test_curves}
and  Section \ref{S:parametrization}, and  we  keep the notation introduced therein.   
More specifically,   we may assume  without loss of generality that $x_0\equiv 0\in\D^2$ and that
there are holomorphic  coordinates $(z,w)$ defined on $\D^2$ such that $(z(0),w(0))=(0,0)=0$ and that the
the  foliation
$(\Psi|_{\V})_*\Fc$    is
associated with the vector field $F(z,w) = z {\partial\over \partial z}
+ \lambda w{\partial\over \partial w}$  with some complex number $\lambda = a + ib,$ $b \not= 0.$
Note that two  analytic curves $\{z=0\}$ and $\{w=0\}$  describe
 two separatrices $(\Psi|_{\V})_*\Fc$  at $0.$ Recall that 
  $\T\simeq \{\alpha\in\C:\  e^{-2\pi b}\leq |\alpha|\leq 1\}.$  
 
The  distribution of the intersection points of   $\mathfrak{C}_r$ with  a leaf  in the bidisc $\D^2$
is  quite simple  as the following result shows.
  \begin{lemma}\label{L:intersection_C_r} For each $0<r<1$ and each  $\alpha\in\T,$  
   the intersection  of $\frak{C}_r$ with the  Riemann surface
 $\widehat{L}_\alpha$ can be
parametrized, via \eqref{eq_parametrization_DS}, by 
\begin{equation}\label{e:para-inter_C_r}
\xi_{r,\alpha,k}=\psi_\alpha(\tau_{r,\alpha,k}),\quad\text{where}\quad 
\tau_{r,\alpha,k}=u_{\alpha,k}+iv_r:= 2k\pi- (\log {|\alpha|})/b+i(-\log r),\quad k\in \Z.
\end{equation}
\end{lemma}
\begin{proof}
Let $(z,w)=\psi_\alpha(\tau)$ with $\tau=u+iv$ be an intersection point of $\Cf_r$  with  the  Riemann surface $\widehat{L}_\alpha.$
Then $\tau$ is a solution of the  equation $r=z=e^{i(\tau+(\log  {|\alpha|})/b)}.$
Solving  this equation gives all the solutions \eqref{e:para-inter_C_r}.
\end{proof}

\begin{lemma}\label{L:para-inter_z=w^N}
Let $N\in\N\setminus\{0\}$  and $d\in\C\setminus\{0\}.$
 For each  $\alpha\in\T,$    
   the intersection  of $\Cf^d_N=\{z=dw^N\}$ with the Riemann surface
 $\widehat{L}_\alpha$ can be
parametrized, via \eqref{eq_parametrization_DS}, by 
\begin{equation}\label{e:para-inter_z=w^N}
\xi_{N,\alpha,k}=\psi_\alpha(\tau_{N,\alpha,k}),\quad\text{where}\quad 
\tau_{N,\alpha,k}=u_{N,\alpha,k}+iv_{N,\alpha,k} ,\quad k\in \N,
\end{equation}
and  $(u_{N,\alpha,k},v_{N,\alpha,k})$  is  the  unique solution of the following system
of linear equations:
\begin{equation}\label{e:system_u_and_v}
\begin{cases}
-(Na-1)u +Nbv  &= 2\pi k+  \arg d+ N\arg \alpha + b^{-1}(Na-1) \log|\alpha|\\
Nbu +(Na-1) v &=  \log|d|.
\end{cases}
\end{equation}
Moreover, let
\begin{equation}\label{e:t_N_alpha_k}
t_{N,\alpha,k}:= bu_{N,\alpha,k}+av_{N,\alpha,k},\qquad k\in\N.
\end{equation}
Then there  are  constants $v_{N}, t_{N}$  such that
\begin{equation}\label{e:difference_u_v}
v_{N,\alpha,k+1} -v_{N,\alpha,k}= v_N\quad\text{and}\quad t_{N,\alpha,k+1} -t_{N,\alpha,k}= t_N
\quad\text{for}\quad k\in\ \N\end{equation}
and that
\begin{equation}\label{e:approx_pace_u_v}
v_{N}\approx N^{-1}\quad\text{and}\quad t_{N}\approx N^{-2}.
\end{equation}
\end{lemma}
\begin{proof}
Let $(z,w)=\psi_\alpha(\tau)$ with $\tau=u+iv$ be an intersection point of $\Cf^c_N$  with  the  Riemann surface $\widehat{L}_\alpha.$
Then we deduce from   $
z=dw^N$ that $\tau$ is a solution of the  equation $$
  e^{i(\tau+(\log  {|\alpha|})/b)}=d\alpha^N e^{iN\lambda (\tau+(\log  {|\alpha|})/b)}.
$$
So there is  $k\in\Z$ such that
$$
i(\tau+(\log  {|\alpha|})/b)= 2i\pi k+\log |d|+i\arg d +N\log |\alpha| +iN \arg \alpha +iN(a+ib) (\tau+(\log  {|\alpha|})/b).
$$
Equating the  imaginary and the real parts of both sides, we obtain system \eqref{e:system_u_and_v}.

Writing $u_N:=u_{N,\alpha,k+1} -u_{N,\alpha,k},$ we infer from   \eqref{e:difference_u_v}
and  system \eqref{e:system_u_and_v}  that $(u_N,v_N)$ is a solution of the following system
\begin{equation*} 
\begin{cases}
-(Na-1)u +Nbv  &= 2\pi  \\
Nbu +(Na-1) v &=  0 .
\end{cases}
\end{equation*}
So we get that
$$
u_N={-2\pi(Na-1)\over (Na-1)^2+(Nb)^2},\quad v_N={2\pi Nb\over (Na-1)^2+(Nb)^2},\quad t_N ={2\pi b\over (Na-1)^2+(Nb)^2}.
$$
This  proves \eqref{e:difference_u_v}
 and \eqref{e:approx_pace_u_v}.
\end{proof}

 The  following result plays a vital role in this section. It allows us to approximate the function
 $z_N$ defined \eqref{e:z_N_expansion} efficiently. Consequently, we infer from this  result  a  good picture
 of the distributions of the intersection of $\Cf_{r,N}$ with a general leaf near singularities.
 \begin{proposition}\label{P:z_N_expression}
Let  $\N\ni N\mapsto M_N\in\N$ be  a sequence such that $\lim_{N\to\infty}M_N=\infty.$ Then,  for every $N\in\N$ large enough in the
sequence $(N_j)_{j=1}^\infty$ given in  \eqref{e:choice_N}, 
 there is a constant    $0<s=r_{N,M}<1$ 
such that 
 the  analytic functions 
$$z_\infty(z,w):=z_\infty(Z(z,w),W(z,w))\qquad \text{and}\qquad z_N(z,w):=z_N(Z(z,w),W(z,w)) ,$$
$z_\infty(Z,W)$ (resp. $z_N(Z,W)$) being the analytic function given in \eqref{e:z_expansion} (resp. \eqref{e:z_N_expansion}), are well-defined on  $\D^2$ (resp. $\D_s^2$) and  that
 the following two properties hold:
\begin{enumerate}
\item[(i)] For every   $w\in \D_s$ and every $0\leq r\leq  |w|/2,$ the equation
$z_N(z,w)=r$ with $|z|\leq |w|$   admits a unique  solution.

\item[(ii)] For every point  $(z,w)\in \D_s^2$  with  $z_N(z,w)=r$ for some $0\leq r<1,$    at least one of the following  two  items holds:
 \begin{enumerate}
 \item[(ii-a)]  $|z-r|\leq 4r^2$ and $|w|\leq r$ 
   
    \item[(ii-b)]  $
 |z_N(z,w)  -(z_\infty(z,w)+a_Nw^N)|\leq  M^{-1}_N|a_N||w|^N . $  
 \end{enumerate}
 \end{enumerate}
 \end{proposition}
 \begin{proof} 
 As  in Section  \ref{S:test_curves}, we may suppose  without loss of generality that
the complex line  $\{Z=0\} $  (resp.   $\{W=0\} $) in $\P^2$
is tangent  to  the separatrice   $\{z=0\}$ (resp. $\{w=0\}$) at $0.$
Therefore, arguing as in the proof of  \eqref{e:z_expansion}, we obtain the following  equation for $W:$  
 \begin{equation}\label{e:W_expansion}
 W=W(z,w)=\vartheta(z,w) \big(w+\sum_{j=2}^\infty b_j z^j\big),\qquad   b_j\in\C,
 \end{equation}
 where $\vartheta$ is  a holomorphic  function  on $\D^2$ such that
 \begin{equation}\label{e:W_expansion_bis}
1/2< |\vartheta(z,w)|<2\quad\text{on}\quad \D^2\quad\text{and}\quad \vartheta(0,0)=1.
 \end{equation}
 Let $\rho\in\R^+$ be such that
$ \limsup_{j\to\infty} |b_j|^{1/j} <\rho.$
This together with \eqref{e:choice_N}  gives an integer $N$ with $N>N_0$ and a constant $c>1$  such that
\begin{equation}\label{e:control_a_j_and_b_j}
|a_N|>2^{-N},\quad  |a_j|<2^j\ \text{for}\ j\geq N,\quad |b_k|<c\rho^k\ \text{for}\ k\geq N.
\end{equation}
Inserting \eqref{e:z_expansion} and \eqref{e:W_expansion} into \eqref{e:z_N_expansion}, we get that
\begin{equation}\label{eq_z_N_full_expansion} 
z_N(z,w)= z_\infty(z,w)+ \sum_{j=N}^\infty a_j W^j =  z_\infty(z,w)+ 
 \sum_{j=N}^\infty a_j \vartheta^j(z,w)( w+\sum_{k=2}^\infty b_k z^k   )^j.
\end{equation}
Now we prove  assertion (i).  We infer from \eqref{eq_z_N_full_expansion} and \eqref{e:control_a_j_and_b_j} and \eqref{e:W_expansion_bis} that  for $s$ small enough and $|z|\leq |w|\leq  s,$ 
\begin{eqnarray*}
|z_N(z,w)-z_\infty(z,w)|\leq  \sum_{j=N}^\infty |a_j| |\vartheta(z,w)|^j( |w|+\sum_{k=2}^\infty |b_k||w|^k   )^j = O(|w|^2)\ll |w|.
\end{eqnarray*}
This, combined with $r\leq |w|/2,$ implies that  
\begin{equation}\label{e:z_N_vs_z_infty}
|z_N(z,w)-z_\infty(z,w)|<|z_\infty(z,w)-r|\qquad\text{for}\qquad z\in\partial \D_{|w|}.
\end{equation}
Now  let  $0<s<1$ be  small  enough, and  fix $w\in \D_s,$ and fix $0\leq r <|w|/2.$
 Using \eqref{e:z_expansion}--\eqref{e:z_expansion_bis} and applying Rouch\'e's theorem to the functions  $z\mapsto z-r\theta \big(Z(z,w), W(z,w)\big)$  and $z\mapsto z$ on $\D_{|w|},$ we see easily that  the  function $z\mapsto z_\infty(z,w)-r$  admits a unique solution on $\D_{|w|}.$
Next, using  \eqref{e:z_N_vs_z_infty} we apply Rouch\'e's theorem to the functions $z\mapsto z_N(z,w)-r$ and  $z\mapsto z_\infty(z,w)-r$  on $\D_{|w|}.$ Consequently,
assertion  (i) follows.

In the remainder of the proof, we write $M$ instead of $M_N$ for the sake of simplicity.
To prove  assertion (ii) we take for granted  the  following 
 \\
 \noindent{\bf Fact. }{\it  When $s>0$ is small enough and $(z,w) \in (\D_s)^2$ with $z_N(z,w)=r$ does not satisfies  property  (ii-a), we have that $|w|\geq 8MN |\sum_{k=2}^\infty b_k z^k|.$}

  Using  \eqref{e:W_expansion} and then the above fact, we  see that
 \begin{equation}\label{e:a_NW^N-a_Nw^N}
 \begin{split}
 |\vartheta^{-N}(z,w)W^N-w^N|&\leq\sum_{p=1}^N {N\choose p} |w|^{N-p}  |\sum_{k=2}^\infty b_k z^k|^p\\
 &\leq \big( \sum_{p=1}^N {N\choose p}   (8MN)^{-p}\big ) |w|^{N}\\
  &\leq   \big((1+8^{-1}M^{-1}N^{-1})^N-1\big)|w^N|\\
  &\leq (e^{8^{-1}K^{-1}}-1) |w^N|\leq   6^{-1}M^{-1}|w^N|.
  \end{split}
 \end{equation}
 Moreover, for $s=r_{N,M}>0$ small  enough, we infer from \eqref{e:W_expansion}--\eqref{e:W_expansion_bis} and the  continuity of $\vartheta$   that for $(z,w) \in \D_s^2,$
\begin{equation}\label{e:vartheta_WN}
  |\vartheta^{-N}(z,w)W^N-W^N| < 12^{-1} M^{-1}|W^N| <   4^{-1} M^{-1}|w^N|,
\end{equation} 
where the last  estimate  follows from \eqref{e:a_NW^N-a_Nw^N}.
 On the other hand, using the second inequality in \eqref{e:control_a_j_and_b_j}, \eqref{e:W_expansion},
 \eqref{e:W_expansion_bis} and then the above fact, we  see that 
 \begin{eqnarray*}
 \big|\sum_{j=N+1}^\infty a_j W^j\big|&\leq &\sum_{j=N+1}^\infty 4^j(|w|+ |\sum_{k=2}^\infty b_k z^k|)^j\\ 
 &\leq&\sum_{j=N+1}^\infty 4^j(1+ 8^{-1}M^{-1}N^{-1})^j|w|^j\\
 &\leq&  2^{-N-1}M^{-1}|w|^N\\
&\leq & 2^{-1}|a_Nw^N|, 
 \end{eqnarray*}
 where  the third inequality holds when $s=r_{N,M}>0$ is small enough, and the last one  follows
 from the  first inequality in \eqref{e:control_a_j_and_b_j}.
 This, combined  with \eqref{e:a_NW^N-a_Nw^N} and \eqref{e:vartheta_WN}, yields that
 \begin{eqnarray*}
 && |(z_\infty(z,w)+ \sum_{j=N}^\infty a_j W^j) -(z_\infty(z,w)+a_Nw^N)|\\
 &\leq&  
|a_N \vartheta(z,w)^{-N}W^N-a_Nw^N|+|a_N \vartheta(z,w)^{-N}W^N-a_N W^N|
+|\sum_{j=N+1}^\infty a_j W^j|\\
& \leq& M^{-1}|a_Nw^N|.
 \end{eqnarray*}
 Since  we know  by \eqref{e:z_expansion} and  \eqref{e:z_N_expansion} that the left hand side of the last line  is equal to $
 |z_N(z,w)  -(z_\infty(z,w)+a_Nw^N)|,$ assertion (ii-b) and hence  the  proposition  follow modulo the above fact.
 
 Now we turn to the proof of this fact. Suppose in order to reach a contradiction that
 \begin{equation}\label{e:contra}
|w|\leq 8MN |\sum_{k=2}^\infty b_k z^k|.
\end{equation}
This, coupled with \eqref{eq_z_N_full_expansion} and \eqref{e:W_expansion}, 
 implies that
\begin{eqnarray*}
|z_\infty(z,w)-z_N(z,w)|&\leq &\sum_{j=N}^\infty |a_j||\vartheta^j(z,w)|( |w|+|\sum_{k=2}^\infty b_k z^k|   )^j\\
&\leq &\sum_{j=N}^\infty 4^j (1+8MN)^j \big|\sum_{k=2}^\infty b_k z^k\big |^j   \\
&\leq & \sum_{j=N}^\infty 4^j (1+8MN)^j c^j|z|^{2j} \big(\sum_{k=2}^\infty \rho^k |z|^{k-2}\big )^j,
\end{eqnarray*}
 where the second  inequality holds by the second inequality in \eqref{e:control_a_j_and_b_j}, \eqref{e:W_expansion_bis} and
\eqref{e:contra}, the last one  by the third inequality in \eqref{e:control_a_j_and_b_j}.
 Hence, we infer that for $0<s<1$ small enough, 
\begin{equation}\label{e:z_vs_z_N}
|z_\infty(z,w)-z_N(z,w)|\ll |z|^2.
\end{equation}
 Suppose now that the point $(z,w)\in (\D_s)^2$  satisfies the assumption  of  assertion (ii). 
We infer from   \eqref{e:z_vs_z_N}  that
$r=|z_N(z,w)|\geq  |z_\infty(z,w)|-|z|^2.$
Since $s$ is  small  enough, we infer from  \eqref{e:z_expansion}--\eqref{e:z_expansion_bis} that $z_\infty(z,w)/z$ is close to $1.$ So $|z|\leq  2r.$ Hence,
 \eqref{e:z_vs_z_N} implies that  $|z_\infty(z,w)-r|\leq  4r^2.$
 Moreover, \eqref{e:contra}, combined with $|z|\leq  2r,$ implies that $|w|\lesssim |z|^2\ll r.$
 Hence,  we  obtain property (ii-a) which is the desired contradiction.
The proof of assertion (ii) is thereby completed.
\end{proof}

In what follows  by shrinking $\D^2$ if necessary, we may  assume without loss of generality that
the vector field $F$  at the beginning of the section is  defined on the 
bidisc $(e\D)\times(e^{|\lambda|}\D).$  

 \begin{definition}\label{D:points_compatible}\rm  
 Two points  $x_1=(z_1,w_1)$ and  $x_2=(z_2,w_2)\in(\D\setminus\{0\})^2$ are said to be  {\it quasi-compatible} if
 there is $t\in \C$   such that
 $z_2=z_1e^t$ and  $w_2=w_1e^{\lambda t}.$ Clearly, $x_1$ and $x_2$ are on the  same  leaf.
 If, moreover, we can choose $t$ with $|t|<1,$ then  we say that  $x_1$ and $x_2$ are   {\it compatible}.
 
 Given two quasi-compatible points  $x_1=(z_1,w_1)$ and  $x_2=(z_2,w_2)\in\D^2,$
 the {\it compatible pseudo-distance} between them, denoted by $\dist_C(x_1,x_2),$ is  defined by
 $$
\dist_C(x_1,x_2):= \max\left\lbrace { |z_1-z_2|\over |z_1|},{ |z_1-z_2|\over |z_2|},{ |w_1-w_2|\over |w_1|}, { |w_1-w_2|\over |w_2|}\right\rbrace.
 $$
\end{definition}

 \begin{lemma} \label{L:property_compatible} Let $x,x'\in(\D\setminus\{0\})^2$ be two compatible points.
 Let  $t\in\C$  such that  $z_2=z_1e^t$ and  $w_2=w_1e^{\lambda t}$ with $|t|$   smallest  possible.
 Then
 \begin{enumerate}
 \item[(i)]  $|z|\approx |z'|,$ $|w|\approx |w'|,$ $\|x\|\approx \|x'\|,$ and 
 $$\dist_C(x_1,x_2)\approx{|z-z'|\over |z|}\approx {|w-w'|\over |w|} \approx  {\|x-x'\|\over \|x\|}\approx  |t|;$$ 
 \item[(ii)] there is a constant $c>1$ such that
 \begin{equation*}
  c^{-1}{\|x-x'\|\over -\|x\| \log^* \|x\|}\leq  \dist_P(x,x')\leq   c{\|x-x'\|\over -\|x\|\log^* \|x\|}.
 \end{equation*}
 \end{enumerate}
 \end{lemma}
 \begin{proof}
 Assertion (i) is an immediate consequence of Definition \ref{D:points_compatible}.
 
 To prove   assertion (ii),  let $\omega\in\Omega$ be a path  such that 
   there is a differentiable  function $s\ni[0,1]\mapsto \zeta(s)\in \D$ satisfying 
   $$\omega(s):= (z e^{\zeta(s) },we^{\lambda\zeta(s) })\in (e^1\D)\times(e^{|\lambda|}\D)  \quad\text{ for} \quad s\in[0,1].
  $$
   and $\zeta(0)=0$ and  $\zeta(1) = t.$
Hence,  $\omega(0)=x$ and $\omega(1)=x'.$
   By   Lemma \ref{L:property_compatible} (i), we get $|\log^* \| \omega(s)\||\approx  |\log^*\|x\||$ for $s\in[0,1].$ 
Therefore,  by integrating  along  the  path $[0,1]\ni s\mapsto \zeta(s)$ 
and  
applying Part 2) of Lemma  \ref{lem_poincare}, we get that
\begin{eqnarray*}
\dist_P(\omega:0,1)&=&\int_{\omega[0,1]} \sqrt{g_P(z)}\\
& =& \int_0^1\zeta^*(\psi^*_x  (\sqrt{g_P}))  
 = \int_{\zeta[0,1]}  (\log^*\|x\|)^{-1} ds\\
&\gtrsim& {|t|\over -\log^* \|x| }\approx {\|x-x'\|\over -\|x\|\log^* \|x\|}. 
\end{eqnarray*}
 When $\zeta(s):=st$ for $s\in[0,1],$   $\gtrsim$ above becomes $\approx.$  
This    implies  assertion (ii).   
\end{proof}

 In  the remainder of this section we  consider the function
 \begin{equation}\label{e:choice_K_N}  M_N:=8^N,\qquad  N\in\N. 
\end{equation}
  By the  first   inequality in \eqref{e:control_a_j_and_b_j},  this choice   ensures that
  $M_N^{-1}\ll |a_N| .$
Moreover,   we  take $N$ so large  in the
sequence $(N_j)_{j=1}^\infty$ given in  \eqref{e:choice_N} that
   $N$ and the constant $M=M_N$  satisfy the conclusion 
of Proposition  \ref{P:z_N_expression}.  
 Let  $0<r_N:=r_{N,M}<1$ be given by this proposition.

\begin{lemma}\label{L:near_by_z=w^N}
Let $N\in\N $ be  as above, let $d:=-a_N,$  where $a_N$ is  introduced in
\eqref{e:z_N_expansion}
 and $\alpha\in\T.$ 
 Let $\xi_{N,\alpha,k}$ $(k\in\N)$ be
   the intersection  of $\Cf^d_N=\{z=dw^N\}$ with the  Riemann  surface
 $\widehat{L}_\alpha$ described  by \eqref{e:para-inter_z=w^N}.
   Then the intersection of the curve $\frak{C}_{0,N}$ with  
 $\widehat{L}_\alpha$ can be enumerated as $\xi_{0,N,\alpha,k}$  $(k\in\N)$
such that $\xi_{N,\alpha,k}$ and $\xi_{0,N,\alpha,k}$ are compatible  and that
\begin{equation*}
  \dist_C(\xi_{N,\alpha,k},\xi_{0,N,\alpha,k})\leq c N^{-1}\quad \text{for}\quad k\in \N.
\end{equation*}
Here $c>1$ is a constant independent of $N,$ $\alpha$ and $k.$ 
\end{lemma}
\begin{proof} We need to prove that  for  every
point $\xi_1\in\Cf^d_N\cap\widehat{L}_\alpha$  (resp. $\xi_1\in
\Cf_{0,N}\cap\widehat{L}_\alpha$),  there is  exactly one point $\xi_2\in
\Cf_{0,N}\cap\widehat{L}_\alpha$ (resp. $\xi_2\in\Cf^d_N\cap\widehat{L}_\alpha$) such that $\xi_1$ and $\xi_2$ are  compatible and that
\begin{equation}\label{e:xi_vs_xi'}
  \dist_C(\xi_1 ,\xi_2)\lesssim N^{-1}.
\end{equation}
We will only  show that for  every
point $\xi_1\in\Cf^d_N\cap\widehat{L}_\alpha,$ there is  exactly one point $\xi_2\in
\Cf_{0,N}\cap\widehat{L}_\alpha$ satisfying \eqref{e:xi_vs_xi'}
since the other  assertion can be proved similarly. Let $s_0:=r_{N}.$   

Write $\xi_1=(z_1,w_1).$ So $z_1=-a_Nw_1^N.$
We need to  find $\xi_2=(z_2,w_2)\in\Cf_{0,N}$  which is  compatible with $\xi_1$
in the sens of Definition \ref{D:points_compatible}.
By \eqref{e:Cf'_r,N} and \eqref{e:Cf_r,N},
the  membership $\xi_2=(z_2,w_2)\in\Cf_{0,N}$ is equivalent to $z_N(z,w)=0.$
Therefore, applying  Proposition \ref{P:z_N_expression} (i) to $r=0,$ we may find a unique $z=f(w)$
such that $|z|\leq|w|$ and  that $z_N(z,w)=0.$ 
Clearly, $\D_s\ni w\mapsto f(w)$ is a holomorphic  function.
  Using the function $\theta$ given in  \eqref{e:z_expansion}, we introduce the following holomorphic function 
  $$  \theta_N(w):=\theta\big(Z(f(w),w),  W(f(w),w)\big), \qquad\text{for}\qquad  w\in\D_{s_0}.
$$
By \eqref{e:z_expansion_bis}, we get that
\begin{equation}\label{theta_N}
1/2<|\theta_N(w)|<2\quad\text{and}\quad \lim_{w\to 0} \theta_N(w)=\theta_N(0)=1,
\end{equation}
the limit being  uniform in $N.$
 By Proposition \ref{P:z_N_expression} (ii) with $r=0$  and  \eqref{e:z_expansion}, we may write
\begin{equation}\label{e:f_theta}
f(w)= \theta_N(w)(-a_Nw^N+g(w))\qquad\text{for}\qquad  w\in \D_{s_0},
\end{equation}
 where $g$ is a  holomorphic function on $\D_{s_0}$ which  satisfies $ |g(w)|\leq M^{-1}|a_N w^N|,$ $ w\in\D_{s_0}.$

In order to find $\xi_2=(z_2,w_2)\in\Cf_{0,N}$  which is  compatible with $\xi_1,$
 we write $z_2=e^tz_1,$ $w_2=e^{\lambda t}w_1$ for some $0<|t|\ll 1.$
 We deduce from this  and  from  $(z_2,w_2)\in\Cf_{0,N}$    that
 $f( e^{\lambda t}w_1)=e^tz_1.   $
 Since $z_1=-a_Nw_1^N,$ it follows that $t$ is  a root of  the  following holomorphic function
 on  the disc $\D_s,$   $s\in (0,s_0)$ being a number whose  
exact value will be determined later on:
 \begin{equation}\label{e:F(t)}
 F(t):=-a_Ne^{\lambda Nt}w_1^N+g(e^{\lambda t}w_1)+a_Ne^tw_1^N \theta^{-1}_N(e^{\lambda t} w_1),\qquad  t\in  \D_s.
 \end{equation}
 Consider  the holomorphic function 
\begin{equation*}
H(t):= -a_Ne^{\lambda Nt}w_1^N +a_Ne^tw_1^N,\qquad t\in  \D_s.
\end{equation*}
Observe that  $H(t)=0$ if and only if $t={2i\pi k\over \lambda N-1}$ for $k\in\Z.$
So we  choose   the  constant $s$ as  follows:
$$s=c' {\pi \over |\lambda N-1|}\qquad \mbox{for $ c'>0$ a constant independent of $N,r$}.$$
 Hence $H$ has the unique root $t=0$ on $\D_s.$ 
 On the  other hand, observe  that
\begin{equation*}
H(t)=a_Nw_1^N( (-\lambda N+1)t +O(t^2)),\qquad\text{where}\ O(t^2)\ \text{depends on}\ N.
\end{equation*}
Consequently,  when  the constant $ c' $ (being independent of $N,r$) is  small enough,
$$
|H(t)|\approx |a_Nw_1^N|\quad\text{and}\quad   |a_Nw_1^N|<|H(t)| \quad \text{for}\quad  t\in \partial \D_s.
$$
Using this, 
we can show that  for $N$ large enough and   $t\in\partial \D_s,$
\begin{eqnarray*}
|F(t)-H(t)|&\leq& |g(e^{\lambda t}w_1)|+|a_Ne^tw_1^N| | \theta^{-1}_N(e^{\lambda t} w_1)-1|\\
&\ll &|a_Nw_1^N|< |H(t)|,
\end{eqnarray*}
where the first inequality holds by  
   the  uniform limit (with respect to  $N$) in \eqref{theta_N} and 
the  estimate $|g(e^{\lambda t}w_1)|\leq M^{-1}|a_Ne^{\lambda Nt}w_1^N|.$

So $|G(t)-H(t)|<H(t)$ on $\partial \D_s,$ and hence 
  by Rouch\'e's theorem,   $G$   has  a unique  root on $\D_s.$ 
Consequently, there is a unique $t\in \D_s$ such that $F(t)=0,$ i.e., there is a unique  
$\xi_2=(e^tz_1,e^{\lambda t}w_1)\in\Cf_{0,N}$  with $|t|\leq  s.$ Since $s \approx  N^{-1},$
 \eqref{e:xi_vs_xi'} follows from Lemma \ref{L:property_compatible}.
\end{proof}
\begin{lemma}\label{L:near_C_0,N}
 Let $N\in\N $ be  as above 
 and $\alpha\in\T.$ 
Let $\xi_{0,N,\alpha,k}$ $(k\in\N)$ be
   the intersection points of $\frak{C}_{0,N}$ with the  Riemann surface
 $\widehat{L}_\alpha$ described  by  Lemma \ref{L:near_by_z=w^N}.
   Then there is a constant $c_N>1$   independent of $\alpha$  
   satisfying  the following properties for every $0<r<r_N:$
 \begin{enumerate}
\item[(i)] the intersection of the curve $\frak{C}_{r,N}$ with   the  Riemann surface
 $\widehat{L}_\alpha$  inside  $ (r_N\D)^2\setminus \B_{r^{1/N} |\log r|^{3/N}}$ can be enumerated as $\xi_{r,N,\alpha,k}$  
such that $\xi_{r,N,\alpha,k}$ and $\xi_{0,N,\alpha,k}$ are compatible,
where $ k\in \N$  such that $\xi_{0,N,\alpha,k}\in(r_N\D)^2\setminus \B_{r^{1/N} |\log r|^{3/N}}; $
\item[(ii)] for every $ k\in \N$  with $\xi_{0,N,\alpha,k}\in (r_N\D)^2\setminus\B_{r^{1/N} |\log r|^{3/N}}  , $
\begin{equation*}
\dist_C(\xi_{r,N,\alpha,k},\xi_{0,N,\alpha,k })\leq c_N |\log r|^{-3}.
\end{equation*}
\end{enumerate}
\end{lemma}
\begin{proof}
We need to prove that  for  every
point $$\xi_1\in\big(\Cf_{0,N}\cap\widehat{L}_\alpha\big)\cap  \big((r_N\D)^2\setminus \B_{r^{1/N} |\log r|^{3/N}}\big)$$
$$  \Big(\text{resp.} \qquad\xi_1\in
\big(\Cf_{r,N}\cap\widehat{L}_\alpha\big)\cap  \big((r_N\D)^2\setminus  \B_{r^{1/N} |\log r|^{3/N}}\big)\qquad\Big)  ,$$  
there is  exactly one point 
$$\xi_2\in
\big(\Cf_{r,N}\cap\widehat{L}_\alpha\big)   \cap  \big((r_N\D)^2\setminus    \B_{r^{1/N} |\log r|^{3/N}}\big) $$ 
$$\Big(\text{resp.}\qquad \xi_2\in\big(\Cf_{0,N}\cap\widehat{L}_\alpha\big)
  \cap  \big((r_N\D)^2\setminus  \B_{r^{1/N} |\log r|^{3/N}}\big)\qquad\Big)$$
 such that $\xi_1$ and $\xi_2$ are  compatible and that
\begin{equation}\label{e:xi_vs_xi'_bis}
  \dist_C(\xi_1 ,\xi_2)\lesssim |\log r|^{-3}.
\end{equation}
We will only  show that for  every
point 
$$\xi_1\in\big(\Cf_{0,N}\cap\widehat{L}_\alpha\big) \cap  \big((r_N\D)^2\setminus \B_{r^{1/N} |\log r|^{3/N}}\big),$$ there is  exactly one point 
$$\xi_2\in
\big(\Cf_{r,N}\cap\widehat{L}_\alpha\big) \cap  \big((r_N\D)^2  \setminus \B_{r^{1/N} |\log r|^{3/N}}\big)$$ satisfying \eqref{e:xi_vs_xi'_bis}
since the other  assertion can be proved similarly. Let $s_0:=r_{N}.$  

Let $f$ and $g$   be the holomorphic functions on $\D_{s_0}$ introduced in the proof of Lemma \ref{L:near_by_z=w^N}  (see \eqref{e:f_theta}).  Since $g$  satisfies $ |g(w)|\leq M^{-1}|a_N w^N|,$ $ w\in\D_{s_0},$ $f$ admits  the following Taylor expansion:
\begin{equation}\label{e:f_theta_bis}
f(w)= -\tilde{a}_Nw^N+h(w)\qquad\text{for}\qquad  w\in \D_{s_0},
\end{equation}
 where  $h(w)=O(w^{N+1}).$ By \eqref{e:choice_K_N}  we get that 
\begin{equation}\label{e:quotient}
|\tilde{a}_N/a_N -  1|< 2^{-N}\qquad\mbox{for $N$ large enough.}
\end{equation}
Write $\xi_1=(z_1,w_1).$ Since $\xi_1\in \Cf_{0,N},$ the previous lemma  implies that $z_1=f(w_1).$
Recall  from  Section \ref{S:test_curves} the  coordinates $(Z,W).$ Under the  coordinates $(Z,W),$
we infer from \eqref{e:Cf'_r,N}, \eqref{e:z_N_expansion}
 and \eqref{e:Cf_r,N} the following simple correspondence between  $\Cf_{0,N}$ and $ \Cf_{r,N}:$
 \begin{equation}\label{e:correspondence}
   (Z,W)\in  \Cf_{0,N} \qquad\Longleftrightarrow \qquad  (r+Z,W)\in \Cf_{r,N} .
 \end{equation}
 In order to exploit this  nice correspondence  under the coordinates $(z,w),$ we introduce
 the   holomorphic  function $R_r$  given by the following relation
 \begin{equation}\label{e:Z_r}
 Z(R_r(w)+f(w),w)- Z(f(w),w)=r,\qquad  w\in \D_{s_0}.
 \end{equation}
  Recall  from \eqref{e:Jac} that the Jacobian matrix of $(Z,W)$ over $(z,w)$ at $(0,0)$ is the identity matrix.
  Consequently, using the Taylor expansion of $Z(z,w)$ and substituting $f(w)$ (resp. $R_r(w)+f(w)$) for $z,$     we infer from \eqref {e:Z_r} 
 that
  \begin{equation}\label{e:R_r}
  R_r(w) + O(R^2_r(w)) +O (R_r(w)f(w))= r,\quad\text{for}\quad  w\in \D_{s_0}.
  \end{equation}
  
 We need to  find $\xi_2=(z_2,w_2)\in\Cf_{r,N}$  which is  compatible with $\xi_1.$
Write $z_2=e^tz_1,$ $w_2=e^{\lambda t}w_1$ for some $0<|t|\ll 1.$
 We deduce from this  and  from  $(z_2,w_2)\in\Cf_{r,N}$  and \eqref{e:Z_r}   that
 \begin{equation}\label{e:xi_vs_xi2}
R_r(e^{\lambda t}w_1)+f( e^{\lambda t}w_1)=e^tz_1.   
\end{equation}
In the sequel, $s\in (0,s_0)$ is a number whose  
exact value will be determined later on. Since $z_1=f(w_1),$ it follows from the last line and  \eqref{e:f_theta_bis} that $t$ is  a root of  the  following holomorphic function on $\D_s$    defined   by 
  \begin{equation}\label{e:F(t)_bis}
 F(t):=R_r(e^{\lambda t}w_1)  -\tilde{a}_Ne^{\lambda Nt}w_1^N+h(e^{\lambda t}w_1)+ \tilde{a}_Ne^tw_1^N -e^th(w_1),
\qquad   t\in \D_s.
 \end{equation}

On the other hand, since $(z_1,w_1)\in \Cf_{0,N},$ we get by Proposition \ref{P:z_N_expression} with $r=0$  that
$2|a_Nw^N_1|\geq |z_1|$. This together  with  the second inequality in \eqref{e:control_a_j_and_b_j} 
imply $ |z_1|\leq 2^{N+1} |w_1|.$
Since  $(z_1,w_1)\not\in  \B_{r^{1/N} |\log r|^{3/N}},$
it follows that  $|w_1|\geq  2^{-N-1} r^{1/N} |\log r|^{3/N}.$
This together  with  the first inequality in \eqref{e:control_a_j_and_b_j} yield that
$$|a_Nw_1^N| \geq 2^{-N} |w_1|^N\geq  2^{-N(N+1)}r|\log r|^3.$$
Hence, there is $c_N>1$  such that 
\begin{equation}\label{e:r(logr)-3}
r<  c_N|\log r|^{-3}|a_Nw_1^N|.
\end{equation}
Now  we choose $M$ large enough ($M$ depending on $N$), 
 and  $0<s< s_0 $  such that
\begin{equation}\label{e:s}
s:=c'  |\log r|^{-3} \qquad \mbox{for $ c'=c'_N>0$ a large constant independent of $r$}.
\end{equation}
Then  we deduce  from \eqref{e:r(logr)-3}, \eqref{e:s}
and \eqref{e:R_r}, \eqref{theta_N}, \eqref{e:f_theta} and   \eqref{e:quotient}  that for $r>0$ small enough,
\begin{equation}\label{e:R_r_bis}
R_r(w)=r+o(r) \qquad\text{and}\qquad  r\ll s|\tilde{a}_Nw_1^N|.
\end{equation}
Consider  the holomorphic function 
\begin{equation*}
H(t):= -\tilde{a}_Ne^{\lambda Nt}w_1^N +\tilde{a}_Ne^tw_1^N,\qquad t\in  \D_s.
\end{equation*}
Observe that  $H$ has the unique root $t=0$ on $\D_s.$ 
 Moreover, when the constant $c'$  is large  enough, we  have  that
\begin{equation}\label{e:esti_G_zeros}
|H(t)|\approx s|\tilde{a}_Nw_1^N| \quad \text{and}\quad  |H(t)| > s|\tilde{a}_Nw_1^N|,\qquad\text{for}\qquad t\in\partial \D_s.
\end{equation}
We also  infer from \eqref{e:f_theta_bis} that
\begin{equation*}
 h(e^{\lambda t}w) -e^th(w)= O(tw^{N+1})     \qquad\text{for}\qquad  w\in \D_{s_0},
\end{equation*}
 where  $O(\cdot)$ depends on $N.$ 

Putting this together   with  the definition of $F$ and $H$ and  \eqref{e:esti_G_zeros} and  \eqref{e:R_r_bis},      a straightforward computation shows that 
   for $t\in\partial \D_s,$
\begin{eqnarray*}
|F(t)-H(t)|&\leq&
|R_r(e^{\lambda t}w_1)|  + | h(e^{\lambda t}w_1)) -e^th(w_1)|\\
&\leq & s|\tilde a_Nw_1^N|
<|H(t)|.
\end{eqnarray*} 
Using  this, we can   apply Rouch\'e's theorem to $F$ and $H.$ Consequently,  $F$  has  a unique  root on $\D_s.$
Therefore, there is a unique $t\in \D_s$ such that $F(t)=0,$ i.e., there is  a unique  
$\xi_2=(e^tz_1,e^{\lambda t}w_1)\in\Cf_{r,N}$  with  $|t|\leq s.$  Since $ s \approx  |\log r|^{-3},$
 \eqref{e:xi_vs_xi'_bis} follows from Lemma \ref{L:property_compatible}.
\end{proof}

In order to prove  the last part of  Proposition  \ref{P:mass_1}, the following lemma gives us the  discrepancy  between  the intersection points  of  a leaf with the algebraic  
curve $\frak{C}_{r,N}$  and with the analytic  curve $\mathfrak{C}_r$
inside the ball  $\B_{r^{1/N}|\log r|^{-3/N}}.$
\begin{lemma}\label{L:discrepancy_C_r,N_and_C_r}
Let $N\in\N$ be  as above   and $\alpha\in\T.$   
 Let $\xi_{r,\alpha,k} $ $(k\in\N)$ be
   the intersection  of  the  analytic curve $\frak{C}_r$ with the Riemann surface
 $\widehat{L}_\alpha$ described  by  Lemma \ref {L:intersection_C_r}.  Then there is a constant $c_N>1$  large  enough independent of $\alpha$  
 satisfying  the following properties for every $0<r<r_N:$
\begin{enumerate}
\item[(i)] the intersection of the curve $\frak{C}_{r,N}$ with the  Riemann surface
 $\widehat{L}_\alpha$ inside the ball $\B_{r^{1/N}|\log r|^{-3/N}}$ can be enumerated as $\xi_{r,N,\alpha,k} $
such that   $\xi_{r,N,\alpha,k}$ and $\xi_{r,\alpha,k}$ are compatible, where 
 $  k\in\N$ such that $ \xi_{r,\alpha,k}\in \B_{r^{1/N}|\log r|^{-3/N}} ;$
 \item[(ii)]  for every $k\in \N$  with $ \xi_{r,\alpha,k}\in \B_{r^{1/N}|\log r|^{-3/N} },$ we  have that
\begin{equation}\label{e:nearby_bis}
\dist_C(\xi_{r,N,\alpha,k},\xi_{r,\alpha,k})\leq c_N   |\log r|^{-3}.
\end{equation}
\end{enumerate}
 \end{lemma}
 \begin{proof}
 We need to prove that  for  every
point $\xi_1\in(\Cf_r\cap\widehat{L}_\alpha) \cap \B_{r^{1/N} |\log r|^{-3/N}}$  (resp. $\xi_1\in
(\Cf_{r,N}\cap\widehat{L}_\alpha)  \cap \B_{r^{1/N} |\log r|^{-3/N}}  $),  there is  exactly one point $\xi_2\in
(\Cf_{r,N}\cap\widehat{L}_\alpha) \cap \B_{r^{1/N} |\log r|^{-3/N}} $ 
(resp. $\xi_2\in(\Cf_r\cap\widehat{L}_\alpha) \cap \B_{r^{1/N} |\log r|^{-3/N}}$) such that $\xi_1$ and $\xi_2$ are  compatible and that
\begin{equation}\label{e:xi_vs_xi'_bis_bis}
  \dist_C(\xi_1 ,\xi_2)\lesssim |\log r|^{-3}.
\end{equation}
We will only  show that for  every
point $\xi_1\in(\Cf_r\cap\widehat{L}_\alpha) \cap \B_{r^{1/N} |\log r|^{-3/N}},$ there is  exactly one point $\xi_2\in
(\Cf_{r,N}\cap\widehat{L}_\alpha) \cap\B_{r^{1/N} |\log r|^{-3/N}}$ satisfying \eqref{e:xi_vs_xi'_bis_bis}
since the other  assertion can be proved similarly.

 Write $\xi_1=(z_1,w_1).$ So $z_1=r.$ We need to  find $\xi_2=(z_2,w_2)\in\Cf_{r,N}$  which is  compatible with $\xi_1.$
 Let $s_0:=r_{N}.$ 
Consider two cases.
 
 
 \noindent {\bf Case 1:}  $|w_1|\geq 2r.$
 
 In this case  we  fix  a  number $s\in (0,s_0)$ as follows
$$s:=c'  |\log r|^{-3} \qquad \mbox{for $ c'>0$ a large constant independent of $r$}.$$
Let $f,$ $h$ and $R_r$ be the holomorphic functions on $\D_{s_0}$ introduced in the proof of Lemma \ref{L:near_by_z=w^N} and Lemma \ref{L:near_C_0,N}   (see \eqref{e:f_theta_bis} and  \eqref{e:Z_r}).
 On  the  other hand,  we   deduce  from  the  membership $(z_2,w_2)\in \Cf_{r,N}$
and  \eqref{e:correspondence}
    and \eqref{e:Z_r} that  $z_2=R_r(w_2)+f(w_2).$ 
 Write $z_2=e^tz_1,$ $w_2=e^{\lambda t}w_1$ for some $0<|t|\ll 1$ since $\xi_2 $   is  compatible with $\xi_1.$  Consequently, we infer that $t$ is a  solution of the  following equation
  $$R_r(  e^{\lambda t}w_1   )+f( e^{\lambda t}w_1)=e^tz_1.   $$
 Using \eqref{e:f_theta_bis} and  the  equality   $z_1=r,$ we  deduce  that $t$ is  a root of  the  following holomorphic function
 on  the disc $\D_{s}$  
 $$
 F(t):=R_r(  e^{\lambda t}w_1   )-\tilde{a}_Ne^{\lambda Nt}w_1^N+h(e^{\lambda t}w_1) -e^t r,\qquad  t\in  \D_{s}.
 $$
 Consider another  holomorphic  function on $\D_s:$
 $$
 H(t):= r-re^t,\qquad t\in\D_s.
 $$
 Observe  that $H$ admits a unique  root $t=0$ on  $\D_s.$
 Moreover,  $|H(t)|\approx  sr$ for $t\in\partial\D_s.$  
 
 Since  $(z_1,w_1)\in  \B_{r^{1/N} |\log r|^{-3/N}},$
it follows that  $|w_1|\leq   r^{1/N} |\log r|^{-3/N}.$
This together  with  the second inequality in \eqref{e:control_a_j_and_b_j} yield that
$$|a_Nw_1^N| \leq 2^N |w_1|^N\leq  2^N r|\log r|^{-3}.$$
This, combined  with \eqref{e:f_theta} and \eqref{e:f_theta_bis}, implies that when  the constant $c'$ is  large  enough,    
\begin{equation}\label{e:r(logr)-3_bis}
|\tilde{a}_Nw_1^N|<  sr/2.
\end{equation}
Then  we deduce  from \eqref{e:r(logr)-3_bis}
and \eqref{e:R_r}, \eqref{theta_N}, \eqref{e:f_theta} that for $r>0$ small enough,
\begin{equation}\label{e:R_r_bis_bis}
R_r(w)=r+O(r^2), \qquad w\in\D_{s_0}.
\end{equation}
 On the other hand, we infer from   \eqref{e:f_theta_bis}   that 
 $|h(e^{\lambda t}w_1)|\ll |\tilde{a}_Ne^{\lambda Nt}w_1^N|.$
 Putting  this together with  \eqref{e:r(logr)-3_bis} and \eqref{e:R_r_bis_bis}, we  deduce for $t\in\partial \D_s$ that
 \begin{eqnarray*}
 |F(t)-H(t)|&\leq& |R_r(  e^{\lambda t}w_1   )-r|+ |h(e^{\lambda t}w_1)|\\
 &\leq &O(r^2)+|\tilde{a}_Nw_1^N|<sr \approx |H(t)|.
 \end{eqnarray*}
 Consequently, by Rouch\'e's theorem applied to $F$ and $H,$  there is a unique $t\in \D_s$ such that $F(t)=0,$ i.e., there is  a unique  
$\xi_2=(e^tz_1,e^{\lambda t}w_1)\in\Cf_{r,N}$  with  $|t|\leq s.$  Since $ s \lesssim  |\log r|^{-3},$
 \eqref{e:xi_vs_xi'_bis_bis} follows from Lemma \ref{L:property_compatible}.
 
  \noindent {\bf Case 2:}  $|w_1|\leq 2r.$
  
  Here the  difficulty lies in the fact that  we cannot apply  Proposition   \ref{P:z_N_expression} (i)
 and  that   the functions $f,g,h$ etc  are therefore not available  any more. 
In this  case  we choose  $s:=c'r^2$  for  $c'>0$ a large  constant.
Using the  assumption $|w_1|\leq 2r$ and   the expansion  \eqref{eq_z_N_full_expansion}, we get
  that
\begin{equation*}
|z_\infty(r,w_1)-z_N(r,w_1)|\leq \sum_{j=N}^\infty |a_j||\vartheta^j(r,w_1)|\big( |w_1|+|\sum_{k=2}^\infty b_k r^k|   \big)^j=O(r^2).
\end{equation*}
Moreover, using \eqref{e:z_expansion} and \eqref{e:Jac} we infer that
\begin{equation*}
|r-z_\infty(r,w_1)|=O(r^2).
\end{equation*}
On the  other hand,   using  \eqref{eq_z_N_full_expansion} and  \eqref{e:z_expansion} and \eqref{e:Jac}, we  
obtain that
\begin{eqnarray*}
  z_N(e^t r,e^{\lambda t}w_1)-z_N(r,w_1)&=&  z_\infty(e^t r,e^{\lambda t}w_1)-z_\infty(r,w_1) +O(r^2)+O(t^2)\\
&=& e^tr-r + O(r^2)+O(t^2)= O(r^2)+O(t^2).
\end{eqnarray*}
Putting together these three estimates, we have  
for $t\in \D_s$ that
 \begin{eqnarray*}
 |z_N(e^t r,e^{\lambda t}w_1)-r|&\leq& \big | z_N(e^t r,e^{\lambda t}w_1)-z_N(r,w_1)\big| +        |z_N(r,w_1)- z_\infty(r,w_1)|\\
&+& |r-z_\infty(r,w_1)|\\
&=& t+O(t^2)+O(r^2) .
 \end{eqnarray*}
Consequently, when $c'$ is  large enough, we know by Rouch\'e's theorem applied to  the identity function
$\D_s\ni t\mapsto t$
 and   the
holomorphic function  $ t\mapsto z_N(e^t r,e^{\lambda t}w_1)-r$ that the latter    admits a unique root on $\D_{s}.$
 Hence,   there is  a unique  
$\xi_2=(e^tz_1,e^{\lambda t}w_1)\in\Cf_{r,N}$  with  $|t|\leq s.$  Since $ s \lesssim  |\log r|^{-3},$
 \eqref{e:xi_vs_xi'_bis_bis} follows from Lemma \ref{L:property_compatible}.
 \end{proof}

\section{Mass of   $T\wedge[\mathfrak{C}_r]$ on balls}
\label{S:balls}

 
  The main purpose of this section is to prove  Proposition \ref{P:comparison}   and one half  of Proposition \ref{P:key_interpretations}.
  Recall that $\T:= \{\alpha\in\C:\ e^{-2\pi b}\leq |\alpha|\leq 1\}.$
In parallel with the integral operator $K_s$ given in  \eqref{eq_K_s}, we also consider, for each $s>0,$ the  domain  
$   D_s:=\{ t\in\R:\  t\geq  s \},$
 and the  function $\widetilde{K}_s:\   \R\to\R^+$ given  by
\begin{equation}\label{e:widetilde_K_s}
\widetilde{K}_s(y):=                   \int_{ D_s }{V\over V^2+( y-U)^2}  dt,\qquad y\in \R.
\end{equation}
Here $U,$ $V$  are functions of the variable $t$ and the parameter $s$ which satisfy the following  system
of equations (see \eqref{eq_u,v_vs_U,V},   \eqref{e:t} and \eqref{eq_u,v_vs_z,w}):
  $$U+iV=(u+is)^\gamma\qquad\text{and}\qquad  t=bu+as.$$  

The following result is the main technical point  in the proof of Proposition \ref{P:comparison}.
\begin{lemma}\label{L:comparison} There is a constant $c>1$ such that 
for all $y\in \R$  and $s>0,$  
\begin{equation*}
        c^{-1}\leq  \widetilde{K}_s(y)/  K_s(y)\leq c . 
\end{equation*}
\end{lemma}
Taking  Lemma \ref{L:comparison} for granted, we arrive at the

\noindent{\bf  End of the proof of Proposition \ref{P:comparison}.} In what  follows we use the notation
introduced in  Lemma \ref{L:intersection_C_r}.
By \eqref{e:para-inter_C_r},  we have that $u_{\alpha,k}=  2k\pi- (\log {|\alpha|})/b $ and  $v_r=-\log r.$ This, coupled
with  
\eqref{eq_parametrization_DS}, \eqref{e:t} and \eqref{eq_u,v_vs_z,w}, implies that  $\xi_{r,\alpha,k}\in \B_{r^\delta}$
if and only if   $bu_{\alpha,k}+av_r\geq -\delta\log r,$ which is, in turn, equivalent to 
$$k\geq{1\over 2\pi b}\Big( (\delta-a)(-\log r) + \log {|\alpha|}\Big).$$ 
Let $\Z_{\delta,r, \alpha}$  be the set of all  integers $k$ satisfying  the last inequality.
Observe that
$$
 \| T\wedge [\Cf_r]\|_{    \B_{r^\delta}}=\int_{\T}\Big(\sum_{k\in \Z_{\delta, r, \alpha}}h_\alpha(\xi_{r,\alpha,k})\Big) d\nu(\alpha).
$$  
 For $\alpha\in\T$ let $D^\alpha:= D_{-\delta \log r},$
  $t_{r,\alpha,k}:=bu_{\alpha,k}+av_r,$ $k\in \Z.$ Then  $k\in \Z_{\delta, r,\alpha}$ if and only if
$t_{r,\alpha,k}\in D^\alpha.$  Moreover, consider the function $\chi^\alpha:\ D^\alpha\to\C$ given by
$$
\chi^\alpha (t):=u+iv_r,\qquad\text{where}\quad v_r=-\log r\quad\text{and}\quad t=bu+av_r.
$$ 
Let $m=1$ and choose $\rho>1$  large  enough.
Using  Harnack's inequality, we see that the assumption of Part 1) of  Proposition \ref{P:interpretation} 
is  fulfilled. Hence, by Definition \ref{D:interpretation} we obtain an interpretation $(K^\alpha)_{\alpha\in\T}$
of the geometric intersection $T\wedge [\Cf_r]$ on $    \B_{r^\delta}$ with mesh $m=1.$
Moreover, we infer from the above  discussion and  formula \eqref{e:widetilde_K_s} that
$K^\alpha(y)=\widetilde K_{-\delta \log r}(y).$
Consequently, by Part 1) of Proposition \ref{P:interpretation}   we get that  
\begin{equation*} 
 \| T\wedge [\Cf_r]\|_{    \B_{r^\delta}}\approx 
\int_{\alpha\in\T}\Big( \int_{-\infty}^\infty  \widetilde{K}_{ -\delta \log r}(y)\tilde{H}_\alpha(y)dy
\Big) d\nu(\alpha). 
\end{equation*}
On the  other hand, by Lemma \ref{L:comparison} we know that
  $ \widetilde{K}_{ -\delta \log r}(y)\approx  K_{ -\delta \log r}(y).$
By  the definition   of $K_s$ in \eqref{eq_K_s}, we may find  a constant $c_\delta>1$ such that 
 $c_\delta^{-1}<K_{ - \delta \log r}(y)/  K_{ -\delta \log r}(y)<c_\delta.$
 So   $\widetilde{K}_{ -\delta \log r}(y)\approx K_{ - \log r}(y).$
 This, combined with  the last   estimate  for  $\| T\wedge [\Cf_r]\|_{    \B_{r^\delta}},$ implies that
 $$
  \| T\wedge [\Cf_r]\|_{    \B_{r^\delta}}\approx 
\int_{ \alpha\in\T}\Big( \int_{-\infty}^\infty  K_{ - \log r}(y)\tilde{H}_\alpha(y)dy
\Big) d\nu(\alpha).
 $$
 Comparing  this  with   Lemma \ref{lem_estimate_G}, the proof is  completed. 
\hfill $\square$

\noindent{\bf  End of the proof of Lemma \ref{L:comparison}.}  Let  $c_2,c_3$ be the constants with $c_3>c_2>1$ given by Lemma \ref{lem_Poisson_kernel}.
 We consider three cases.

\noindent {\bf Case 1:} $  s\geq c_2   (1+ | y|)^{1/\gamma} .$ 

By  Part 2) of Lemma \ref{lem_Poisson_kernel} and by formula \eqref{e:widetilde_K_s},
we have that
$$
\widetilde{K}_s(y)\approx    \int_{t=s}^\infty  {sdt\over t^{\gamma+1}}    
\approx  s^{1-\gamma}.$$
This, compared with formula   \eqref{eq_K_s}, completes 
 the proof of   Case 1.

\noindent {\bf Case 2:} $  c_3^{-1} \leq  {s\over   (1+ | y|)^{1/\gamma}}\leq c_2.$ 

Write $D_s= D^1_s\cup D^2_s,$ where 
\begin{eqnarray*}
  D^1_s&:=&\left\lbrace t\in D_s:\ t\leq  c_2 (1+ | y|)^{1/\gamma}\right\rbrace,\\
  D^2_s&:=&\left\lbrace    t\in D_s :\  t\geq   c_2 (1+ | y|)^{1/\gamma}  \right\rbrace.
\end{eqnarray*}
Consequently, formula \eqref{e:widetilde_K_s}  gives that
\begin{equation}\label{eq_I_II}
\widetilde{K}_s(y)=    \Big(\int_{D^1_s}  +     \int_{D^2_s }\Big){V\over V^2+( y-U)^2} dt=: I+II.
\end{equation}
To estimate $(I),$  we  apply  Part 4) of Lemma \ref{lem_Poisson_kernel} and obtain that
$$
I \approx  \int_{D^1_s}{ dt\over (1+|y|)}= \int_{c_3^{-1}(1+ | y|)^{1/\gamma}}^{c_2(1+ | y|)^{1/\gamma}}  {dt\over (1+|y|)}  \approx (1+|y|)^{1/\gamma-1}.
$$
To estimate $(II),$  we  apply  Part 2) of Lemma \ref{lem_Poisson_kernel} and obtain that
$$
II\approx  \int_{D^2_s}{ \min\{ t,v\}dt\over  (  \max\{ t,v\} )^{\gamma+1} } 
\leq
 \int_{c_3^{-1}(1+ | y|)^{1/\gamma}}^\infty  {s dt\over  t^{\gamma+1}}   
 \approx (1+|y|)^{1/\gamma-1}.$$
 Inserting the above estimates for $(I)$ and $(II)$ into  (\ref{eq_I_II}), we obtain  that
  $\widetilde{K}_s(y)\approx (1+|y|)^{1/\gamma-1}$ in the  second case.  
 Comparing this  with  formula   \eqref{eq_K_s},
  the proof of   Case 2 is  complete.

\noindent {\bf Case 3:} $   s  \leq  c_3^{-1}     (1+ | y|)^{1/\gamma}.$ 

 Write  
 $D_s= D^{1}_s\cup D^{2}_s\cup D^{3}_s ,$ where
\begin{eqnarray*}
D^{1}_s&:=&\left\lbrace t:\ s\leq t\leq  c^{-1}_2 (1+ | y|)^{1/\gamma}\right\rbrace,\\
D^{2}_s&:=&\left\lbrace t:\  t\geq  c_2 (1+ | y|)^{1/\gamma}  \right\rbrace,\\
D^{3}_s&:=&\left\lbrace t:\   c^{-1}_2 (1+ | y|)^{1/\gamma} \leq   t \leq  c_2 (1+ | y|)^{1/\gamma}  \right\rbrace.
\end{eqnarray*}
 Consequently, we get,  similarly as in (\ref{eq_I_II}), that 
$$ \widetilde{K}_s(y)=   \Big( \int_{D^{1}_s}+   \int_{D^{2}_s}+\int_{D^{3}_s} \Big) {V\over V^2+( y-U)^2} dt=: I+II+III.
$$
 To estimate $(I)$   we apply  Part 1) and Part 3) of Lemma \ref{lem_Poisson_kernel}. Consequently,
we  obtain that
$$
I \approx \int_{D^{1}_s}   {t^{\gamma-1}s    dt\over (1+|y|)^2}
\approx 
s\Big( \int_{t=s}^{c^{-1}_2(1+ | y|)^{1/\gamma}}  {t^{\gamma-1}dt\over (1+|y|)^2}    \Big) \lesssim  s(1+|y|)^{-1 }.$$
To estimate $(II),$   we  apply  Part 2) of Lemma \ref{lem_Poisson_kernel} and obtain that
$$
II\approx \int_{D^{2}_s}  { sdt\over  t^{\gamma+1} } \approx 
s\Big( \int_{c_2(1+ | y|)^{1/\gamma}}^\infty  {dt\over t^{\gamma+1}}    \Big)  \approx  s(1+|y|)^{ -1}.$$
To  estimate $(III),$   we  apply  Part 5) of Lemma \ref{lem_Poisson_kernel} and obtain that
$$
III\approx    \int_{c^{-1}_2(1+ | y|)^{1/\gamma}}^{c_2(1+ | y|)^{1/\gamma}}
  { (1+|y|)^{1/\gamma-1} sdt \over s^2+  (t-\rho(y,s) )^2 }, 
$$
where $\rho(y,s)$ satisfies  $c^{-1}_2(1+ | y|)^{1/\gamma} \leq \rho(y,s)\leq c_2(1+ | y|)^{1/\gamma}.$ 
 Write $III:=III_1+III_2,$ where
$$ III_1=\int_{ |t- \rho(y,s)|\leq  s}
        { (1+|y|)^{1/\gamma-1}s dt \over s^2+  (t-\rho(y,s) )^2 }\approx
\int_{ |t- \rho(y,s)|\leq  s}
        { (1+|y|)^{1/\gamma-1} dt \over s  }\approx
(1+|y|)^{1/\gamma-1},
$$
 and
 $$
III_2\approx  \int { (1+|y|)^{1/\gamma-1} sdt \over s^2+  (t-\rho(y,s) )^2 }\leq 
 \int { (1+|y|)^{1/\gamma-1} sdt \over   (t-\rho(y,s) )^2 }\lesssim
c(1+|y|)^{1/\gamma-1},
$$
 the integrals in the last line  being taken over the region
$$\left\lbrace t\in\R:\ c^{-1}_2(1+ | y|)^{1/\gamma}\leq t\leq c_2(1+ | y|)^{1/\gamma} \ \text{and}\  |t- \rho(y,s)|\geq  s \right\rbrace.$$
   Thus, $III\approx (1+|y|)^{1/\gamma -1}.$

Combining the obtained  estimates for  $(I),$  $(II)$ and   $(III),$  and using the  assumption
$   s  \leq  c_3^{-1}     (1+ | y|)^{1/\gamma},$ we  infer that 
$$\widetilde{K}_s(y)=I+II+III\approx  s(1+|y|)^{ -1}+ (1+|y|)^{1/\gamma-1}\approx   (1+|y|)^{1/\gamma-1}.$$
 This, compared  with  formula   \eqref{eq_K_s},
   allows us to 
 conclude
 the proof of   the last case.  
  \hfill $\square$
  
  As an application of Lemma \ref{L:comparison},  we  are able to  establish one half of the proof of Proposition \ref{P:key_interpretations}.
  
\noindent  
   {\bf Proof of Proposition \ref{P:key_interpretations} for
the  geometric intersection $T\wedge [\Cf_r].$}
In parallel with the integral operator $\widetilde{K}_s$ given in  \eqref{e:widetilde_K_s}, we   consider, for each $s>0,$ the  domain  
$   D_{s,N}:=\{ t\in\R:\  t\geq  s/N+3(\log s)/N \},$
 and the  function $K_{s,N}:\   \R\to\R^+$ given  by
\begin{equation}
K_{s,N}(y):=                   \int_{ D_{s,N} }{V\over V^2+( y-U)^2}  dt,\qquad y\in \R,
\end{equation}
Here $U,$ $V$  are functions of the variable $t$ and the parameter $s$ which satisfy the following  system
of equations (see \eqref{eq_u,v_vs_U,V},   \eqref{e:t} and \eqref{eq_u,v_vs_z,w}):
  \begin{equation}\label{e:coherence_K}
U+iV=(u+is)^\gamma\qquad\text{and}\qquad  t=bu+as.
\end{equation}
We  argue as in the proof of  Lemma \ref{L:comparison} for $\delta:=1/N$ making the obviously necessary changes. The factor $\log s$ in the definition of $D_{s,N}$  can be overlooked without changing the final result.
 For $\alpha\in\T,$  set $D^\alpha:= D_{-\log r,N}$ and   $K^\alpha:=K_{-\log r,N}$ 
 and $\chi^\alpha(t):=u(t)-i\log r,   $ $t\in D^\alpha,$
 where $u$ is  a  function of $t$ satisfying equation \eqref{e:coherence_K} with $s:=-\log r.$
Consequently, using Lemma \ref{L:intersection_C_r} we can show that $(K^\alpha)_{\alpha\in\T}$
is   an  interpretation of the  geometric intersection $T\wedge [\Cf_r]$ on $\B_{r^{1/N}|\log r|^{-3/N}}$
with mesh $1.$

It remains us to show that the above  interpretation is  coherent. We can check this  using 
Lemma \ref{lem_Poisson_kernel}  and the fact that the mesh of the interpretation is  $1.$
\hfill $\square$  
 
\section{Mass of  $T\wedge [\frak{C}_{r,N}]$ outside the corona   $\A_{r,N}$}
\label{S:outside_corona}
   
  The objective of this  section is  to establish Proposition \ref{P:mass_1}. We start with the following simple lemma.
   
 \begin{lemma}\label{L:disc_r}
 Let $0<s<r<\infty.$  
 Let $h$ be  a  positive harmonic function   on the disc $\D_r.$
Then there is a constant $c>0$  depending only on the quotient $s/r$ such that for $x_1,x_2\in \overline\D_s,$ 
\begin{equation*}
|h(x_1)-h(x_2)|\leq  cr^{-3}|x_1-x_2|\int_{\D_r}  h(z)d\Leb (z),
\end{equation*}
where $d\Leb$ is the Lebesgue measure  in $\C.$ 
 \end{lemma}
\begin{proof} Using a continuity argument  we may  assume that $h$ is  continuous  on $\overline\D_r.$ By Poisson integral  formula, we have
\begin{equation*}
h(x)= {1\over 2\pi r}\int_{\partial  \D_r}    {r^2-|x|^2\over  |x-y|^2}h(y) d\sigma(y)
\quad\text{for}\quad  x\in \D_r,\ y\in \partial \D_r,
\end{equation*}
where $d\sigma(y)$ is  the Lebesgue measure  on $\partial  \D_r.$  
We  infer   from this  formula  that  
\begin{eqnarray*}
|h(x_1)-h(x_2)|\leq  c  {|x_1-x_2|\over r^2}\int_{\partial \D_r } h(y) d\sigma (y)&=& {2\pi c |x_1-x_2| h(0)\over  r}\\
&=& {2c|x_1-x_2| \over r^3}\int_{\D_r}  h(z)d\Leb (z),
\end{eqnarray*}
where the  equalities  hold by the  mean-property. Hence, the lemma follows.
 \end{proof}
 
  \smallskip
  
  Now we are in the position to prove the first part of  Proposition  \ref{P:mass_1}.
  
\noindent{\bf End of  the proof of estimate \eqref{e:mass_estimates_1} in Proposition  \ref{P:mass_1}.}
Let $N\in \N$ and  let $r_N$ be  given by Proposition   \ref{P:cohomology}.
 Consider the compact set
$$
Y:= \Cf_{r,N}\cap  (X\setminus \D^2).
$$ By  Proposition 
\ref{P:cohomology} (i) and (ii), we have that 
$Y\cap E=\varnothing.$
We  use the finite cover $\Uc$  of $X$  by singular  flow  boxes  $(\U_e)_{e\in E}$ and  regular 
flow boxes $(\U_p)_{p\in P}$  introduced in Subsection \ref{SS:local_model}.
We may assume  without loss of generality that 
\begin{equation}\label{e:Y}
 Y\subset \bigcup_{p\in P} \U_p .
 \end{equation}
  Putting \eqref{e:Cf'_r,N} and \eqref{e:Cf_r,N} and \eqref{e:Y} together,
  we use an argument of local  complex  geometry to  express    the intersection 
points of the algebraic curves  $\Cf_{r,N}$ and    $\Cf_{0,N}$
with a plaque of $\U_p$ as the roots of some holomorphic functions defined on some open subset of $\U_p.$
Consequently, by shrinking $r_N$ if necessary, 
 we may  find a constant $0<\delta=\delta_N<1$ 
 such that
for every $x\in \Cf_{0,N}\cap(X\setminus \D^2)$ and every $0< r<r_N,$  there is exactly  one point $\tau(x)\in\Cf_{r,N}$  
such that $x$ and $\tau(x)$ are on the same plaque $\V_x$  of at least  one of the regular flow boxes $(\U_p)_{p\in P}$
and that  
\begin{equation}\label{e:x_vs_tau_x}
\dist(x,\tau(x))\leq c r^\delta \quad \text{for some constant}\quad c>1\quad\text{independent of}\quad x.
\end{equation}
In fact, $\delta$ is the reciprocal of the multiplicity of the intersection of $\Cf_{0,N}$
and $\V_x$ at $x.$

By  shrinking  the union $\cup_{x\in \Cf_{0,N}} \V_x, $ we may find an open neighborhood
  $\V$  of  $(X\setminus \D^2)\cap \Cf,$ where $\Cf$ is  the closure  of $\bigcup_{0\leq r<r_N}\Cf_{r,N}.$
Therefore, by  Proposition \ref{prop_current_local}  we have the following integral  representation of $T$ in $\V:$
\begin{equation*}
T=\int h_x[\V_x] d\nu(x),
\end{equation*}
where, for each  $x\in \Cf_{0,N} \cap (X\setminus \D^2),$     $h_x$ denotes the positive harmonic function associated to the current $T$ on the plaque  $\V_x, $
and $\nu$ is a  positive Radon  measure on   $\Cf_{r,N} \cap (X\setminus \D^2).$ 

On the other hand,  by Part 1) of Lemma  \ref{lem_poincare}, we have that
\begin{equation*}
c^{-1}\leq \eta(x)\leq c,\quad x\in \bigcup_{p\in P} \U_p\quad\text{for some constant}\quad c>1. 
\end{equation*}
This, combined with \eqref{e:x_vs_tau_x} and  \eqref{e:Y}, implies    that
 $\dist_P(x,\tau(x))\leq  cr^\delta$ and that the diameter of the plaque $\V_x$ with respect to the Poincar\'e metric $g_P$ is  $\approx 1.$
Applying  Lemma \ref{L:disc_r}
  to the disc $\D_r$ for $r\approx 1,$
  we  get   a constant $c>1$    such that   
\begin{equation*}
|h_x(x)-h_x(\tau(x))|\lesssim  \dist_P(x,\tau(x)) \int_{\V_x}  h_x(y) g_P(y)\lesssim  r^\delta\int_{\V_x}  h_x(y)g_P(y).
\end{equation*}
Integrating both sides
with respect  to $ d\nu(x),$
we obtain  that
\begin{equation*}
\int_{x\in \Cf_{0,N} \cap (X\setminus \D^2)} |h_x(x)-h_x(\tau(x))|  d\nu(x) \lesssim    
r^\delta \| T\wedge g_P\|_{\V}\lesssim r^\delta ,
\end{equation*}
where the last inequality holds because  $\| T\wedge g_P\|_X$ is finite  by
 \cite[Proposition 4.2]{DinhNguyenSibony1}  (this corresponds to  \eqref{e:known_estimate} in the case $\delta=1$).
Since  we  know  by using \eqref{e:Y} and  Proposition  \ref{P:local_inter} that the  left hand side is bigger than
 $$\Big | \|  T\wedge [\Cf_{r,N}]\|_{X\setminus  \D^2} -  \|  T\wedge [\Cf_{0,N}]\|_{X\setminus  \D^2}\Big|,$$   the  desired estimate follows.
 \hfill
 $\square$

\begin{remark}\label{R:sD_red} Estimate \eqref{e:mass_estimates_1} in Proposition  \ref{P:mass_1}
 still holds if we  replace $X$ and  $\D^2$ by the bidiscs $\D^2$ and $(s\D)^2$ respectively for any $0<s<1,$ i.e.,
 there are  constants  $0<\delta=\delta_N<1$ and  $c=c_{s,N}>1$ such  that 
 $$\Big | \|  T\wedge [\Cf_{r,N}]\|_{\D^2\setminus  (s\D)^2} -  \|  T\wedge [\Cf_{0,N}]\|_{\D^2\setminus  (s\D)^2}\Big|\leq  c r^\delta\quad\text{for}\quad 0<r<\min\{s,r_N\}.$$ 
 \end{remark}

The   last two parts of Proposition   \ref{P:mass_1} concern    balls  in a singular  flow  box
around a  singular point $\bar x\in E$. In what follows,  we may  assume without loss of generality that
$\Fc$ is the foliation  on $\D^2$   associated to the vector field $F$  introduced in Section
\ref{S:parametrization} and that $\bar x=0\in\D^2.$  Moreover,
 let  $c_0$ be the constant $c>1$  given  by
 Lemma \ref{L:property_compatible} (ii).  
\begin{definition}\rm \label{D:cell}
Given  a point $x_0=(z_0,w_0)\in (\D\setminus\{0\})^2$ and
a number $0<\rho<1/2,$ 
  a {\it  cell} with center $x_0$ and radius $\rho$  is  the  set $\Cell (x_0,\rho)$
given by 
\begin{equation*}
\left\lbrace x=(z,w)\in(\D\setminus \{0\})^2:\  \max\{ |1-z/z_0|,|1-z_0/z|,|1-w/w_0|,|1-w_0/w|  \}<\rho \right\rbrace.
\end{equation*}
Note that for $ x\in\Cell(x_0,\rho),$ 
$$
(1+\rho)^{-1}\|x_0\|\leq \|x\|\leq (1+\rho)\|x_0\|
.
$$
We fix  a   number $0<\rho_0<1/2$ so   that the following  two conditions (i)--(ii) are satisfied:

(i) For  every $0<\rho\leq \rho_0$ and  every point $x_0 =(z_0,w_0)\in (\D\setminus\{0\})^2,$ 
  $\Cell(x_0,\rho)$ is a  flow box with the transversals
 $$\T_{x_0}:= \{ (z_0,w)\in \Cell(x_0,\rho):\ w\in \C\}\quad\text{and} \quad \T'_{x_0}:= \{ (z,w_0)\in \Cell(x_0,\rho):\ z\in\C\}.$$
  We often identify $\T_{x_0}$ and  $ \T'_{x_0}$ with its projection on second first and  its first 
 components respectively, that is, with the set $\{w\in\D: |1-w/w_0|<\rho\ \text{and}\ |1-w_0/w|<\rho\}$
 and $\{z\in\D: |1-z/z_0|<\rho\ \text{and}\ |1-z_0/z|<\rho\}$ respectively.
 The plaque  of $\V:=\Cell(x_0,\rho)$ passing through $\alpha=(z_0,w)\in \T_{x_0}$ is  denoted by $\V_\alpha.$
 
 (ii) All points in $\V_\alpha$ are compatible  with each other in the sense of Definition \ref{D:points_compatible}, in particular with $\alpha,$
for all $\alpha  \in \T_{x_0}.$

Finally, set $\rho_1:= 1/4 c_0^{-2}\rho_0.$ 
\end{definition}

The following result illustrates the usefulness of the constants $\rho_0$ and $\rho_1$ given in Definition   \ref{D:cell}.

\begin{proposition}\label{P:two_curves_in_cell}
Let $\U$  (resp. $\V$)  be the  cell with center $x_0\in  (\D\setminus\{0\})^2$ and radius $\rho_0$ (resp.
 radius  $\rho_1$). Let $\T_{x_0}$ be  a transversal of  $\V$  as in Definition   \ref{D:cell}.
 Let  $\Df_1,\Df_2$ be   two analytic curves in $\U$
 such that for  every $\alpha\in \T_{x_0}$ and $j=1,2,$  $\Df_j$ intersects the plaque $\V_\alpha$ at 
a unique point $\alpha_j.$
Then there is a constant $c>0$ independent of $x_0$  such that for  every
 harmonic current $T$ tangent to the foliation, we have  that
 $$
 \| [\Df_1]\wedge T-[\Df_2]\wedge T\|_\V \leq  c {(\log^*{|x_0|})^2\| T\wedge g_P\|_{\U} \over  \|x_0\| }   \sup_{\alpha\in \T_{x_0}}   \|\alpha_1- \alpha_2\|.
 $$
\end{proposition}
\begin{proof}
 By  Proposition \ref{prop_current_local} we have the following integral  representation of $T$ in $\V:$
\begin{equation*}
T=\int h_\alpha[\V_\alpha] d\nu(\alpha),
\end{equation*}
where, for each  $\alpha\in \T_{x_0},$     $h_\alpha$ denotes the positive harmonic function associated to the current $T$ on the plaque  $\V_\alpha. $  

Let $\alpha\in \T_{x_0}.$ Since we know  by Definition  \ref{D:cell} that $\|\alpha\|\approx \|x_0\|$
and that  the points in $\V_\alpha$ are compatible with each other,  it follows from Lemma \ref{L:property_compatible} (ii) and  the choice  of $\rho_0,$  $\rho_1$ that there are two constants $0<c'<c''$  (independent of    $\alpha$ and $x_0$) and  constants  $c_\alpha, c'_\alpha\in [c',c'']$    such that
\begin{equation*}
 \phi_\alpha(\D_{c_\alpha (\log^*\|x_0\|)^{-1}})\subset\V_{\alpha}\subset  \phi_{\alpha}(\D_{c'_\alpha (\log^*\|x_0\|)^{-1}})\subset   \phi_{\alpha}(\D_{2c'_\alpha (\log^*\|x_0\|)^{-1}})\subset \U_\alpha.  
\end{equation*}
Therefore, applying  Lemma \ref{L:disc_r}
  to the disc $\D_r$ for $r:=2c'_\alpha(\log^*\|x_0\|)^{-1},$
  we  get   a constant $c>1$    such that  for $x_1,x_2\in\V_\alpha,$  
 we have that
\begin{equation*}
|h_\alpha(x_1)-h_\alpha(x_2)|\leq  c\|x_0\|^{-1} (\log^*\|x_0\|)^{2}|x_1-x_2|\int_{\U_\alpha}  h_\alpha g_P,
\end{equation*}
where $g_P$ is, as  usual, the leafwise Poincar\'e metric restricted to  $\U_\alpha\subset L_\alpha.$ 
Applying  the  last inequality  to  $x_1=\alpha_1$ and $x_2=\alpha_2$ and integrating both sides
with respect  to $ d\nu(\alpha)$ and  using the  above integral representation of $T,$
we obtain  that
\begin{equation*}
\int_{\alpha\in  \T_{x_0}} |h_\alpha(\alpha_1)-h_\alpha(\alpha_2)|d\nu(\alpha)\leq    
c{ (\log^*{|x_0|})^2\| T\wedge g_P\|_{\U} \over  \|x_0\|}   \sup_{\alpha\in \T_{x_0}}   \|\alpha_1- \alpha_2\|.
 \end{equation*}
 Applying  Proposition     \ref{P:local_inter},  we  see   that the  left hand side is bigger than
 $ \| [\Df_1]\wedge T-[\Df_2]\wedge T\|_\V,$ and hence  the proposition follows.
\end{proof}


\noindent{\bf End of  the proof of estimate \eqref{e:mass_estimates_2} in Proposition  \ref{P:mass_1}.}
  Fix $N\in\N$ large  enough  and let $0<r_N<1$  be the constant given  by Lemma \ref{L:near_C_0,N}.
  Set $s:=r_N.$ By Remark \ref{R:sD_red} we can reduce  estimate \eqref{e:mass_estimates_2} to the following one:
\begin{equation}\label{e:mass_estimates_2_red}
\Big| \| T\wedge [\Cf_{r,N}]   \|_{  (s\D)^2\setminus  \B_{r^{1/N}|\log r|^{3/N}}}-\| T\wedge [ \Cf_{0,N}] \|_{  (s\D)^2\setminus  \B_{r^{1/N}|\log r|^{3/N}}}\Big |\leq c  |\log r|^{-1}, 
\end{equation}
where  $c$ is a  constant  which depends only on $N.$
Fix $\rho_2$ with $0<\rho_2\ll\rho_0$ and $2\pi\rho_2^{-1}\in\N.$
Consider  the  countable set 
\begin{multline*}
\X:=\left\lbrace  x=(z,w)\in(s\D)^2:\ \  |z|,|w|\in \{  s(1-\rho_2)^p:\ p\in\N\}\right.\\
\left.\qquad \text{and}\qquad
\arg z,\arg w\in \{0,\rho_2, 2\rho_2\ldots,2\pi\}\right\rbrace.
\end{multline*}
Consider  the family $\Cc$ of cells $\Cell(x,\rho_0),$ where $x\in\X.$
We see easily that when $\rho_2$ is small enough and when a constant $0<\rho_3\ll \rho_1$ is   small  enough, 
the following property holds:

\noindent {\bf Property (i).}  {\it
  For every point $x\in (s\D\setminus\{ 0\})^2,$ there  exists at least one cell $C\in\Cc$
  such that    $\Cell(x,\rho_3)\subset C.$ 
 }

In particular,  Property (i) implies that
$$(s\D)^2\setminus(\{z=0\}\cup \{w=0\})=\bigcup_{x\in\X} \Cell(x,\rho_0).$$
Moreover, we can check that  the following  property  also  holds.
 
\noindent {\bf Property (ii).}  {\it There is $K\in\N$ such that each point in $(s\D\setminus\{0\})^2$ belongs to at most $K$
cells in the family $\Cc.$}    

Let $0<r<r_N$ be  arbitrary. 
 Combing Lemma \ref{L:near_by_z=w^N} and  Lemma \ref{L:near_C_0,N}, we see that  
 for every $\alpha\in\T$ and  $ k\in \N$   with $\xi_{0,N,\alpha,k} \in (s\D)^2\setminus \B_{r^{1/N}|\log r|^{3/N}}, $
 \begin{equation}\label{e:nearby}
\dist_C(\xi_{r,N,\alpha,k},\xi_{0,N,\alpha,k})\lesssim  |\log r|^{-3}\quad\text{and}\quad
\dist_C(\xi_{N,\alpha,k},\xi_{0,N,\alpha,k})\approx N^{-1}.
\end{equation}
Fix   $\alpha_0\in\T.$ 
 Suppose that there is   $k\in\N$ such that
$\xi_{0,N,\alpha_0,k}\in (s\D)^2\setminus \B_{r^{1/N}|\log r|^{3/N}}.$
By   Property (i), we may find 
 a cell $C=C_k\subset \Cc$   such that $C':=\Cell(\xi_{0,N,\alpha_0,k},\rho_3)\subset C.$
 Set $\rho_4:= 1/4 c_0^{-2}\rho_3,$ where the constant $c_0>1$ is introduced  just before Definition \ref{D:cell}. 
 Next, using \eqref{e:nearby} we can  check that there  are constants  $c'>1$ and $0<\rho_5\ll 1$  depending  only on $\rho_0,\rho_1,\rho_2,\rho_3$ 
(in particular, they are independent of $r$ and $N$) with the  following property: 

There is an open ball  $W_{\alpha_0}$ with center $\alpha_0\in\T$ and  radius $\rho_5$ in $\T,$ where $\T$ is  defined in  \eqref{e:cano_trans_T},  
 and  an interval  $S_{r,\alpha_0}\subset \N$  such that 

$\bullet$  $c'^{-1}N\leq \#S_{r,\alpha_0} \leq c' N,$ where $\#$ denotes the cardinality;

$\bullet$
   for every $k\in S_{r,\alpha_0}$ and $\alpha\in W_{\alpha_0},$
all  three points $\xi_{N,\alpha,k},\,\xi_{0,N,\alpha,k},\,\xi_{r,N,\alpha,k}  $ not only  belong to 
the bidisc $(s\D)^2,$  but also belong to the cell $C'':=\Cell(\xi_{0,N,\alpha_0,k},\rho_4).$

Let $\T_{C''}$ be a transversal of $C''$ in the sense of Definition \ref{D:cell}. For  every $W\subset\T,$ let
$\widehat{L}_W:=\bigcup_{\alpha\in W} \widehat{L}_\alpha.$
 Set  $$\T_{C'',\alpha_0}:= \T_{C''}\cap \widehat{L}_{W_{\alpha_0}} .$$ This is    a nonempty open subset of $\T_{C''}.$
This, combined  
 with \eqref{e:nearby},  allows us to apply Proposition \ref{P:two_curves_in_cell}
 to  two algebraic curves $\frak{C}_{0,N}$ and $\frak{C}_{r,N}$ in the cells $C'$ and $C''.$
 The  only change is  that we use  the constants $\rho_3,\rho_4$ instead of $\rho_0,\rho_1.$
 Consequently, we get  a constant $c'>0$ such  that 
\begin{multline*}
\int_{\alpha\in \T_{C'',\alpha_0} } |h_\alpha(\xi_{0,N,\alpha,k}  )-h_\alpha(\xi_{r,N,\alpha,k} )|d\nu(\alpha)\\
\lesssim    
 c' {|\log{| \xi_{0,N,\alpha_0,k_0}|}|^2\| T\wedge g_P\|_{C'} \over  \| \xi_{0,N,\alpha_0,k_0} \| }  \cdot \sup_{\alpha\in \T}   \|\xi_{0,N,\alpha,k}-\xi_{r,N,\alpha,k} \|.
 \end{multline*}
Using  the first estimate in \eqref{e:nearby} and the inequality
$ | \xi_{0,N,\alpha_0,k_0}|\geq  r^{1/N}|\log r|^{3/N},$ the  right hand side is bounded from above by a constant times
$|\log r|^{-1}\| T\wedge g_P\|_C.$ On the other hand, the  left hand side
is bounded from below by  $|[\frak{C}_{0,N}]\wedge T -[\frak{C}_{r,N}]\wedge T\|_{C \cap \widehat{L}_{W_{\alpha_0}}}.$
Hence, for $C=C_k$ we have that 
$$|[\frak{C}_{0,N}]\wedge T -[\frak{C}_{r,N}]\wedge T\|_{C''\cap  \widehat{L}_{W_{\alpha_0}}}\lesssim |\log r|^{-1}\| T\wedge g_P\|_C.$$
Summing up the last inequality over all $k\in S_{r,\alpha_0},$  
 and using Property (ii)  above, we  get that
 $$|[\frak{C}_{0,N}]\wedge T -[\frak{C}_{r,N}]\wedge T\|_{ \widehat{L}_{W_{\alpha_0}}}\lesssim |\log r|^{-1}\| T\wedge g_P\|_X.$$
 A  compactness argument shows that  we can cover $\overline\T$
 by a finite number of open sets $W_{\alpha_0}.$
Applying the above  estimate  to each element of this  cover and summing up the obtained estimates,
\eqref{e:mass_estimates_2_red} follows  and   we are done.
\hfill $\square$

\noindent{\bf End of  the proof of estimate \eqref{e:mass_estimates_3} in Proposition  \ref{P:mass_1}.}
We  argue  as in the above  proof of  estimate \eqref{e:mass_estimates_2} in Proposition  \ref{P:mass_1}
using 
Lemma \ref{L:discrepancy_C_r,N_and_C_r}
instead  of Lemma    \ref{L:near_C_0,N}. We only  point out here the  necessary  modification.
Fix   $\alpha_0\in\T.$ 
 Suppose that there is   $k\in\N$ such that
$\xi_{r,\alpha_0,k}\in \B_{r^{1/N}|\log r|^{-3/N}}.$
By   Property (i) in the previous proof, we may find 
 a cell $C=C_k\subset \Cc$   such that $\Cell(\xi_{r,\alpha_0,k},\rho_3)\subset C.$
 Thus, there is an open ball   $W_{\alpha_0}$ with center $\alpha_0$ and  radius $\rho_5$ in $\T$
 and  an interval $S_{r,\alpha_0}\subset \N$  such that  $c'^{-1}N\leq \#S_{r,\alpha_0} \leq c' N$ points,
and that 
   for every $k\in S_{r,\alpha_0}$ and $\alpha\in W_{\alpha_0},$
the two points $\xi_{r,\alpha,k}$ and $\xi_{r,N,\alpha,k}  $ belong to the cell $C'':=\Cell(\xi_{r,\alpha_0,k},\rho_4).$
 
 Next, using  Lemma \ref{L:discrepancy_C_r,N_and_C_r} instead of the first estimate in \eqref{e:nearby},
 we conclude the proof  as in  the  previous  one.
\hfill $\square$

\section{Mass of  $T\wedge [\frak{C}_r],$   $T\wedge [\frak{C}_{0,N}],$
 $T\wedge [\frak{C}_{r,N}]$ on the corona   $\A_{r,N}$}
 \label{S:on_corona}
   
  The objective of this  section is  to establish Proposition \ref{P:mass_2}. 
Recall   that for every $s>0,$  the function $K_s:\ \R\to\R^+$ is  given by
 \eqref{eq_K_s}.

  \subsection{Mass of  $T\wedge [\frak{C}_r]$ on $\A_{r,N}$}
  In order to prove the first  inequality of this  proposition,  we  consider, for each $s>1$ and $N\in\N\setminus\{0\},$ the following  domain in $\R:$
\begin{equation}\label{e:D_s_N}
  D_{s,N}:=\{ t\in\R^+:\   N^{-1}(s-3\log s)\leq  t\leq   N^{-1}( s+3\log s)\},
 \end{equation}
 and the  function $K^{(1)}_{s,N}:\   \R\to\R^+$ given  by
\begin{equation}\label{eq_K^1_s,N}
K^{(1)}_{s,N}(y):=     \int_{D_{s,N} }{V\over V^2+( y-U)^2}  dt,\qquad y\in \R,
\end{equation}
Here $U,$ $V$  are functions of the variable $t$ and the parameter $s$ which satisfy the following  system
of equations (see \eqref{eq_u,v_vs_U,V},  \eqref{e:t}  and \eqref{eq_u,v_vs_z,w}):
  $$U+iV=(u+is)^\gamma\qquad\text{and}\qquad  t=bu+as.$$   
It is  worthy  comparing  the  domain  $D_{s,N}$ (resp. the function
$ K^{(1)}_{s,N}$) with  the  domains $D_s$  (resp. the function
$ \widetilde {K}_s$)
 given in  \eqref{e:widetilde_K_s}.
\begin{lemma}\label{L:K^1_s,N} For every $0<r<1/2,$
\begin{equation*}
 \| T\wedge [\Cf_r]\|_{    \A_{r,N}}\approx 
\int_{\alpha\in\T}\Big( \int_{-\infty}^\infty  K^{(1)}_{ - \log r,N}(y)\tilde H_\alpha (y)dy
\Big) d\nu(\alpha). 
\end{equation*}
\end{lemma}
\begin{proof}
Using  \eqref{e:para-inter_C_r}, 
\eqref{eq_parametrization_DS}  and \eqref{eq_u,v_vs_z,w}, we  see  that  $\xi_{r,\alpha,k}\in \A_{r,N}$
if and only if   
\begin{multline*}
{1\over 2\pi b}\Big( (N^{-1}-a)(-\log r)  -3\log (-\log r)+ \log {|\alpha|}\Big)\leq  k\\
\leq{1\over 2\pi b}\Big( (N^{-1}-a)(-\log r)  +3\log (-\log r)+ \log {|\alpha|}\Big).
\end{multline*}
Let $\Z^1_{r,N,\alpha}$  be the set of all  integers $k$ satisfying  the last inequalities.
Observe that
$$
 \| T\wedge [\Cf_r]\|_{    \A_{r,N}}=\int_{\alpha\in\T}\Big(\sum_{k\in \Z^1_{r,N,\alpha}} h_\alpha(\xi _{r,\alpha,k})\Big) d\nu(\alpha).
$$  
 For $\alpha\in\T$ let $D^\alpha:= D_{- \log r,N},$
  $t_{r,\alpha,k}:=bu_{\alpha,k}+av_r,$ $k\in \Z.$ Then  $k\in \Z_{r,N, \alpha}$ if and only if
$t_{r,\alpha,k}\in D^\alpha.$  Moreover, consider the function $\chi^\alpha:\ D^\alpha\to\C$ given by
$$
\chi^\alpha (t):=u+iv_r,\qquad\text{where}\quad v_r=-\log r\quad\text{and}\quad t=bu+av_r.
$$ 
Let $m=1$ and choose $\rho>1$  large  enough.
Using  Harnack's inequality, we see that the assumption  of Part 1) of  Proposition \ref{P:interpretation} 
is also fulfilled. In other words, by  Definition \ref{D:interpretation} we obtain an interpretation $(K^\alpha)_{\alpha\in\T}$
of the geometric intersection $T\wedge [\Cf_r]$ on $ \A_{r,N}   $ with mesh $m=1.$
 Consequently, applying  Part 1) of Proposition \ref{P:interpretation}    the lemma follows.
\end{proof}

\begin{lemma}\label{L:comparison_bis} There is a constant $c>1$ such that 
for all $y\in \R,$   $s>1$ and $N\in\N\setminus\{0\},$  
\begin{equation*}
        K^{(1)}_{s,N}(y)\leq c  s^{-1}(\log s)  K_{s}(y) . 
\end{equation*}
\end{lemma}
\begin{proof} We follow the method of 
 proof of Lemma \ref{L:comparison}. Let  $c_2,c_3$ be the constants with $c_3>c_2>1$ given by Lemma \ref{lem_Poisson_kernel}.
 We consider three cases.

\noindent {\bf Case 1:} $  s\geq c_2   (1+ | y|)^{1/\gamma} .$ 

By  Part 2) of Lemma \ref{lem_Poisson_kernel} and by formula \eqref{eq_K^1_s,N},
we have that
$$
K^{(1)}_{s,N}(y)\approx    \int_{t=s/N-3(\log s)/N}^{t=s/N+3(\log s)/N}  {(s/N) dt\over s^{\gamma+1}}    
\approx  N^{-2} s^{-\gamma}\log s.$$
This, compared with formula   \eqref{eq_K_s}, completes 
 the proof of   Case 1.

\noindent {\bf Case 2:}   $  c_3^{-1} (1+ | y|)^{1/\gamma} \leq  s\leq c_2 (1+ | y|)^{1/\gamma}.$ 

Applying  Part 5) of Lemma \ref{lem_Poisson_kernel}, we get   that
$$
K^{(1)}_{s,N}(y)\approx    \int_{t=s/N-3(\log s)/N}^{t=s/N+3(\log s)/N}    
  { (1+|y|)^{1/\gamma-1} tdt \over t^2+  (s-\rho(y,t) )^2 }, 
$$
where $\rho(y,t)$ satisfies  $c^{-1}_2(1+ | y|)^{1/\gamma} \leq \rho(y,t)\leq c_2(1+ | y|)^{1/\gamma}.$ 
A straightforward computation  shows that the right hand side is $\approx N^{-1}s^{-1}\log s (1+ | y|)^{1/\gamma-1}.$  Comparing this with formula   \eqref{eq_K_s},   the proof of Case 2 is complete.
   
\noindent {\bf Case 3:} $   s  \leq  c_3^{-1}     (1+ | y|)^{1/\gamma}.$

Applying  Part 1) and Part 3) of Lemma \ref{lem_Poisson_kernel}, we get that  
$$
K^{(1)}_{s,N}(y) \approx \int_{t=s/N-3(\log s)/N}^{t=s/N+3(\log s)/N}    {t^{\gamma-1}s    dt\over (1+|y|)^2}
\approx 
  N^{-\gamma}s^{\gamma}\log s (1+|y|)^{-2 }.$$
  This, compared with formula  \eqref{eq_K_s},  allows us to conclude the proof of the last case.
   \end{proof}
 \noindent {\bf End of the proof of inequality \eqref{e:mass_estimates_4}  in Proposition \ref{P:mass_2}.}
 Applying  Lemma \ref{L:K^1_s,N} and then Lemma  \ref{L:comparison_bis}, we  see that  for every $0<r<1/2,$
\begin{eqnarray*}
 \| T\wedge [\Cf_r]\|_{    \A_{r,N}}&\approx& 
\int_{\alpha\in \T}\Big( \int_{-\infty}^\infty  K^{(1)}_{ - \log r,N}(y)\tilde H_\alpha(y)dy
\Big) d\nu(\alpha)\\
&\lesssim& N^{-1}
 (-\log r)^{-1}\log(-\log r)  \int_{ \alpha\in \T}\Big( \int_{-\infty}^\infty  K_{ - \log r}(y)\tilde H_\alpha(y)dy
\Big) d\nu(\alpha).
\end{eqnarray*}
By Lemma \ref{lem_estimate_G} 
and  identity \eqref{e:Lelong}, the integral in the last line is uniformly bounded in $r.$
 The proof is thereby  completed.
\hfill $\square$

\subsection{Mass of  $T\wedge [\frak{C}_{0,N}]$ on $\A_{r,N}$}

  The next part of this  section is  devoted to the  proof of the second  inequality of Proposition \ref{P:mass_2}.
  We  consider, for each $s>1$ and $N\in\N\setminus\{0\},$ the  function $K^{(2)}_{s,N}:\   \R\to\R^+$ given  by
\begin{equation}\label{eq_K_2_star_s,N}
K^{(2)}_{s,N}(y):=  N^2   \int_{ D_{s,N} }{V\over V^2+( y-U)^2}  dt,\qquad y\in \R.
\end{equation}
Here the domain  $D_{s,N}$  is  given in \eqref{e:D_s_N}, and $U,$ $V$  are functions of the variable $t$   which satisfy the following  system
of equations (see \eqref{eq_u,v_vs_U,V},  \eqref{e:t}  and \eqref{eq_u,v_vs_z,w}):
  $$U+iV=(u+iv)^\gamma\quad\text{and}\quad  t=bu+av \quad\text{and}\quad v=Nt+\log {|d|},$$ 
 where $d:=-a_N$ (see \eqref{e:z_N_expansion} and 
Lemma \ref{L:near_by_z=w^N} for $a_N$).

\begin{lemma} \label{L:K^star,1_s,N} For every $0<r<1/2,$
\begin{equation*}
 \| T\wedge [\Cf_{0,N}]\|_{    \A_{r,N}}\approx 
\int_{\alpha\in\T}\Big( \int_{-\infty}^\infty  K^{(2)}_{ - \log r,N}(y)dy
\Big) d\nu(\alpha). 
\end{equation*}
\end{lemma}
\begin{proof}
Using  \eqref{e:para-inter_z=w^N}, \eqref{e:system_u_and_v}, \eqref{e:t_N_alpha_k},
\eqref{eq_parametrization_DS}, \eqref{e:t}  and \eqref{eq_u,v_vs_z,w}, we  see  that  $ \xi_{N,\alpha,k}\in \A_{r,N}$
if and only if   
\begin{equation}\label{e:cond_t_N_alpha_k}
 (-\log r)  -3\log (-\log r) \leq  t_{N,\alpha,k}
\leq (-\log r)  +3\log (-\log r) .
\end{equation}
Let $\Z^1_{N,\alpha}$  be the set of all  integers $k$ satisfying  the last inequalities.
Observe that
\begin{eqnarray*}
 \| T\wedge [\Cf^d_{N}]\|_{    \A_{r,N}}&=&\int_{\alpha\in\T}\Big(\sum_{k\in \Z^1_{N,\alpha}} h_\alpha(\xi_{N,\alpha,k})\Big) d\nu(\alpha)\\
 \| T\wedge [\Cf_{0,N}]\|_{    \A_{r,N}}&=&\int_{\alpha\in\T}\Big(\sum_{k\in \Z^1_{N,\alpha}} h_\alpha(\xi_{0,N,\alpha,k})\Big) d\nu(\alpha).
\end{eqnarray*}  
On the other hand, by Lemma \ref{L:near_by_z=w^N}
we know that  $\xi_{N,\alpha,k}$ and  $\xi_{0,N,\alpha,k}$ are compatible for $k\in \Z^1_{N,\alpha}.$
Hence, by Harnack's inequality,  there is a constant $c>1$  such that
$$
 c^{-1}h_\alpha(\xi_{N,\alpha,k})\leq h_\alpha(\xi_{0,N,\alpha,k})\leq c h_\alpha(\xi_{N,\alpha,k}).
$$
This, combined with the above equalities, implies that
$$
\| T\wedge [\Cf_{0,N}]\|_{    \A_{r,N}}\approx \int_{\alpha\in\T}\Big(\sum_{k\in \Z^1_{N,\alpha}} h_\alpha(\xi_{N,\alpha,k})\Big) d\nu(\alpha).
$$
So we need to show that the right hand side in the last line  is  equivalent to the right hand side of the lemma.
  For $\alpha\in\T$ let $D^\alpha:= D_{-\log r,N},$
  $t_{N,\alpha,k}:=bu_{N,\alpha,k}+av_{N,\alpha,k},$ $k\in \Z.$ Then 
   $k\in \Z^1_{N,\alpha}$ if and only if
$t_{N,\alpha,k}\in D^\alpha.$
   Moreover, consider the function $\chi^\alpha:\ D^\alpha\to\C$ given by
$$
\chi^\alpha (t):=u_N(t)+iv_N(t),\quad\text{where}\quad v_N(t)=Nt+\log{|d|}\quad\text{and}\quad t=bu_N(t)+av_N(t).
$$ 
By \eqref{e:t_N_alpha_k}, \eqref{e:difference_u_v} and \eqref{e:approx_pace_u_v}, we  choose
 $m=N^{-2}.$ Moreover, take $\rho>1$  large  enough.
Using  Harnack's inequality, we see that the  assumption of  Part 1)  of  Proposition \ref{P:interpretation} 
is  fulfilled. Hence,   we obtain an interpretation $(K^\alpha)_{\alpha\in\T},$ in the sense of  Definition \ref{D:interpretation},
of the geometric intersection $T\wedge [\Cf_r]$ on $ \A_{r,N}   $ with mesh $m=N^{-2}.$
 Consequently, applying  Part 1) of Proposition \ref{P:interpretation}    the lemma follows. 
\end{proof}

\begin{lemma}\label{L:comparison_bis_bis} There is a constant $c>1$ such that 
for all $y\in \R$  and $s>1$ and $N\in\N\setminus\{0\},$  
\begin{equation*}
        K^{(2)}_{s,N}(y)\leq c N^2s^{-1}(\log s)  K_{s}(y) . 
\end{equation*}
\end{lemma}
\begin{proof} We follow the method of 
 proof of Lemma \ref{L:comparison}. Let  $c_2,c_3$ be the constants with $c_3>c_2>1$ given by Lemma \ref{lem_Poisson_kernel}.
 We consider three cases.

\noindent {\bf Case 1:} $  s\geq c_2   (1+ | y|)^{1/\gamma} .$ 

By  Part 2) of Lemma \ref{lem_Poisson_kernel} and by formula (\ref {eq_K_2_star_s,N}),
we have that
$$
K^{(2)}_{s,N}(y)\approx    N^2\int_{t=s/N-3(\log s)/N}^{t=s/N+3(\log s)/N}  {t dt\over (Nt)^{\gamma+1}}    
\approx   s^{-\gamma}\log s.$$
This, compared with formula   \eqref{eq_K_s}, completes 
 the proof of   Case 1.

\noindent {\bf Case 2:}   $  c_3^{-1}  (1+ | y|)^{1/\gamma}\leq  s\leq c_2 (1+ | y|)^{1/\gamma}.$ 

Applying  Part 5) of Lemma \ref{lem_Poisson_kernel}, we get   that
$$
K^{(2)}_{s,N}(y)\approx   N^2 \int_{t=s/N-3(\log s)/N}^{t=s/N+3(\log s)/N}    
  { (1+|y|)^{1/\gamma-1} tdt \over t^2+  (Nt-\rho(y,t) )^2 }, 
$$
where $\rho(y,t)$ satisfies  $c^{-1}_2(1+ | y|)^{1/\gamma} \leq \rho(y,t)\leq c_2(1+ | y|)^{1/\gamma}.$ 
A straightforward computation  shows that the right hand side is $\approx N s^{-1}\log s (1+ | y|)^{1/\gamma-1}.$ On the other hand, by  formula   \eqref{eq_K_s}  we have that 
   $K_s(y)\approx  (1+|y|)^{1/\gamma-1}.$  This  completes  the proof  of Case 2.
   
\noindent {\bf Case 3:} $   s  \leq  c_3^{-1}     (1+ | y|)^{1/\gamma}.$

Applying  Part 1) and Part 3) of Lemma \ref{lem_Poisson_kernel}, we get that  
$$
K^{(2)}_{s,N}(y) \approx N^2\int_{t=s/N-3(\log s)/N}^{t=s/N+3(\log s)/N}    {t^{\gamma-1} (Nt)    dt\over (1+|y|)^2}
\approx 
  N^{2-\gamma}s^{\gamma}\log s (1+|y|)^{-2 }.$$ 
 Since  we know  by  formula   \eqref{eq_K_s}  that 
   $K_s(y)\approx  (1+|y|)^{1/\gamma-1},$ 
 the proof of   the last case, and hence the lemma, is  thereby completed.
   \end{proof}
 \noindent {\bf End of the proof of inequality \eqref{e:mass_estimates_5}  in Proposition \ref{P:mass_2}.}
 Applying  Lemma \ref {L:K^star,1_s,N}  and then Lemma  \ref{L:comparison_bis_bis}, we  see that  for every $0<r<1/2,$
\begin{eqnarray*}
 \| T\wedge [\Cf_{0,N}]\|_{    \A_{r,N}}&\approx& 
\int_{ \alpha \in\T}\Big( \int_{-\infty}^\infty  K^{(2)}_{ - \log r,N}(y)\tilde H_\alpha(y)dy
\Big) d\nu(\alpha)\\
&\lesssim& N
 (-\log r)^{-1}\log(-\log r)  \int_{ \alpha \in\T}\Big( \int_{-\infty}^\infty  K_{ - \log r}(y)\tilde H_\alpha(y)dy
\Big) d\nu(\alpha).
\end{eqnarray*}
By Lemma \ref{lem_estimate_G} 
and  identity \eqref{e:Lelong}, the integral in the last line is uniformly bounded in $r.$
 The proof is thereby  completed.
\hfill $\square$

\subsection{Mass of  $T\wedge [\frak{C}_{r,N}]$ on $\A_{r,N}$}

  The remainder of the  section is  devoted to   the proof of inequality \eqref{e:mass_estimates_6}  in Proposition \ref{P:mass_2}.   

Let $0<\kappa=\kappa_N\ll 1$ be  a  very small  constant whose  exact value will be determined later on, and let $r_N>0$ be
the 
 constant satisfying both   Lemmas \ref{L:near_C_0,N} and  \ref{L:discrepancy_C_r,N_and_C_r}. 
Write 
\begin{equation}\label{e:mass_C_r,N_into_3_parts}
\| T\wedge [\Cf_{r,N}]\|_{    \A_{r,N}} =
\| T\wedge [\Cf_{r,N}]\|_{\A_{r,N}\cap \B_{\kappa r^{1/N}}} +
\| T\wedge [\Cf_{r,N}]\|_{    \A_{r,N}\setminus \B_{\kappa r^{1/N}}}=:I+II.
\end{equation}
 Arguing as in the proof of   Lemma  
 \ref{L:discrepancy_C_r,N_and_C_r}  and replacing the ball  $ \B_{r^{1/N} |\log r|^{-3/N}}$
 with  $ \B_{\kappa r^{1/N} }$ and choosing $0<\kappa<1$ small enough, we obtain  the following  weaker result for every $0<r<r_N$
 and $\alpha\in\T:$
 The intersection of the curve $\frak{C}_{r,N}$ with the Riemann surface
 $\widehat{L}_\alpha$ inside the ball $\B_{\kappa r^{1/N}}$ can be enumerated as $\xi_{r,N,\alpha,k} $
such that   $\xi_{r,N,\alpha,k}$ and $\xi_{r,\alpha,k}$ are compatible, where 
 $  k\in\N$ such that $ \xi_{r,\alpha,k}\in \B_{\kappa r^{1/N} } .$
  Consequently, we get that
$$
 I=\int_{ \alpha\in\T}\Big(\sum h_\alpha( \xi_{r,N,\alpha,k})\Big) d\nu(\alpha),
$$  
where  the sum is  taken  over all $k\in\N$ such that
 $\zeta_{r,\alpha,k}\in    \A_{r,N}\cap  \B_{\kappa r^{1/N}}. $
  Moreover, using that   $ \zeta_{r,N,\alpha,k}$ and $ \zeta_{r,\alpha,k}$ are compatible,   an application of  Harnack's inequality gives that
 $$h_\alpha( \xi_{r,N,\alpha,k})\leq c'h_\alpha(\xi_{r,\alpha,k} )\quad\text{for some constant}\quad c'>1.
$$
Therefore, we infer that
 $$
 I\leq c'\int_{ \alpha\in\T}\Big(\sum h_\alpha( \xi_{r,\alpha,k} )\Big) d\nu(\alpha)=
c'\| T\wedge [\Cf_r]\|_{    \A_{r,N}\cap \B_{\kappa r^{1/N}}}.
$$ 
The right hand side is  bounded from above by $c'\| T\wedge [\Cf_r]\|_{    \A_{r,N}}.$
This,  coupled with \eqref{e:mass_estimates_4}, implies that
\begin{equation}\label{e:est_I}
I\leq c(-\log r)^{-1}\log(-\log r) .
\end{equation}

Next, we turn to  $(II).$
Observe  that every point $(z,w)\not\in\B_{\kappa r^{1/N}}   $ with $z_N(z,w)=r$
satisfies the assumption of  
 Proposition \ref{P:z_N_expression} for  the sequence $M_N:=8^N$ as  in \eqref{e:choice_K_N}.
  Therefore,  we have for  such  a point that
 \begin{equation}\label{e:case_3_on_corona}
 |z_N(z,w)  -(z_\infty(z,w)+a_Nw^N)|\leq  8^{-N}|a_N||w|^N.
 \end{equation}
Consequently, we can argue as in the proof of   Lemma  
 \ref{L:near_C_0,N} while replacing the ball  $ \B_{r^{1/N} |\log r|^{3/N}}$
 with  $ \B_{\kappa r^{1/N} }.$  Thus we obtain  the following  weaker fact   than    Lemma  
 \ref{L:near_C_0,N}.

 \medskip
 
 \noindent  {\bf Claim.  }{\it For $N$  large  enough, there  exist two numbers $\Gamma_N$ and $\Lambda_N$  such that by reducing $r_N$ if necessary, 
  for every $0<r<r_N$
 and $\alpha\in\T,$    the following two properties hold:
 \begin{itemize}
  \item [(i)]
 for every point $\xi_1\in (\frak{C}_{0,N}\cap \widehat{L}_\alpha) \cap( \A_{r,N}\setminus \B_{\kappa r^{1/N} }),$
 there  exist at least  one point and  at most  $\Gamma_N$ points $\xi_2\in (\frak{C}_{r,N}\cap \widehat{L}_\alpha) \cap( \A_{r,N}\setminus \B_{\kappa r^{1/N} }) $   
such that  $\xi_1$ and $\xi_2$ are quasi-compatible in the sense of 
Definition \ref{D:points_compatible}
and $\dist_C(\xi_1,\xi_2)\leq \Lambda_N;$
   \item[(ii)]
   for every point $\xi_1\in (\frak{C}_{r,N}\cap \widehat{L}_\alpha) \cap( \A_{r,N}\setminus \B_{\kappa r^{1/N} }),$
 there  exist at least  one point and  at most  $\Gamma_N$ points $\xi_2\in (\frak{C}_{0,N}\cap \widehat{L}_\alpha) \cap( \A_{r,N}\setminus \B_{\kappa r^{1/N} }) $   
such that  $\xi_1$ and $\xi_2$ are quasi-compatible and $\dist_C(\xi_1,\xi_2)\leq \Lambda_N.$
   \end{itemize}

  }

 \medskip
 
 \noindent
{\bf Sketchy proof of the  claim.}  We only prove   assertion (i) since assertion (ii) can be done similarly.
Unlike  the  proof of  Lemma  \ref{L:near_C_0,N}, $s$ in this  claim  is a large positive number.  
Arguing as  in the  proof of  \eqref{e:r(logr)-3}, we may find a constant $c_N>1$  such that 
\begin{equation*}
r<  c_N\kappa^{-1}|a_Nw_1^N|.
\end{equation*}
Now  we choose $M$ large enough ($M$ depending on $N$). In fact, instead of \eqref{e:s}
   $s$ in this  claim    is  of the  form
\begin{equation}\label{e:s_bis}
s:=c'  \kappa^{-1} \qquad \mbox{for $ c'=c'_N>0$ a large constant independent of $r$}.
\end{equation}
As in the  proof of  Lemma  \ref{L:near_C_0,N} we want  to estimate the number of  roots $t$ of
the  following holomorphic function on $\D_s$    defined   by \eqref{e:F(t)_bis}:
  \begin{equation*}
 F(t):=R_r(e^{\lambda t}w_1)  -\tilde{a}_Ne^{\lambda Nt}w_1^N+h(e^{\lambda t}w_1)+ \tilde{a}_Ne^tw_1^N -e^th(w_1),
\qquad   t\in \D_s.
 \end{equation*}
Consider again the  function 
\begin{equation*}
H(t):=-\tilde{a}_Ne^{\lambda Nt}w_1^N +\tilde{a}_Ne^tw_1^N,\qquad t\in  \overline{\D}_s.
\end{equation*}
Since  $H(t)=0$ if and only if  $t={2i\pi k\over \lambda N-1}$ for $k\in \Z,$
 we may choose    $s$ and $c'$  large enough (depending only on $N$ and $\lambda$) such that 
$$
|H(t)|\geq   c'^{-1} |\tilde{a}_Nw_1^N|\qquad\text{for}\qquad  t\in \partial\D_s,
$$
and that  
 $H$ admits a finite  number of roots, say  $\Gamma_N\geq 1$ roots  on $\D_s.$
Using this and   
 \eqref{theta_N}, \eqref{e:f_theta}, \eqref{e:quotient},   \eqref{e:R_r}, \eqref{e:r(logr)-3}, \eqref{e:s_bis} and  \eqref{e:R_r_bis},
 we can  show that
 $$
 |F(t)-H(t)|<H(t)\qquad\mbox{for $t\in\partial\D_s.$}
 $$
 Consequently, 
applying  Rouch\'e's theorem again to $F$ and $H,$
the claim follows with $\Lambda_N:=s.$ 
\hfill $\square$

\medskip

Using the  claim  we may find a constant $c''=c''_N>1$ such  that
$$
 II\leq c''\int_{ \alpha\in\T}\Big(\sum h_\alpha(\xi_{0,N,\alpha,k})\Big) d\nu(\alpha)=
c''\| T\wedge [\Cf_{0,N}]\|_{    \A_{r,N}\setminus \B_{\kappa r^{1/N}}},
$$ 
where  the sum is taken over all $k\in\N$
such that $\xi_{0,N,\alpha,k}\in \A_{r,N}\setminus \B_{\kappa r^{1/N}}.$
The right hand side is  bounded from above by $c''\| T\wedge [\Cf_{0,N}]\|_{    \A_{r,N}}.$
%
 %
%
This,  coupled with \eqref{e:mass_estimates_5}, implies that
\begin{equation}\label{e:est_II}
II\leq c(-\log r)^{-1}\log(-\log r) .
\end{equation}

 \noindent {\bf End of the proof of inequality \eqref{e:mass_estimates_6}  in Proposition \ref{P:mass_2}.}
  Putting  \eqref{e:mass_C_r,N_into_3_parts}, \eqref{e:est_I} and \eqref{e:est_II} altogether,  
 \eqref{e:mass_estimates_6} follows.
\hfill $\square$

\section{Completion of the  reductions}\label{S:completion}

 In the first part of this   section  we complete the proof of    Proposition \ref{P:key_interpretations}.
More specifically, we will show  that 
the  geometric intersection $T\wedge [\Cf_{0,N}]$ on  $    \B_{r^{1/N}|\log r|^{-3/N}}$ admits a coherent interpretation
  $K^*_{-\log r,N}$  satisfying   the conclusion  of
this proposition. As a consequence, the  second part  is devoted  to the proof of   Proposition \ref{P:comparison_bis}.

\subsection{End of the proof of    Proposition \ref{P:key_interpretations}}
The proof is  divided into 
4 steps.

\noindent  {\bf Step 1:} {\it Construction of a coherent interpretation with mesh $N^{-2}.$}

Let $d:=-a_N$ (see \eqref{e:z_N_expansion} and 
Lemma \ref{L:near_by_z=w^N} for $a_N$).
We  consider, for each $s>1$ and $N\in\N\setminus\{0\},$
   the following  domain in $\R:$
\begin{equation}\label{e:D^star_s_N}
  D^*_{s,N}:=\{ t\in\R^+:\     t\geq   N^{-1}( s-3\log s)\},
 \end{equation}
and the  function $K^{*}_{s,N}:\   \R\to\R^+$ given  by
\begin{equation}\label{eq_widetilde_K_star_s,N}
K^{*}_{s,N}(y):=  N^2   \int_{ D^*_{s,N} }{V\over V^2+( y-U)^2}  dt,\qquad y\in \R.
\end{equation}
Here  $U,$ $V$  are functions of the variable $t$   which satisfy the following  system
of equations (see \eqref{eq_u,v_vs_U,V},  \eqref{e:t}  and \eqref{eq_u,v_vs_z,w}):
  $$U+iV=(u+iv)^\gamma\quad\text{and}\quad  t=bu+av \quad\text{and}\quad v=Nt+\log {|d|}.$$ 
For $\alpha\in\T,$ set $D^\alpha:= D^*_{-\log r,N},$ and  $K^\alpha:=K^{*}_{-\log r,N},$
and $\chi^\alpha(t)=u_\alpha(t)+iv_\alpha(t),$ $t\in D^\alpha.$ Here $u_\alpha$ and $ v_\alpha$ are affine functions in $t$
such that $u_\alpha(t_{N,\alpha,k})=u_{N,\alpha,k}$ and  $v_\alpha(t_{N,\alpha,k})=v_{N,\alpha,k}$ for $ k\in\N$  (see \eqref{e:system_u_and_v}--\eqref{e:t_N_alpha_k} in Lemma \ref{L:para-inter_z=w^N}).
We will show that $(K^\alpha)_{\alpha\in\T}$ is a coherent interpretation of the geometric intersection
 $T\wedge [\Cf_{0,N}]$ on  $    \B_{r^{1/N}|\log r|^{-3/N}}.$

By Lemma  \ref{L:near_by_z=w^N} and using  \eqref{eq_parametrization_DS} we know that  
 for every $\alpha\in\T,$ 
 each point $\xi=(z,w)=\psi_\alpha(u+iv)\in
   \Cf^d_N=\{z=dw^N\}\cap \widehat{L}_\alpha$   corresponds  to 
   a unique point $\xi=(z,w)=\psi_\alpha(u+iv)\in
    \frak{C}_{0,N}  \cap \widehat{L}_\alpha$  
such that $\xi $ and $\xi'$ are compatible  and 
$
  \dist_C(\xi_{N,\alpha,k},\xi_{0,N,\alpha,k})\lesssim N^{-1}.$ 
Using  Definition  \ref{D:points_compatible}   we infer from the last  inequality that
\begin{equation}\label{u_u'_v_v'}
|u-u'|\lesssim N^{-1}\qquad \text{and}\qquad |v-v'|\lesssim N^{-1}.
\end{equation}
On the other hand,  by \eqref{e:difference_u_v}--\eqref{e:approx_pace_u_v} in Lemma \ref{L:para-inter_z=w^N}, 
if $u:=u_{N,\alpha,k},$ $v:=v_{N,\alpha,k},$ $u':=u_{N,\alpha,k+1},$ $v':=v_{N,\alpha,k+1}$
for some $k\in\N,$ we also  get    inequality \eqref{u_u'_v_v'}.

Using  \eqref{e:t} and  \eqref{eq_u,v_vs_U,V}, we set
$$
t=bu+av,\quad t'=bu'+av',\quad U+iV=(u+iv)^\gamma,\quad U'+iV'=(u'+iv')^\gamma.
$$
This, combined with \eqref{u_u'_v_v'}, yields that $t\approx t'$ and $v\approx v'.$
Since $\xi\in \Cf^d_N=\{z=dw^N\}\cap \widehat{L}_\alpha\cap \B_{r^{1/N}|\log r|^{-3/N}},$
it follows from \eqref{eq_u,v_vs_z,w} that
$$t\geq  -N^{-1}\log{ r}+3N^{-1}\log{(-\log r)}\quad\text{and}\quad v=Nt+\log{|d|}.
$$
Note that the second estimate in \eqref{e:approx_pace_u_v} shows that the mesh of $(K^\alpha)_{\alpha\in\T}$
should be  $N^{-2}.$
Using the  above estimates for $t,v$ and $t',v'$ and  applying   Lemma  \ref{lem_Poisson_kernel}, we can show that there is a constant $c>1$ independent of the above points $\xi,\, \xi'$
such that
$$
c^{-1}{V\over V^2+(y-U)^2}\leq {V'\over V'^2+(y-U')^2}\leq c{V\over V^2+(y-U)^2}\quad\text{for}\quad y\in\R.
$$ 
Therefore, by Definition \ref{D:interpretation}, $(K^\alpha)_{\alpha\in\T}$ is a coherent interpretation of the geometric intersection
 $T\wedge [\Cf_{0,N}]$ on  $    \B_{r^{1/N}|\log r|^{-3/N}}$ as  desired.
This completes Step 1.

\noindent  {\bf Step 2:} {\it 
There  are constant $c,\kappa>1$ independent of $N$
 such that 
for $ \kappa (1+|y|)^{1/\gamma}\leq  s,$ we have 
$ K^*_{s,N}(y) \leq  c     s^{1-\gamma}.$}

To start 
Step 2,
 let  $c_2,c_3$ be the constants with $c_3>c_2>1$ given by Lemma \ref{lem_Poisson_kernel}.
 Set  
$$\kappa:=\max\{c_2,c_3\}.$$
 By  Part 2) of Lemma \ref{lem_Poisson_kernel} and by formula  \eqref{eq_widetilde_K_star_s,N},
we have that
$$
K^{*}_{s,N}(y)\approx    \int_{t=s/N-3(\log s)/N}^{ \infty}  {N^2t dt\over (Nt)^{\gamma+1}}    
\approx   s^{1-\gamma} .$$
This  completes 
 the proof of  Step  2.

\noindent  {\bf Step 3:} {\it  There  are   constant $c,\kappa>1$ independent of $N$
 such that
for $ s\leq  \kappa^{-1}(1+|y|)^{1/\gamma}\geq  s,$ we have 
$c^{-1}\leq {K^*_{s,N}(y)\over   N(1+ | y|)^{1/\gamma -1}}\leq c  .$}

Let $\kappa$ be   given by  Step 2.
Applying  Part 5) of Lemma \ref{lem_Poisson_kernel} and using Step 2 above for $s =\kappa (1+|y|)^{1/\gamma},$ we get   that
\begin{equation}\label{e:widetilde_K_star_s,N_Case_3}
K^*_{s,N}(y)\approx (1+|y|)^{1/\gamma -1}+    \int_{t=s/N-3(\log s)/N}^{(1+|y|)^{1/\gamma}/N}    
  { (1+|y|)^{1/\gamma-1} N^2tdt \over t^2+ (Nt+\log {|d|}-  \rho(y,t))^2 }, 
\end{equation}
where $\rho(y,t)$  is defined as  follows: there exists  a    solution  $u:=u(y,t) $ of the following   equation
\begin{equation}\label{e:u(y,t)}
  U=y,\qquad U+iV=(u+it)^\gamma
  \end{equation}
 satisfying
   $ c_2^{-1}(1+|y|)^{1/\gamma}\leq u,\rho(y,t)\leq c_2(1+|y|)^{1/\gamma}$
  with $\rho(y,t):=bu+at.$ 
  
  For every $k=1,\ldots, N,$ let $t_k,u_k\in\R$ be  such that
\begin{equation}\label{e:t_k}
\rho(y,t_k)=k t_k\qquad\text{and}\qquad \rho(y,t_k):=bu_k+at_k.
\end{equation}
 Observe that
\begin{equation}\label{e:consequence_t_k}
(N-k-1)t+\log {|d|}\leq  Nt+\log {|d|}-\rho(y,t)\leq (N-k)t+\log {|d|}\quad\text{for}\quad t\in [t_{k+1},t_k].
\end{equation}
On the other hand,
we deduce from \eqref{e:t_k} and \eqref{e:u(y,t)}
that 
$$
t_k^\gamma \Re\big( (b^{-1}(k-a) +i)^\gamma \big)=\Re\big((b^{-1}(k-a) t_k+it_k)^\gamma\big)=\Re \big((u_k+it_k)^\gamma\big)= y.
$$
This, combined with the  estimate
$$
\Re\big ( (b^{-1}(k-a) +i)^\gamma\big )\approx  k^\gamma\quad\text{for large}\quad k,
$$
implies the following   estimates
$$
t_k\approx  k^{-1}|y|^{1/\gamma} ,
$$
and 
\begin{equation*}
\begin{split}
t_{k-1}-t_{k}&\approx  t_k^{\gamma+1}(t_k^\gamma -t_{k-1}^\gamma)\\
    &\approx  k^{-(\gamma+1)}|y|^{1+1/\gamma}{\Re\big( (b^{-1}(k-a) +i)^\gamma \big)-\Re\big( (b^{-1}(k-a) +i)^\gamma \big)\over  |y|}\\
    &\approx k^{-1}|y|^{1/\gamma}\left (\Re\big( (b^{-1}(1-ak^{-1}) +ik^{-1})^\gamma \big)-\Re\big( (b^{-1}(1-ak^{-1}) +ik^{-1})^\gamma \big)\right)\\
  &\approx k^{-2}|y|^{1/\gamma}.            
\end{split}
\end{equation*}
Inserting these inequalities into \eqref{e:consequence_t_k} and hence \eqref{e:widetilde_K_star_s,N_Case_3},
we get that
\begin{eqnarray*}
K^*_{s,N}(y)&\approx &(1+|y|)^{1/\gamma -1}+  N^2\sum_{k=1}^N { (t_{k-1}-t_k)(1+|y|)^{1/\gamma-1} t_k \over t_k^2+ (Nt_k+\log {|d|}-  \rho(y,t_k))^2 }\\
&\approx & 
(1+|y|)^{1/\gamma -1}+  N^2(1+|y|)^{1/\gamma-1} \sum_{k=1}^N  {(t_{k-1}-t_k) \over t_k (N-k)^2 }\\
&\approx & (1+|y|)^{1/\gamma -1}+  N^2(1+|y|)^{1/\gamma-1}\sum_{k=1}^{N-1}  {k^{-2}|y|^{1/\gamma} \over  k^{-1}|y|^{1/\gamma}(N-k)^2 }\\
&\approx & (1+|y|)^{1/\gamma -1}\big (1+ N^2\sum_{k=1}^{N-1}  { 1 \over  k(N-k)^2 }\big)\\
&\approx & N(1+|y|)^{1/\gamma -1},
\end{eqnarray*}
where in the second  $\approx$ we use  that  $-N\log2\leq \log |d|\leq  N\log 2,$
which follows, in turn, from the first two inequalities in \eqref{e:control_a_j_and_b_j}.

\noindent  {\bf Step 4:} {\it  There  are a   constant $\kappa>1$ independent of $N$
and a constant $c_N>1$
 such that 
for $\kappa^{-1}s \leq (1+|y|)^{1/\gamma}\leq \kappa s,$ we have 
$c^{-1}_N\leq {K^*_{s,N}(y)\over  (1+ | y|)^{1/\gamma -1}}\leq  c_N   .$}
  
  We use Lemma    \ref{lem_Poisson_kernel} in order to estimate $K^*_{s,N}(y).$
Since  this step is  much  easier than  Step 2 and Step 3, we leave it to the interested reader.

Putting   Step 1, 2, 3 and  4 altogether,  the proof of   Proposition \ref{P:key_interpretations}
is thereby completed.
\hfill $\square$

\subsection{End of the proof of
Proposition \ref{P:comparison_bis}.} We apply  what  has been done in this section 
to  $\B_{r^{1/N}}$  instead of   $    \B_{r^{1/N}|\log r|^{-3/N}}.$
Consequently, we obtain   quite similar estimates  as in Step 2, 3, 4 above.
This, combined with Lemma \ref{lem_estimate_G} for $r^{1/N}$ instead of $r$, yields a constant $c=c_N>0$ such  that 
$$
\| T\wedge [\Cf_{0,N}]\|_{\B_{r^{1/N}}}\leq c G(x_0,r^{1/N})\quad\text{for}\quad 0<r<1/2.
$$ 
Replacing $r^{1/N}$ by $r, $ the result follows. 
\hfill $\square$

\small

\medskip

\noindent
Vi{\^e}t-Anh Nguy{\^e}n,  
Universit\'e de Lille 1, 
Laboratoire de math\'ematiques Paul Painlev\'e, 
CNRS U.M.R. 8524,  
59655 Villeneuve d'Ascq Cedex, 
France.\\
{\tt Viet-Anh.Nguyen@math.univ-lille1.fr},
{\tt http://math.univ-lille1.fr/$\sim$vnguyen}


\begin{thebibliography}{99}

 
\bibitem{BerndtssonSibony}  
Berndtsson, Bo; Sibony, Nessim. The $\overline\partial$-equation on a positive current.
{\it Invent. Math.} {\bf 147} (2002), no. 2, 371-428. 
  



 
 \bibitem{Brunella}
Brunella, Marco.   Inexistence of invariant measures for generic rational differential equations in the complex domain,
{\it  Bol. Soc. Mat. Mexicana (3),}   {\bf 12}  (2006), no. 1, 43�49. 
 
\bibitem{Candel}
Candel, Alberto. Uniformization of surface laminations, {\it
  Ann. Sci. {\'E}cole Norm. Sup.} (4),  {\bf 26} (1993),  no. 4, 489-516.

\bibitem{Candel2}
-----. The harmonic measures of Lucy Garnett, {\it Adv. Math.},
{\bf 176} (2003),  no. 2, 187-247.

\bibitem{CandelConlon1}  Candel, Alberto; Conlon, Lawrence.  Foliations. I. Graduate Studies in Mathematics, {\bf 23}. {\it American Mathematical Society, Providence, RI,} 2000. xiv+402 pp. 
 

\bibitem{CandelConlon2}    
-----. Foliations. II. Graduate Studies in Mathematics, {\bf 60}. {\it American Mathematical Society, Providence, RI,} 2003. xiv+545 pp.

 

 

 
\bibitem{Chavel}   
  Chavel, Isaac.  Eigenvalues in Riemannian geometry. Including a chapter by Burton Randol. With an appendix by Jozef Dodziuk. Pure and Applied Mathematics, {\bf 115}. {\it Academic Press, Inc., Orlando, FL, 1984.} xiv+362 pp.






 
 
 \bibitem{Deroin05}  Deroin, Bertrand. Hypersurfaces Levi-plates immerg\'ees dans les surfaces complexes de courbure positive. (French) [Immersed Levi-flat hypersurfaces in complex surfaces of positive curvature]. {\it  Ann. Sci. \'Ecole Norm. Sup. (4)}  {\bf  38} (2005), no. 1, 57-75. 
 
 \bibitem{Deroin13}  -----. Brownian motion on foliated complex surfaces, Lyapunov exponents and applications.
 {\it Geometry and Foliations, 2013}, Komaba, Tokyo, Japan. 
 


\bibitem{DinhNguyenSibony1}
Dinh, T.-C.; Nguy\^en, V.-A.; Sibony, N.  Heat equation and ergodic theorems for Riemann surface laminations.
{\it Math. Ann.} {\bf 354}, (2012), no. 1,   331-376.

  

 \bibitem{DinhNguyenSibony2}
-----.  Entropy for hyperbolic Riemann surface laminations I.
{\it Frontiers in Complex Dynamics: a volume in honor of John Milnor's 80th birthday,} (A. Bonifant, M. Lyubich, S. Sutherland, editors), 569--592, (2014), Princeton University Press.

 \bibitem{DinhNguyenSibony3}
-----.  Entropy for hyperbolic Riemann surface laminations II.
{\it Frontiers in Complex Dynamics: a volume in honor of John Milnor's 80th birthday,} (A. Bonifant, M. Lyubich, S. Sutherland, editors),  593--622, (2014), Princeton University Press.

 
  \bibitem{DinhSibony15} Dinh, T.-C.;  Sibony, N. 
Unique ergodicity for foliations in $\P^2$ with an invariant curve.  {\it Invent. math.}, to appear.
{\tt DOI 10.1007/s00222-017-0744-2} 


    
 
\bibitem{FornaessSibony1}
 Forn\ae ss, John Erik; Sibony, Nessim.   Harmonic currents of finite
 energy and laminations. {\it Geom. Funct. Anal.}, {\bf  15} (2005), no. 5, 962-1003.
 
  
\bibitem{FornaessSibony2}
 -----.   Riemann surface laminations with singularities.
{\it J. Geom. Anal.}, {\bf 18} (2008), no. 2, 400-442.

 
 
 
\bibitem{FornaessSibony3}
 -----. Unique ergodicity of harmonic currents on singular foliations
 of $\P^2.$ {\it Geom. Funct. Anal.}, {\bf 19} (2010), no. 5,  1334-1377.
 
 

\bibitem{Garnett}
 Garnett, Lucy.  Foliations, the ergodic theorem and Brownian motion. {\it J. Funct. Anal.} {\bf  51} (1983), no. 3, 285-311.

  
\bibitem{Ghys}
  Ghys, \'Etienne.  Laminations par surfaces de Riemann. (French) [Laminations by Riemann surfaces] {\it Dynamique et g\'eom\'etrie complexes (Lyon, 1997),} ix, xi, 49-95, Panor. Synth\`eses, {\bf 8}, {\it Soc. Math. France, Paris,} 1999. 


\bibitem{Glutsyuk} 
Glutsyuk, A.A.  Hyperbolicity of the leaves of a generic one-dimensional holomorphic foliation on a nonsingular projective algebraic variety. (Russian)
 {\it Tr. Mat. Inst. Steklova}, {\bf 213} (1997), Differ. Uravn. s Veshchestv. i Kompleks. Vrem., 90-111; translation in {\it Proc. Steklov Inst. Math.} 1996, no. 2, {\bf 213}, 83-103. 

  
  \bibitem{Hurder}
   Hurder, Steven. Classifying foliations. {\it Foliations, geometry, and topology,} 1-65, Contemp. Math., {\bf 498}, {\it Amer. Math. Soc., Providence, RI,} 2009.	
   
    \bibitem{Hussenot} Hussenot, Nicolas.
On the dynamics of Riccati foliations with non parabolic monodromy representations.   
{\tt math.DS,  math.CV,} {\tt arXiv:1604.06689v2,} 28 pages.


     



\bibitem{Neto94} Lins Neto A.
 Simultaneous uniformization for the leaves of projective foliations by curves. {\it  Bol. Soc. Brasil. Mat. (N.S.),} {\bf  25} (1994), no. 2,  181-206. 

\bibitem{Neto}
-----.  Uniformization and the Poincar\'e metric on the leaves of a foliation by curves.
{\it  Bol. Soc. Brasil. Mat. (N.S.),}  {\bf 31} (2000), no. 3, 351-366. 

 

\bibitem{NetoSoares}
Lins Neto, A.; Soares M. G.
Algebraic solutions of one-dimensional foliations.
{\it J. Differential Geom.,}  {\bf 43} (1996), no. 3, 652-673. 
  


 \bibitem{NguyenVietAnh1}
 Nguy\^en, Vi\^et-Anh. 
  Oseledec   multiplicative ergodic theorem   for   laminations.
 {\it  Mem. Amer. Math. Soc.}  {\bf  246} (2017), no. 1164, ix+174 pp. 
 \bibitem{NguyenVietAnh2}
 -----.
 Geometric characterization of Lyapunov exponents for laminations.
  {\it J. Geom. Anal.},  to appear.  {\tt DOI: 10.1007/s12220-017-9919-8}
  
  \bibitem{NguyenVietAnh3}
 -----.
 Directed harmonic currents near hyperbolic singularities.  
{\it  Ergodic Theory Dynam. Systems,} to  appear.  
 {\tt DOI: https://doi.org/10.1017/etds.2017.2}
  
 
 
   
  
 
   
   \bibitem{Skoda}
  Skoda, Henri. Prolongement des courants, positifs, ferm\'es de masse finie. (French) [Extension of closed, positive currents of finite mass] {\it Invent. Math.} {\bf 66} (1982), no. 3, 361�376.   
   
\bibitem{Sullivan}   
  Sullivan, Dennis.  Cycles for the dynamical study of foliated manifolds and complex manifolds. {\it Invent. Math.} {\bf  36} (1976), 225-255. 
 
\bibitem{Walczak}
  Walczak, Pawel. Dynamics of foliations, groups and pseudogroups. Instytut Matematyczny Polskiej Akademii Nauk. Monografie Matematyczne (New Series) [Mathematics Institute of the Polish Academy of Sciences. Mathematical Monographs (New Series)], {\bf 64}. {\it Birkh{\"a}user Verlag, Basel,} 2004. xii+225 pp.
 

 \end{thebibliography}
\end{document}